\numberwithin{equation}{section}
\newtheorem{lemma}{Lemma}[section]
\newtheorem{theorem}{Theorem}
\newtheorem{coro}{Corollary}[section]
\newtheorem{prop}{Proposition}[section]
\newtheorem{assumption}{Assumption}
\newenvironment{thmstar}[1]
{%
   \addtocounter{theorem}{-1}%
   \begin{theorem}}
  {\end{theorem}}
\newcommand{\RR}{{\mathbb R}}
\newcommand{\ZZ}{{\mathbb Z}}
\newcommand{\NN}{{\mathbb N}}
\newcommand{\lm}{\lambda}
\newcommand{\ep}{\epsilon}
\newcommand{\ra}{\rightarrow}
\newcommand{\deq}{\stackrel{\rm d}{=}}
\newcommand{\qandq}{\quad\mbox{and }}
\newcommand{\qifq}{\quad\mbox{if }}
\newcommand{\qforq}{\quad\mbox{for }}
\newcommand{\qasq}{\quad\mbox{as ~}}
\newcommand{\qinq}{\quad\mbox{in }}
\newcommand{\qforallq}{\quad\mbox{for all }}
\def\tinf{\rightarrow\infty}
\def\Ra{\Rightarrow}
\newcommand{\bes}{\begin{equation*}}
\newcommand{\ees}{\end{equation*}}
\newcommand{\bequ}{\begin{equation}}
\newcommand{\eeq}{\end{equation}}
\newcommand{\bsplit}{\begin{split}}
\newcommand{\esplit}{\end{split}}
\newcommand{\bea}{\begin{eqnarray}}
\newcommand{\eea}{\end{eqnarray}}
\newcommand{\beas}{\begin{eqnarray*}}
\newcommand{\eeas}{\end{eqnarray*}}
\newcommand{\benum}{\begin{enumerate}}
\newcommand{\eenum}{\end{enumerate}}
\newcommand{\wt}{\widetilde}
\newcommand{\wh}{\widehat}
\newcommand{\UU}{\mathcal{U}}
\newcommand{\PP}{\mathcal{P}}
\newcommand{\D}{ D}
\begin{document}

\begin{frontmatter}
\title{Many-Server Heavy-Traffic Limits for Queueing Systems with Perfectly Correlated Service and Patience Times}
\runtitle{Systems with Perfectly Correlated Service and Patience}

\begin{aug}
  \author[A]{\fnms{Lun} \snm{Yu}\ead[label=e1]{lunyu2014@u.northwestern.edu}}\and
  \author[A]{\fnms{Ohad} \snm{Perry}\ead[label=e2]{ohad.perry@northwestern.edu}}
  \address[A]{Department of Industrial Engineering and Management Sciences,
  2145 Sheridan Rd. Evanston, IL 60208,\printead{e1,e2}}
\end{aug}

\begin{abstract}
We characterize heavy-traffic process and steady-state limits for systems staffed according to the square-root safety rule,
when the service requirements of the customers are perfectly correlated with their individual patience for waiting in queue.
Under the usual many-server diffusion scaling, we show that the system is asymptotically equivalent to a system with no abandonment.
In particular, the limit is the Halfin-Whitt diffusion for the $M/M/n$ queue
when the traffic intensity approaches its critical value $1$ from below,
and is otherwise a transient diffusion, despite the fact that the prelimit is positive recurrent.
To obtain a refined measure of the congestion due to the correlation,
we characterize a lower-order fluid (LOF) limit for the case in which the diffusion limit is transient,
demonstrating that the queue in this case scales like $n^{3/4}$.
Under both the diffusion and LOF scalings, we show that the stationary distributions converge weakly to the time-limiting behavior
of the corresponding process limit.
\end{abstract}

\begin{keyword}[class=MSC2020]
  \kwd[Primary ]{60F05, 60F17, 60K25}
  \kwd[; secondary ]{90B22}
\end{keyword}

\begin{keyword}
  \kwd{many-server queues}
  \kwd{square-root staffing}
  \kwd{correlated service and patience}
  \kwd{fluid limits}
  \kwd{diffusion limits}
\end{keyword}

\end{frontmatter}


\section{Introduction} \label{secIntro}

Service systems often experience abandonment due to customer impatience for waiting in queue.
The significant impacts that abandonment has on the queueing dynamics are clear from the fact that stability---
the most fundamental performance measure of a queueing system---is guaranteed to hold under weak regularity conditions on the system's primitives,
regardless of the value of the traffic intensity; see \cite[\S 4]{kang2012asymptotic}.
To model customer abandonment, it is typically assumed that the patience of the customers are independent and identically distributed (i.i.d.) random variables,
that are also independent of all other random variables and processes in the model.
However, 
it stands to reason that, in practice, the patience of customers depends on their individual service requirement, as was indeed
empirically demonstrated to be the case in contact centers \cite{reich2012offered} and restaurants \cite{de2017worth}.

A heuristic fluid model developed in \cite{wu2018service} suggests that positive dependence between the service and patience times of customers
have large impacts on steady-state performance measures, such as the expected steady-state queue length and waiting times, when the system
is overloaded (in the sense that the arrival rate exceeds the maximum service capacity).
However, in overloaded systems, practically all the customers are delayed in queue,
and their waiting times are, asymptotically (under fluid scaling), of the same order as the service time.
It is therefore not immediately clear whether the insights in
\cite{wu2018service} extend to systems that are not overloaded, so that a significant proportion of the customers are not delayed at all,
and the waiting times of those customers that are delayed are asymptotically negligible.

In this paper, we carry out asymptotic analysis in this latter setting, by considering systems that are staffed according to the square-root rule,
whose aim is to put the systems in the Halfin-Whitt limiting regime. This regime, which was first
characterized in the seminal paper \cite{halfin1981heavy} for the $M/M/n$ (Erlang-C) queue,
and was later extended in \cite{garnett2002designing} to the $M/M/n + M$ (Erlang-A) model,
which includes exponentially distributed customer patience,
is also known as the {\em quality-and-efficiency} (QED) regime, as it achieves both efficient utilization,
while simultaneously providing high quality of service.
In particular, under standard independence assumptions of the system's primitives,
the square-root staffing rule guarantees that almost all the service capacity is utilized at all times,
as is the case in the conventional heavy traffic regime, yet the probability that arrivals are delayed in queue is smaller than $1$ in the limit,
and waiting times of delayed customers are asymptotically negligible; see, e.g., \cite{van2019economies, whitt1992understanding}.
It is significant that the Erlang-A model operates in the QED regime even if the traffic intensity approaches $1$ from above, namely, if the
service capacity in the system is smaller than the demand for service by an $O(\sqrt{n})$ term. We elaborate in Section \ref{secGeneral} below.

{\em The Impact of the Correlation.}
When the service and patience times are positively correlated, one expects the system to be more congested than
when the two times are independent, because delayed customers that do not abandon tend to spend more time in service than a ``generic'' customer.
On the other hand, the waiting times and the proportion of abandonment in the QED regime are asymptotically negligible, and so the extent to which
correlation impacts the queueing dynamics is not {\it a priori} clear.

Our results show that, in the perfect-correlation case, abandonment has an asymptotically diminishing impact on the queues under diffusion scaling,
in that the system behaves much like a system that has no abandonment at all. Thus, unlike in the typical ``independent models''
(which assume that all the primitive processes are mutually independent), the diffusion limit can be transient, despite the fact that the prelimit is always stable.
The exact extent to which the correlation impacts congestion follows from limits for the queue process and for its steady-state distribution that
are achieved under an $n^{3/4}$ spatial scaling.

Specifically, we prove the following functional weak limit theorems.
The diffusion limit,
which is achieved under the usual many-server diffusion scaling (see Section \ref{secModel2}),
is the same limit that is obtained for the Erlang-C model under the square-root staffing rule.
Thus, if the traffic intensity approaches $1$ from below as $n \tinf$, then the diffusion limit
is the Halfin-Whitt diffusion in \cite{halfin1981heavy}. On the other hand, if the traffic intensity approaches $1$ from above,
then the limit is a transient diffusion, having a positive drift.
To obtain the exact order of congestion in the latter case, we derive a lower-order fluid (LOF) limit,
and a corresponding weak limit for the stationary distributions, both obtained under spatial scaling of $n^{3/4}$.
Given that the Erlang-A model operates in the QED regime under the square root staffing rule,
those latter limit theorems imply that the correlation causes an increase of order $O(n^{1/4})$ in congestion relative to the independent case.

{\em Implications.}
Even though perfect correlation between the service and patience times of customers is unlikely to be encountered in practice,
this case is worth studying because the limits we obtain for the queues are simple one-dimensional Markov processes that are easy to interpret,
despite the non-Markovian nature of the prelimit queue.
More general dependence structure will necessarily require complex (e.g., measured-valued) process descriptors,
which will in turn lead to more complex, infinite-dimensional limiting processes; see \cite{puha2019scheduling} for background.
On the other hand, the diminishing impact of the abandonment on the system's dynamics, and the resulting congestion,
are likely to hold in much greater generality than the special case we study. (In a similar vein to the QED regime,
which was initially developed for systems with exponentially distributed service times,
and was only later shown to hold in greater generality \cite{gamarnik2013steady, puhalskiiReiman2000multiclass, reed2009g}.)

We further remark that certain a martingale property, that is key to deriving measure-valued limits for a non-Markovian many-server queues with abandonment,
relies heavily on having the service and patience times be independent; see \cite[Proposition 5.1]{kang2010fluid}.
In the special case we consider, we circumvent this issue by employing an intricate representation of the state descriptors,
exploiting sub-martingale properties of certain two-parameter processes.
See the state descriptors in Section \ref{secMartRep} and Lemma \ref{Lem:FMartingales} below.


\subsection{Background and Related Literature}
Consider a sequence of systems, in which the $n$th element has a pool of $n$ statistically
homogeneous agents serving a single class of statistically homogeneous customers.
Let $\lm^n$ denote the arrival rate to system $n$ and $\mu$ denote the service rate of a customer (the latter does not scale with the system).
The square-root staffing rule stipulates that the number of agents and the arrival rate satisfy the relation $\lm^n = n\mu - O(\sqrt{n})$ as $n \tinf$,
while, simultaneously, $\lm^n/n \ra \lm$, for some $\lm  > 0$.
Equivalently, the square-root rule implies that, for some $\beta > 0$,
\bequ \label{Square root}
\lim_{n\tinf} \sqrt{n}(1-\rho^n) = \beta,
\eeq
where $\rho^n := \lm^n/(n\mu)$ is the traffic intensity to system $n$.

Now, consider the special case of Poisson arrivals and exponentially distributed service times, namely, the Erlang-C queue.
Let $X^n_C := \{X^n_C(t) : t \ge 0\}$ denote the number-in-system process, and let $\wh X^n_C := \{\wh X^n_C(t) : t \ge 0\}$ denote its diffusion-scale version;
\bes
\wh X^n_C(t) := n^{-1/2}(X^n_C(t) - n), \quad t \ge 0.
\ees
Let $\Ra$ denote convergence in distribution.
Theorem 2 in \cite{halfin1981heavy} states that, if \eqref{Square root} holds, and in addition
$\wh X^n_C(0) \Ra X_0$ in $\RR$, then $\wh X^n_C \Ra \wh X_C$ uniformly on compact intervals as $n\tinf$,
where $\wh X_C := \{\wh X_C(t) : t \ge 0\}$ is the unique strong solution (e.g., see \cite{revuz2013continuous}) to the Stochastic Differential Equation (SDE)
\bequ \label{HWdiff}
d \wh X_C(t) = m_C(\wh X_C(t)) dt + \sqrt{2 \mu} dB(t),\;\; \wh X_C(0)=X_0,
\eeq
for 
\bes
m_C(x) :=
\left\{\begin{array}{ll}
-\mu \beta & \text{if } x \ge 0; \\
-\mu (\beta + x) & \text{if } x < 0,
\end{array} \right.
\ees
and $B := \{B(t) : t \ge 0\}$ denoting a standard Brownian motion.

If in addition, customers are assumed to have finite patience that is exponentially distributed with mean $1/\theta$
that is independent of all other random variables in the model,
namely, if the Erlang-A queue is considered, then the square-root staffing rule can be generalized by allowing $\beta$ in \eqref{Square root} to be negative.
In particular, let $X_A^n(t)$ denote the number-in-system process in a system with abandonment, and let
$$\wh X_A(t) := n^{-1/2}(\wh X^n_A(t) - n), \quad t \ge 0.$$
Theorem 2 in \cite{garnett2002designing} proves that, if \eqref{Square root} holds with $\beta \in (-\infty, \infty)$, and in addition, $\wh X_A^n(0) \Ra X_0$ in $\RR$,
for some proper random variable $X_0$, then $\wh X^n_A \Ra \wh X_A$ uniformly over compacts as $n\tinf$, where
\bequ \label{GarnettDiff}
d \wh X_A(t) = m_A(\wh X_A(t)) dt + \sqrt{2 \mu} dB(t),\;\; \wh X_A(0)=X_0.
\eeq
Here, $B$ denotes a standard Brownian motion as before, and
\bes
m_A(x) :=
\left\{\begin{array}{ll}
-(\mu \beta + \theta x) & \text{if } x \ge 0; \\
-\mu (\beta + x) & \text{if } x < 0.
\end{array} \right.
\ees

We observe that both the diffusion limit in \eqref{HWdiff} and the limit in \eqref{GarnettDiff} imply that the stochastic fluctuations of $X^n_C$ and $X^n_A$ about $n$
(the number of agents) are of order $\sqrt{n}$, which we denote by $O_P(\sqrt{n})$.
Therefore, both the number of idle agents and the number of customers waiting in queue are $O_P(\sqrt{n})$ as well, as $n\tinf$.
Moreover, both diffusion processes achieve values in $\RR$, implying that a nonnegligible proportion of the customers do not wait at all,
while the waiting times of those customers who are delayed in queue are $O_P(n^{-1/2})$, and so are asymptotically negligible, as $n\tinf$.

\subsection{Notation} \label{secNotation}
All the random elements are defined on a complete probability space $(\Omega, \mathcal F, P)$; expectation with respect to $P$ is denoted by $E$.
We let $\RR$ and $\ZZ$ denote the sets of real numbers and integers, respectively, with $\RR_+ := [0,\infty)$ and $\ZZ_+ := \ZZ \cup \RR_+$.
For $k \in \NN$, we let $\RR^k$ denote the space of $k$-dimensional vectors with real components.
We let $D^k$ denote the space of right-continuous $\RR^k$-valued functions (on arbitrary finite time intervals) with limits everywhere,
endowed with the usual Skorokhod $J_1$ topology; see \cite{billingsley2009convergence}.
We let $D := D^1$ and $D_0:=\{x\in D:x(0)\ge0\}$.
We use $C^k$ (and $C := C^1$) to denote the subspace of $D^k$ of continuous functions, and $C_0:=D_0\cap C$.
It is well-known that the $J_1$ topology relativized to $C^k$ coincides with the uniform topology on $C^k$, which is induced by the norm
\begin{equation*}
||x||_t := \sup_{0 \leq u \leq t} \|x(u)\|,
\end{equation*}
where $||x||$ denotes the usual Euclidean norm of $x \in \RR^k$. We use $\eta : \RR \ra \RR$ for the identity map, i.e., $\eta(t)=t$ for $t\ge0$.

For a sequence of processes $\{Y^n : n \ge 1\}$ and a sequence of scalar $\{a^n : n \ge 1\}$,
we write (i) $Y^n=o_P(a^n)$, if for any $t \ge 0$ we have $\|Y^n/a^n\|_t\Ra0$ in $\RR$, as $n\tinf$;
(ii) $Y^n=O_P(a^n)$, if $Y^n$ is stochastically bounded, i.e., $\{\|Y^n/a^n\|_t:n\in\ZZ_+\}$ is a tight sequence in $\RR$ for any $t\ge 0$;
(iii) $Y^n = \Theta_p(a^n)$ if $Y^n=O_P(a^n)$ but not $o_P(a^n)$.
We write $\deq$ to denote equality in distribution, and $\le_{s.t.}$ to denote the usual stochastic order, namely, for two random variables $X$ and $Y$,
we write $X\le_{s.t.} Y$ if
$P(X>x)\le P(Y>x)$ for all $x \in \RR$. For a random variable with values in $ZZ_+$, and a sequence of random variables $\{X_i : i \ge 1\}$,
we define $\sum_{i=1}^{N} X_n := 0$ on the event $\{N = 0\}$.

We denote $x^+:=\max\{x,0\}$ and $x^-:=-\min\{x.0\}$ for $x\in\RR$.
For $x$, $y\in\RR$ we let $x\wedge y:=\min\{x,y\}$ and $x\vee y:=\max\{x,y\}$.
Moreover, we let the latter ``min'' and ``max'' operators $\wedge$ and $\vee$ have higher precedence than multiplication, so that $xy\wedge z=x(y\wedge z)$,
and in particular, $x+y\wedge z = x+(y\wedge z)$, for $x, y, z \in \RR$.

\subsection{Organization}
The rest of the paper is organized as follows: We introduce the model in Section \ref{secModel}.
The main results, namely, the diffusion and LOF limits, as well as the corresponding weak limits for the stationary distributions, appear in Section \ref{secMain}.
To simplify the exposition, we first introduce the stochastic-process limit theorems under a simplifying assumption on the initial conditions;
we weaken that assumption significantly in Section \ref{secGeneral}.
In Section \ref{secTransient} we provide a characterization of the system's dynamics that is key to establishing the main results,
whose proofs appear in Section \ref{secMainProof}.
Proofs of supporting results are given in Sections \ref{secAuxProof1}--\ref{SecProofsLemmas}.

\section{The Model} \label{secModel}

We consider a sequence of systems denoted by $M/M_{pc}/n + M_{pc}$, indexed by the number of agents $n$; the subscript `pc' is mnemonic for ``perfect correlation.''
Each of the systems along the sequence consists of a single service pool with statistically homogeneous agents,
and an infinite buffer in which customers wait for their service.
Customers arrive to system $n$ according to a Poisson process with rate $\lm^n$, where $\lm^n/n \ra \lm$ as $m \tinf$, for some $\lm > 0$.
A customer begins service with an agent immediately upon arrival, if an idle agent is available,
and otherwise, waits in the queue for his turn to enter service.
We assume that customers are served in accordance with the FIFO discipline, namely, in the order of arrival, and that each customer has finite patience for waiting in queue.
That is, if a customer runs out of patience before his turn to enter service, that customer abandons the queue without returning.
We further assume that the service requirement and the patience time of each customer are (marginally) exponentially distributed with respective means
$1/\mu$ and $1/\theta$, $\mu, \theta > 0$, and that these two exponential random variables are independent from the arrival process and from the
service and patience times of all other customers. Without loss of generality, we measure time in service-time units, taking $\mu = 1$.
We further assume that $n$, $\lm^n$ and $\mu$ are related via the limit \eqref{Square root} (so that $\lm = \mu =1$),
for some $\beta \in (-\infty, \infty)$.

Unlike the standard $M/M/n+M$ queue, we assume that the service requirement of a customer
is perfectly correlated with his patience. In particular, Let $(S, T)$ denote a random variable in $\RR^2$,
such that $T$ is exponentially distributed with mean $1/\theta$ and $S$ is exponentially distributed
with mean $1/\mu = 1$. The assumption that $S$ and $T$ are perfectly correlated implies that $T = S/\theta$ w.p.1.
We assume that the service requirement and patience of each customer is a draw from the joint distribution of $S$ and $T$,
independently of all other customers and of the arrival process.

Due to the assumed correlation, the service-time distribution of a {\em served customer} is different than the service-time distribution of a generic customer.
For $w \ge 0$, let $S'(w)$ denote a generic random variable distributed like the service time of a customer who waited $w$ time units in queue.
Utilizing the memoryless property of the exponential distribution, we have that
\begin{equation} \label{servTime}
  S'(w) \deq S_b+\theta w,
\end{equation}
where $S_b$ is an exponentially distributed random variable with mean $1$.
Thus, the service time of each customer can be thought of as having two independent phases: conditional on the waiting time of the customer being $w$,
phase $1$ takes $\theta w$ units of time, and Phase $2$ is distributed like $S_b$. In particular, the waiting time in queue completely determines
the length of phase $1$, so that, conditional on his waiting time, the service time of a customer is a shifted exponential random variable.

For $t \ge 0$ and $n \ge 1$, let $Z^n(t)$ denote the the number of customers in service at time $t$,
and let $Z^n_i(t)$ denote the number of customers in phase $i$ at time $t$, $i = 1,2$, so that
\begin{equation*}
Z^n=Z_1^n+Z_2^n.
\end{equation*}
We denote by $Q^n(t)$ the number of customers waiting in queue, and by $X^n(t)$ the total number of customers in the system at time $t$, so that
$X^n(t) := Z^n(t) + Q^n(t)$.

\subsection{Preliminary: Stationarity of the $M/M_{pc}/n+M_{pc}$ System}

\begin{theorem} \label{ThErgodicity}
$X^n$ possesses a unique steady-state distribution, which is also its limiting distribution, as $t\tinf$.
\end{theorem}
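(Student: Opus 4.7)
Because the residual service time of an in-service customer depends on the waiting time that customer experienced, via the phase-$1$/phase-$2$ decomposition in \eqref{servTime}, the process $X^n$ is not Markov on its own. My plan is to augment the state to a piecewise-deterministic Markov process (PDMP), establish its positive Harris recurrence, and then project onto the $X^n$-coordinate.

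A natural Markov descriptor is $Y^n(t) := (Q^n(t),\, Z_2^n(t),\, R^n(t))$, where $R^n(t)$ is the sorted vector of residual deterministic phase-$1$ durations of the $Z_1^n(t) = Z^n(t) - Z_2^n(t) \le n$ in-service customers still in phase $1$. Between jumps, the entries of $R^n$ decrease at unit rate; jumps correspond to Poisson arrivals (rate $\lambda^n$), phase-$2$ completions (rate $Z_2^n$), abandonments (rate $\theta Q^n$), and the deterministic phase-$1$-to-phase-$2$ transitions when an entry of $R^n$ hits zero. Standard PDMP theory then yields the strong Markov property on a locally compact state space.

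The first step is a pathwise stochastic bound on the queue: couple each arrival's patience with the $\exp(\theta)$ service-time clock of the same arrival in an auxiliary $M/M/\infty$ queue $\wt Q$ having arrival rate $\lambda^n$ and service rate $\theta$. Since every customer leaves $Q^n$ no later than their patience expires (either by abandoning or by entering service earlier), this coupling gives $Q^n(t) \le \wt Q(t)$ for all $t \ge 0$, so $X^n(t) \le n + \wt Q(t)$ is tight and bounded in distribution by $n + \mathrm{Poisson}(\lambda^n/\theta)$ in steady state. The second step is a Foster-Lyapunov drift inequality for $Y^n$: a candidate Lyapunov function is $V(y) = q^2 + z_2 + \sum_i r_i$, for which the PDMP generator should give $\mathcal{L} V(y) \le -\epsilon$ outside a compact set, with the dominant negative contributions being $-2\theta q^2$ from abandonment and $-z_1$ from the continuous drift of $\sum_i r_i$, while arrivals and the jump $\theta w_{HoQ}$ added to $\sum_i r_i$ when the head of queue enters service are absorbed using the coupling bound.

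Once the drift inequality and $\psi$-irreducibility with respect to the empty atom $y_0 = (0,0,\emptyset)$ are established---the latter because one can wait for the queue to empty (positive probability in bounded time by abandonments), then wait for the existing phase-$1$ residuals to expire (no new ones are created while the queue is empty, since arrivals with $w = 0$ enter phase $2$ directly), and finally wait for the remaining phase-$2$ customers to complete---the standard Meyn-Tweedie ergodic theorem yields existence and uniqueness of a stationary distribution for $Y^n$ together with total-variation convergence; projecting onto the $X^n$-coordinate yields the theorem. I expect the main obstacle to be the rigorous Foster-Lyapunov verification, because the positive jump $\theta w_{HoQ}$ depends on the age of the head of queue, a quantity not tracked in $Y^n$; this may require enriching $Y^n$ to include queued-customer ages (yielding a measure-valued PDMP), or alternatively replacing the Foster-Lyapunov step by a direct regenerative argument at the atom $y_0$ whose finite return-time expectation is read off the $\wt Q$ coupling.
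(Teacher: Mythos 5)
Your approach is considerably more elaborate than the paper's and, as you yourself flag, it is not complete: the descriptor $Y^n=(Q^n,Z_2^n,R^n)$ is \emph{not} Markov, because the size of the deterministic phase-$1$ residual $\theta w$ created when a head-of-queue customer enters service depends on that customer's age, which is not part of $Y^n$. Repairing this forces a measure-valued state (tracking queued-customer ages), at which point both the Foster--Lyapunov verification and the PDMP generator computation become substantially harder than the theorem merits. More importantly, your fallback plan---``a direct regenerative argument at the atom $y_0$ whose finite return-time expectation is read off the $\widetilde Q$ coupling''---has a genuine gap: your $\widetilde Q$ is an $M/M/\infty$ with service rate $\theta$ coupled only to the \emph{patience} clocks, so it dominates $Q^n$ but not the in-service population. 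When $\widetilde Q$ hits $0$, the $M/M_{pc}/n+M_{pc}$ system may still hold $n$ busy servers with arbitrarily large accumulated phase-$1$ residuals, and the time for the service pool to drain is exactly the quantity you have not bounded. Showing ``finally wait for the remaining phase-$2$ customers to complete'' gives $\psi$-irreducibility (condition on no arrivals), but it does not give a finite expected return time to $y_0$ without further argument.

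The paper's proof sidesteps all of this with a single, stronger coupling: the dominating $M/M/\infty$ system assigns to the $i$th arrival the service time $S_i+T_i$ (exponential with rate $\theta/(1+\theta)$ because $S_i=\theta T_i$), rather than $T_i$ alone. Since a customer's entire sojourn in the $M/M_{pc}/n+M_{pc}$ system is at most $S_i+T_i$ (if she abandons, it is $T_i$; if she is served after waiting $w<T_i$, it is $w+S_i\le T_i+S_i$), the full process $X^n$ is pathwise below the infinite-server occupancy, so emptying of the $M/M/\infty$ forces $X^n=0$. Combined with the observation that $X^n$ regenerates at the empty state (Poisson arrivals, i.i.d.\ service times for customers who never wait), the finite expected busy cycle of the ergodic $M/M/\infty$ chain immediately gives positive recurrent regeneration via \cite[Theorem 2.1(b)]{sigman1993review}, with no Markov augmentation or drift calculation needed. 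If you want to salvage your route, replace your $\widetilde Q$ coupling by the paper's $S_i+T_i$ coupling for the whole system; then the regenerative argument closes and the PDMP machinery becomes unnecessary.
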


\begin{proof}
  First note that, due to the arrival process being Poisson, and the fact that all customers entering service immediately upon arrival have i.i.d.\ exponential service times, $X^n$ is a regenerative process, with state $0$ being a regeneration point.
  By \cite[Theorem 2.1(b)]{sigman1993review}, we only need to demonstrate that $X^n$ is a positive recurrent regenerative process.
  We prove this result by bounding the sample paths of $X^n$ from above with a positive recurrent process via coupling the $M/M_{pc}/n+M_{pc}$ with
  an infinite-server queue.
To this end, we give the two systems the same initial number of customers, and the same Poisson arrival process,
letting the service time of each arrival to the infinite-server queue
be equal to the service plus patience time of the corresponding customer in the $M/M_{pc}/n+M_{pc}$ system.
In particular, with $(S_i, T_i)$ denoting the service-patience times bivariate corresponding to the the $i$th arrival to the $M/M_{pc}/n+M_{pc}$ system,
we take $S_i+T_i$ to be the service time of the same arrival to the infinite-server system. Note that $S_i+T_i$
is exponentially distributed with rate $\theta/(1+\theta)$ because $S_i = \theta T_i$.

If $X^n(0) = K > 0$, then we endow each ``initial'' customer $k$, $1 \le k \le K$, with a bivariate $(S_k, T_k)$,
such that $S_k$ is exponentially distributed with mean $1$, $T_k = S_k/\theta$,
and these $K$ bivariates are i.i.d. We let the remaining service time of each such customer $k$ in the infinite-server queue
be $S_k + T_k$ (so that it is exponentially distributed with rate $\theta(1 + \theta)^{-1}$), and the remaining service time in the $M/M_{pc}/n+M_{pc}$ system
be an arbitrary number that is no larger than $S_k$; the remaining time to abandon of customer $k$ that is waiting in
the $M/M_{pc}/n+M_{pc}$ queue is no larger than $T_k$.

Under the above construction, the infinite-server queue is an $M/M/\infty$ system, and in particular, a CTMC.
Since the time that a customer with patience $T$ and service requirement $S$ spends in the
$M/M_{pc}/n+M_{pc}$ is smaller than $S+T$ w.p.1, the $k$th ``initial customer'' and the $i$th arrival after time $0$
depart the $M/M_{pc}/n+M_{pc}$ system before they depart the infinite-server system,
implying that the sample path of the queue in the latter system is (weakly) larger than in the former w.p.1.
In turn, whenever the $M/M/\infty$ system is empty, so is the $M/M_{pc}/n+M_{pc}$ system.
Now, the $M/M/\infty$ queue is an ergodic continuous-time Markov chain, regardless of the values of the arrival and service rates, and so its expected
busy cycle is finite.
This immediately implies that the regenerative cycle length, namely, the time between two consecutive visits to the empty state, is finite in the $M/M_{pc}/n+M_{pc}$ system.
\end{proof}

Henceforth, we let $X^n(\infty)$ denote a random variable having the unique stationary (and limiting) distribution of the process $X^n$.

\section{Main Results} \label{secMain}
In this section we present the main results of the paper, namely the diffusion and LOF limit, and the corresponding weak limits for the stationary distributions.
Throughout, we assume that \eqref{Square root} holds; the specific range of values that $\beta$ achieves
is specified in the formal statements.

\subsection{Limit Theorems Under Diffusion Scaling} \label{secModel2}
The diffusion limit is achieved under the usual many-server diffusion scaling for the scaled number-in-system process
 $$\wh X^n := n^{-1/2}(X^n - n).$$
 We note that, since $X^n$ is not a Markov process, the value of $X^n(0)$ does not determine the law of $X^n$.
Nevertheless, we can characterize the dynamics of $X^n$ without resorting to infinite-dimensional (measure-valued) Markov representation
for a special class of natural initial conditions. In particular, we can consider the case in which
the system has started operating before time $0$, such that all of the customers at time $0$ are in service, and none of them experienced any wait before entering service.
(For example, the system can be initialized empty.) In this case, the
the remaining service times of all the customers in the system at time $0$ are i.i.d.\ exponentially distributed random variables with mean $1$.
We can slightly generalize this initial condition by allowing $X^n(0)$ to be larger than $n$,
but require that the waiting time of each customer in queue at time $0$ is equal to $0$.

To simplify the exposition,
we first state the stochastic-process limit theorems under the above assumption on the initial condition (see Assumption \ref{AssInitial} below).
However, we remark that we must consider much more general initial conditions in order to prove the limit theorems for the stationary distributions.
Thus, we substantially generalize Assumption \ref{AssInitial} in Section \ref{secGeneral} (see \eqref{Ass:Ia} and \eqref{Ass:Ib} there),
and prove the process limit theorems in the generalized setting.

Recall that $Z^n_1(0)$ is the number of customers in phase-$1$ service at time $0$.
Let $\ell^n_i=\{\ell^n_i(t):t\ge0\}$ be the elapsed waiting time of the $i$th customer in the queue at time $t$, $i \ge 1$,
where $\ell^n_i(t) := 0$ for $i> Q^n(t)$.
\begin{assumption}[initial condition] \label{AssInitial}
  $Z^n_1(0) = 0$ and $\sum_{i=1}^{Q^n(0)} \ell^n_i(0) = 0$ w.p.1.
\end{assumption}
In particular, the condition $Z^n_1(0)=0$ implies that the remaining service times of all the customers in service at time $0$ are exponentially distributed with mean $1$.

The following FCLT shows that, for large $n$, the $M/M_{pc}/n+M_{pc}$ system behaves much like the Erlang-C model.
We remark that the asymptotic relation between the two systems is more intricate than what the diffusion limit reveals,
as the LOF limit in Theorem \ref{thFWLLN} will show.

\begin{theorem}[Diffusion Limit] \label{ThFCLT}
Assume that \eqref{Square root} holds with $\beta \in \RR$.
If Assumption \ref{AssInitial} holds and, in addition, $\wh{X}^n(0)\Ra X_0$ in $\RR$,
then $\wh{X}^n \Ra \wh X_C$ in $D$ as $n\tinf$, for $\wh X_C$ in \eqref{HWdiff}.
\end{theorem}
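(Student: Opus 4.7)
The plan is to exploit the jump-martingale decomposition of $X^n$ and identify an asymptotic cancellation between the abandonment rate and the deterministic ``excess'' service induced by waiting, thereby reducing the limit dynamics to those of the Erlang-C queue. Write $X^n(t)=X^n(0)+A^n(t)-D^n(t)-L^n(t)$, where $A^n$ is Poisson with rate $\lm^n$, the service-completion process $D^n$ has compensator $\int_0^t Z_2^n(s)\,ds$ (only phase-2 customers are counted down at the unit rate $\mu=1$), and the abandonment process $L^n$ has compensator $\int_0^t\theta Q^n(s)\,ds$ (the marginal patience distribution is $\mathrm{Exp}(\theta)$, so the memoryless property yields abandonment rate $\theta$ per waiting customer). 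Using $Z_2^n=Z^n-Z_1^n$ together with $n-Z^n=\sqrt{n}\,\wh X^{n,-}$, $Q^n=\sqrt{n}\,\wh X^{n,+}$, and \eqref{Square root}, I would divide by $\sqrt n$ to obtain
\begin{equation*}
\wh X^n(t)=\wh X^n(0)+\int_0^t\bigl[-\beta+\wh X^{n,-}(s)\bigr]ds+\int_0^t\frac{Z_1^n(s)-\theta Q^n(s)}{\sqrt n}\,ds+\wh M^n(t)+o(1),
\end{equation*}
where $\wh M^n$ is the rescaled compensated martingale. The first integrand matches the Erlang-C drift $m_C$ in \eqref{HWdiff} exactly.

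The main obstacle, and the whole point of the perfect-correlation structure, is to prove the asymptotic balance $\|Z_1^n-\theta Q^n\|_t=o_P(\sqrt n)$ for each $t\ge 0$. The heuristic is that, in the QED regime, the FIFO head-of-line waiting time $V^n(t)$ is $O_P(n^{-1/2})$. A Little's-law-type argument on the FIFO queue yields $Q^n(t)\approx A^n(t)-A^n(t-V^n(t))\approx\lm^n V^n(t)$, up to an abandonment term of order $O_P(\theta\sqrt n\cdot n^{-1/2})=O_P(1)$ and a Poissonian fluctuation of order $O_P(\sqrt{\lm^n V^n(t)})=O_P(n^{1/4})$, both of which are $o_P(\sqrt n)$. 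On the other hand, the phase-1 duration of a customer entering service at time $s$ is the deterministic value $\theta V^n(s)$; under Assumption \ref{AssInitial} there are no legacy phase-1 customers at time $0$, so $Z_1^n(t)$ equals the number of entries into service during $[t-\theta V^n(t),t]$, an interval of length $O_P(n^{-1/2})$. Since entries into service occur at intensity essentially $Z_2^n(s)\approx n$ whenever the queue is positive, the same Poissonian fluctuation bound gives $Z_1^n(t)=\theta\lm^n V^n(t)+o_P(\sqrt n)=\theta Q^n(t)+o_P(\sqrt n)$. Promoting these pointwise-in-$t$ statements to uniform ones on $[0,t]$, jointly with enough regularity of $V^n$, is the delicate part; the two-parameter state representation and sub-martingale inequalities announced in Section \ref{secMartRep} and Lemma \ref{Lem:FMartingales} are tailored to exactly this task.

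Granted the balance, the remaining steps are routine. The predictable quadratic variation $\langle\wh M^n\rangle_t=n^{-1}\bigl[\lm^n t+\int_0^t Z_2^n(s)\,ds+\int_0^t\theta Q^n(s)\,ds\bigr]$ converges to $2t$ uniformly on compacts, since $\lm^n/n\to 1$, $Z_2^n/n\to 1$ in the QED regime, and $\theta Q^n/n=O_P(n^{-1/2})\to 0$; the Martingale FCLT then gives $\wh M^n\Ra\sqrt 2\,B$ for a standard Brownian motion $B$. Tightness of $\wh X^n$ follows from the decomposition together with the Lipschitz bound on $m_C$, and \eqref{HWdiff} admits a unique strong solution depending continuously on the initial data and the driving noise, so the Skorokhod representation theorem and the continuous mapping theorem deliver $\wh X^n\Ra\wh X_C$ in $D$.
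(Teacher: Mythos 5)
Your proposal takes essentially the same route as the paper: martingale/compensator decomposition of $X^n$, cancellation of the abandonment drift $-\theta Q^n$ against the excess phase-1 service, martingale FCLT with limiting quadratic variation $2\eta$, and the Pang--Whitt continuous map for the piecewise-linear drift. The heuristics you offer for the balance (via the FIFO head-of-line wait, which you call $V^n$; the paper writes this as $w^n$ and reserves $V^n$ for the remainder process in \eqref{Eq:V}, and $L^n$ for the cumulative-time process in \eqref{Ln} --- two notational collisions to watch) are the same ones that underlie the representation in Section~\ref{secMartRep}.

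One structural difference is worth flagging. You want the uniform pointwise bound $\|Z_1^n - \theta Q^n\|_t = o_P(\sqrt n)$, and you correctly identify the uniformity as the delicate step. The paper's regrouping in \eqref{Eq:sc}--\eqref{Eq:XD} sidesteps part of that difficulty: pointwise control is required only for $V^n$, $U_2^n$ and $M_R^n$, while the stochastic-integral fluctuation $U_1^n$ of $Z_1^n$ enters only through its \emph{time integral} $\int_0^{\cdot} U_1^n\,ds$. Integrating in time first lets one push the $w^n$-window onto $dM_S^n$ via Fubini (see \eqref{Eq:U1int}) and then bound the result by the predictable quadratic variation plus the pathwise estimate \eqref{EqU1Second}; a direct uniform bound on $\sup_{s\le t}|U_1^n(s)|$ is genuinely harder because $U_1^n(t)$ is not a martingale in $t$ and has no submartingale surrogate analogous to the one in Lemma \ref{Lem:FMartingales}. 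For $U_2^n$ the pointwise bound \emph{is} needed, and there Lemma~\ref{Lem:FMartingales} together with Doob's $L^6$ maximal inequality supplies it --- your anticipation of that lemma as the technical heart is exactly right. So the plan is sound, but carrying it out exactly as stated (pointwise balance) would saddle you with a harder uniformity problem for the $Z_1^n$ fluctuation than the paper actually solves.
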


It is well known that the solution to the SDE \eqref{HWdiff} has a unique steady-state distribution when $\beta>0$,
which is exponential on the positive real line, and normal on the negative real line; see Theorem 1 and Corollary 2 in \cite{halfin1981heavy}.
In particular, let $\wh X_C(\infty)$ denote a random variable with that steady-state distribution, and let
$\Phi$ denote the cumulative distribution function (cdf) of the standard normal random variable. Then
\begin{align}
  \label{Eq:XCInfty}
& P(\wh X_C(\infty)>x|\wh X_C(\infty)\ge0)=e^{-\beta x}, \\
& P(\wh X_C(\infty)\le x|\wh X_C(\infty)\le0)=\Phi(\beta+x)/\Phi(\beta),
\end{align}
where
\begin{equation}
\label{Eq:XCInfty2}
P(\wh X_C(\infty)\ge0)=[1+\sqrt{2\pi}\beta\Phi(\beta)e^{\beta^2/2}]^{-1}.
\end{equation}
On the other hand, when $\beta \le 0$, the diffusion process $\wh X_C$ is either null (when $\beta = 0$) or transient (when $\beta < 0$).
This follows easily from the fact that $\wh X_C$ is distributed like an ergodic Ornstein–Uhlenbeck process on $(-\infty, 0)$, and like
a Brownian motion on $(0,\infty)$, which is driftless in the case $\beta = 0$, and has a positive drift when $\beta < 0$.

We next characterize the limits of the stationary distributions of $\wh X^n$ for the two cases in which (i) the time-limiting behavior of $\wh X_C$ exists,
namely, when $\beta < 0$, and (ii) when $\beta > 0$.
To this end, we say that a sequence of random variables $Y^n$ converges in distribution to infinity, and write $Y^n \Ra \infty$, if
$P(Y^n > M) \ra 1$ as $n\tinf$ for any $M > 0$.
\begin{theorem} \label{ThStationary}
The following hold for the sequence $\{\wh X^n(\infty) : n \ge 1\}$ as $n\tinf$.

(i)  If $\beta>0$, then $\wh{X}^n(\infty)\Ra \wh X_C(\infty)$.

(ii) If $\beta < 0$, then $\wh{X}^n(\infty)\Ra \infty$.
\end{theorem}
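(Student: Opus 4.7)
The plan is to leverage the process-level diffusion limit in Theorem~\ref{ThFCLT} (extended in Section~\ref{secGeneral} to general initial conditions), together with the fact that if $\wh X^n$ is started from its stationary distribution $\pi^n$, then $\wh X^n$ is itself a stationary process, so any weak limit of $\wh X^n$ in $D[0,\infty)$ is also stationary. Identifying the prelimit stationary distribution then reduces to analyzing the stationary behavior of the SDE \eqref{HWdiff}: when $\beta>0$ this SDE has a unique stationary distribution $\wh X_C(\infty)$ (recalled in \eqref{Eq:XCInfty}--\eqref{Eq:XCInfty2}), whereas when $\beta<0$ the drift of $\wh X_C$ is strictly positive throughout $\RR$, so $\wh X_C(t)\to+\infty$ a.s.\ from every starting point and $\wh X_C$ admits no nonzero finite invariant measure.

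For part (i), I would first establish tightness of $\{\wh X^n(\infty):n\ge1\}$ in $\RR$. The upper-tail tightness uses that the effective diffusion-scale drift of $\wh X^n$ when $\wh X^n>0$ is $-\beta<0$, after accounting for the asymptotic cancellation between abandonments and Phase 1 delays; this drift is accessible formally through the two-parameter submartingale representation introduced in Section~\ref{secMartRep} and Lemma~\ref{Lem:FMartingales}, on which a Foster--Lyapunov argument can be mounted. Lower-tail tightness is easier, coming from the Erlang-C--type upward drift $-(\beta+\wh X^n)$ when $\wh X^n<0$. Given tightness, for any weak subsequential limit $X_*$ of $\wh X^n(\infty)$ along $\{n_k\}$, start $\wh X^{n_k}$ from $\pi^{n_k}$ and apply the generalized Theorem~\ref{ThFCLT} to obtain $\wh X^{n_k}\Ra\wh X_C$ in $D[0,\infty)$ with $\wh X_C(0)\deq X_*$. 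The limit $\wh X_C$ inherits stationarity from the prelimit, so $X_*$ must be a stationary distribution of \eqref{HWdiff}; uniqueness then forces $X_*\deq\wh X_C(\infty)$, and uniqueness of the subsequential limit yields the full convergence.

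For part (ii), I argue by contradiction. If $\wh X^n(\infty)\not\Ra\infty$, the lower-tail tightness (still valid for $\beta<0$ via the upward drift in $\{\wh X^n<0\}$) and Prokhorov give a subsequence $\{n_k\}$ along which $\wh X^{n_k}(\infty)\Ra\nu$ weakly on $[-\infty,\infty]$, with $\nu(\{-\infty\})=0$ and $\alpha:=\nu(\RR)>0$. The goal is to show that the finite measure $\nu|_{\RR}$ is $P_t$-invariant for the transition kernel $P_t$ of $\wh X_C$, contradicting the non-existence of nonzero finite invariant measures for a transient diffusion. Fix $t>0$ and a $\nu$-continuity bounded Borel set $C\subset\RR$. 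By stationarity, $P(\wh X^{n_k}(t)\in C)=P(\wh X^{n_k}(\infty)\in C)\to\nu(C)$. Decomposing this marginal according to $\{\wh X^{n_k}(0)\in[-K,K]\}$ and applying the generalized Theorem~\ref{ThFCLT} to the conditioned process yields
\begin{equation*}
\lim_{k\tinf} P(\wh X^{n_k}(t)\in C,\;\wh X^{n_k}(0)\in[-K,K])=\int_{[-K,K]} P_t(y,C)\,d\nu(y)
\end{equation*}
(along a further subsequence). The contribution from $\{\wh X^{n_k}(0)\notin[-K,K]\}$ is controlled by a uniform tail bound $\sup_{|y|>K}P_t(y,C)\to 0$ as $K\tinf$ (for $|y|$ large, the positive drift of $\wh X_C$ keeps it outside any bounded set at time $t$), plus a matching prelimit estimate from the process-level convergence. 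Letting $K\tinf$ gives $\nu(C)=\int_{\RR}P_t(y,C)\,d\nu|_{\RR}(y)$ for every such $C$ and $t$, identifying $\nu|_{\RR}$ as a nonzero $P_t$-invariant finite measure, the desired contradiction.

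The main technical obstacle in part (i) is the upper-tail tightness, because the drift cancellation between abandonments and Phase 1 delays is only asymptotic and must be quantified rigorously via the submartingale machinery of Lemma~\ref{Lem:FMartingales}; a second hurdle is verifying the generalized Theorem~\ref{ThFCLT} under the stationary initial distribution, for which Assumption~\ref{AssInitial} is violated. In part (ii), the delicate step is justifying the uniform prelimit tail bound used to close the truncation argument, which requires process-level convergence starting from initial conditions with $|\wh X^n(0)|$ arbitrarily large.
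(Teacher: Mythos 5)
Your strategy for Assertion (i) has the right architecture (tightness, stationary initialization, subsequential limit via the generalized process theorem, uniqueness of the invariant measure), and it matches the paper's overall plan. However, the tightness step you propose---a Foster--Lyapunov argument mounted on the two-parameter submartingales of Lemma~\ref{Lem:FMartingales}---is not how the paper proceeds and would be substantially harder to make rigorous, precisely because the drift cancellation you mention is only asymptotic and uniform-in-$n$ control is delicate. The paper instead obtains tightness by stochastic comparison (Lemma~\ref{LemCompTrick} and Lemma~\ref{LemCouplingL}): the $M/M_{pc}/n+M_{pc}$ steady state is sandwiched between the Erlang-A steady state (below) and the Erlang-C steady state (above), and explicit moment estimates from \cite{halfin1981heavy} and \cite{garnett2002designing} give $\limsup_n E|\wh X^n(\infty)|<\infty$ directly (Proposition~\ref{Prop:DiffusionTightness}). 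You also correctly flag the ``hurdle'' of applying the generalized Theorem~\ref{ThFCLT2} under stationary initialization; the paper closes this gap through Proposition~\ref{PropL}(b), which shows $E[L^n(\infty)]<M$ uniformly when $\beta>0$, hence $\wh L^n(0)\Ra0$.

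For Assertion (ii) you take a genuinely different---and, as it turns out, unnecessarily hard---route. The paper obtains (ii) essentially for free from Theorem~\ref{ThStationaryF}: since $\wt{X}^n(\infty)\Ra x^*=\sqrt{-2\beta}/\theta>0$ under the $n^{3/4}$ scaling, the identity $\wh X^n(\infty)=n^{1/4}\wt X^n(\infty)$ gives $\wh X^n(\infty)\Ra\infty$ immediately. Your alternative route (show the finite part of any subsequential weak limit on $[-\infty,\infty]$ would be a nonzero finite invariant measure of the transient diffusion) contains a genuine gap beyond the truncation/tail-bound issues you acknowledge: to apply Theorem~\ref{ThFCLT2} from the stationary initial distribution you need $\wh L^n(0)\Ra0$, but for $\beta<0$ Proposition~\ref{PropL}(a) only gives $E[L^n(\infty)]\le n^{1/2}M$, so Markov yields $P(L^n(\infty)>\epsilon n^{1/2})\le M/\epsilon$---which does not vanish. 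Thus the very hypothesis \eqref{Ass:Ia} that your argument relies on is not established in the regime $\beta<0$. This is precisely why the paper passes through the LOF scaling, where the weaker requirement $\wt L^n(0)=n^{-3/4}L^n(0)\Ra0$ does follow from Proposition~\ref{PropL}(a). In short: for (ii), you should not try to work at the $\sqrt n$ scale at all; prove the $n^{3/4}$-scale steady-state limit first (as the paper does) and deduce (ii) from it by a one-line scaling argument.
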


Given that the diffusion limit when $\beta \le 0$ is transient, it stands to reason that an analogous result to Assertion (ii) of Theorem \ref{ThStationary}
holds when $\beta = 0$; this can be proved in the special case $\theta < \mu$.
  \begin{prop}
    \label{PropDiffusion0}
Let $\beta=0$. If $\theta<1$, then $\wh{X}^n(\infty)\Ra \infty$ as $n\tinf$.
\end{prop}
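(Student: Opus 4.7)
The plan is a contradiction argument via interchange of limits, paralleling the proof of Theorem \ref{ThStationary}(ii).

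Suppose $\wh X^n(\infty) \not\Ra \infty$; then there exist $M > 0$, $\epsilon > 0$ and a subsequence $\{n_k\}$ along which $P(\wh X^{n_k}(\infty) \le M) \ge \epsilon$. After first establishing lower-tail tightness of $\{\wh X^n(\infty)\}$ (discussed below), I extract a further subsequence along which $\wh X^{n_k}(\infty) \Ra X^*$ for some proper $X^* \in \RR$. Initializing system $n_k$ at its stationary distribution gives $\wh X^{n_k}(t) \deq \wh X^{n_k}(\infty)$ for every $t \ge 0$. Applying the functional limit theorem under the generalized initial conditions of Section \ref{secGeneral} then yields $\wh X^{n_k} \Ra \wh X_C$ in $D$, where $\wh X_C$ is the Halfin--Whitt diffusion in \eqref{HWdiff} with $\beta = 0$ and $\wh X_C(0) \deq X^*$. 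Taking marginals at any fixed $t > 0$ and using uniqueness of weak limits gives $\wh X_C(t) \deq X^*$, so $X^*$ would be the law of a stationary probability distribution for $\wh X_C$. But $\wh X_C$ with $\beta = 0$ is null-recurrent: on $(0,\infty)$ the adjoint Fokker--Planck equation reduces to $\pi'' = 0$ (which admits only the trivial integrable solution), while any invariant law must charge every non-empty open set visited by the irreducible diffusion, a contradiction.

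For the lower-tail tightness, which is where the assumption $\theta < 1$ enters, I combine the throughput balance $\lm^n = E[Z^n_2(\infty)] + \theta E[Q^n(\infty)]$ (only phase-$2$ customers complete service, each at instantaneous rate $1$) with the decomposition $X^n = Q^n + Z^n_1 + Z^n_2$ to obtain the steady-state identity
\[
E[X^n(\infty)] = \lm^n + (1-\theta) E[Q^n(\infty)] + E[Z^n_1(\infty)].
\]
Since $\theta < 1$, both correction terms are nonnegative, so $E[\wh X^n(\infty)] \ge \sqrt{n}(\lm^n/n - 1) \to 0$. The complementary exponential control comes from a Foster--Lyapunov argument using $V(x) = e^{-\alpha(x-n)/\sqrt{n}}$: whenever $X^n(t) \le n - L\sqrt{n}$ there is no queue and the instantaneous drift of $X^n$ is at least $L\sqrt{n} - \beta\sqrt{n} \to L\sqrt{n}$, which renders the drift of $V(X^n)$ strictly negative for $L$ large and yields a uniform-in-$n$ bound on $E[V(X^n(\infty))]$, hence exponential decay of the lower tail.

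The principal obstacle will be verifying the hypotheses of the extended FCLT under stationary initialization: one must control, under the stationary law, both the elapsed waiting times of customers in queue and the elapsed phase-$1$ service times of customers in service at time $0$. The $M/M/\infty$ coupling from the proof of Theorem \ref{ThErgodicity}, combined with the exponential patience distribution, provides uniform tightness of these quantities in $n$.
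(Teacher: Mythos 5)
Your proposal takes a genuinely different route from the paper, and it has a gap that I do not see how to close. The central step is to extract a subsequence along which $\wh X^{n_k}(\infty) \Ra X^*$ for a \emph{proper} random variable $X^*\in\RR$, and then obtain a contradiction with the null recurrence of the Halfin--Whitt diffusion at $\beta=0$. But the negation of the conclusion only yields a subsequence with $P(\wh X^{n_k}(\infty) \le M) \ge \epsilon$, and this does \emph{not} give upper-tail tightness of $\{\wh X^{n_k}(\infty)\}$; indeed, the proposition asserts precisely that the upper tail escapes on the $\sqrt{n}$ scale. Lower-tail tightness (which your Foster--Lyapunov argument plausibly supplies) is not enough to extract a convergent subsequence in $\RR$: the sequence may place an atom of size $\epsilon$ near $M$ alongside escaping mass, so that every weak limit point along further subsequences is an improper $\overline\RR$-valued law charging $+\infty$, and Theorem \ref{ThFCLT2} needs a proper $X_0\in\RR$. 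This is exactly why the analogous interchange argument behind Theorem \ref{ThStationaryF} works there but not here: Proposition \ref{Prop:FluidTightness} supplies genuine two-sided tightness of $\{\wt X^n(\infty)\}$ at the $n^{3/4}$ scale, and no analogue is available on the $n^{1/2}$ scale when $\beta=0$. Relatedly, verifying \eqref{Ass:Ia} under stationary initialization cannot rest on Proposition \ref{PropL}, which for $\beta=0$ gives only $E[L^n(\infty)]=O(n^{1/2})$, hence $\wh L^n(0)=O_P(1)$ rather than the required $o_P(1)$; the $M/M/\infty$ coupling does not produce the sharper bound either.

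The paper instead uses a direct comparison, and this is also where $\theta<1$ actually enters. For each $\ep>0$ it considers an auxiliary $M/M_{pc}/n+M_{pc}$ system with a smaller arrival rate corresponding to $\beta^n_\ep\ge\beta^n$, $\beta^n_\ep\to\ep$, and with the \emph{larger} abandonment rate $\theta/(1-\theta)$. Lemma \ref{LemCompTrick} then gives $X^n(\infty)\ge_{s.t.}X^n_\ep(\infty)$; applying Theorem \ref{ThStationary}(i) to the $\epsilon$-system gives $\liminf_n P(\wh X^n(\infty)>M)\ge P(\wh X_C(\infty;\ep)>M)$ for every $M>0$; and letting $\ep\to0^+$ with \eqref{Eq:XCInfty}--\eqref{Eq:XCInfty2} finishes the argument. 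The hypothesis $\theta<1$ is needed solely so that $\theta/(1-\theta)$ is a finite, admissible abandonment rate in Lemma \ref{LemCompTrick}. By contrast, your balance identity $E[X^n(\infty)]=\lambda^n+(1-\theta)E[Q^n(\infty)]+E[Z^n_1(\infty)]$ is correct but only yields $\liminf_n E[\wh X^n(\infty)]\ge 0$, which does not force the upper tail to escape.
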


\subsection{Limit Theorems Under the LOF Scaling When $\beta \le 0$} \label{secFWLLN}

Theorem \ref{ThFCLT} shows a discrepancy between the diffusion limit and the prelimit when $\beta < 0$,
as the process $X^n$ is ergodic for all $n\ge 1$, while the diffusion limit $\wh X_C$ is transient.
Theorem \ref{ThStationary} further emphasizes this discrepancy
by showing that the weak limit of the stationary distributions $\{\wh X^n(\infty) : n \ge 1\}$ is infinite. In turn, this latter results implies
that a different spatial scaling, which must be larger than $\sqrt{n}$, is needed in order to achieve a non-trivial limit for $X^n(\infty)$.
The LOF stated below identifies the exact spatial scaling of the queue in this case to be $n^{3/4}$

We also observe that a non-trivial process-limit for $n^{-3/4}X^n$
requires a time scaling of $X^n$, because $n^{-3/4} X^n \Ra 0 \eta$ over any compact interval by Theorem \ref{ThFCLT}.
As we show below, the appropriate time scaling is $n^{1/4}$. Hence, we consider the process
\bes
\label{Eq:LLFScaling}
\wt{X}^n(t) := \frac{X^n(n^{1/4} t) - n}{ n^{3/4}}, \quad t \ge 0.
\ees
The next theorem characterize the weak limit of $\wt X^n$ as the unique solution to an initial-value problem (IVP), which is why we refer
to that limit as a fluid limit. (It is an LOF limit due to the spatial scaling, which is of lower order than the typical spatial scaling by $n$
that gives rise to functional weak laws.)
\begin{theorem}[LOF limit] \label{thFWLLN}
  \label{Thm:TransientF}
  Assume that \eqref{Square root} holds with $\beta \le 0$.
  If Assumption \ref{AssInitial} holds and in addition, $\wt{X}^n(0)\Ra x_0$ in $\RR$, where $x_0 \ge 0$ is deterministic,
then $\wt{X}^n\Ra x_F$ in $D$ as $n\tinf$, where $x_F$ is the unique solution to the IVP
  \begin{equation}
    \label{Eq:FODE}
    \dot x_F = -\beta-\frac{\theta^2}{2}x_F^2, \quad x_F(0) = x_0.
  \end{equation}
  In particular
   \begin{equation}
    \label{Eq:FXSolu}
     x_F(t)=\frac{\sqrt{-2\beta}}{\theta}\frac{(\sqrt{-2\beta}+\theta x_0)(1-e^{-\sqrt{-2\beta}\theta t})+2\theta x_0e^{-\sqrt{-2\beta}\theta t}}
    {(\sqrt{-2\beta}+\theta x_0)(1-e^{-\sqrt{-2\beta}\theta t})+2\sqrt{-2\beta}e^{-\sqrt{-2\beta}\theta t}}, \quad  \text{when $\beta < 0$},
  \end{equation}
  and
  \begin{equation}
    \label{Eq:FXSolu2}
    x_F(t)=\frac{2x_F(0)}{2+\theta^2 x_F(0) t}, \quad \text{when $\beta = 0$}.
  \end{equation}
\end{theorem}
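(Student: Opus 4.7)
The plan is to combine a flow-balance representation of $X^n$ with a quasi-steady-state analysis of $Z_1^n$. Decomposing $X^n = Q^n + Z_1^n + Z_2^n$ and compensating the three counting processes driving $X^n$ (Poisson arrivals at rate $\lambda^n$, phase-2 service completions at rate $Z_2^n$, and abandonments at rate $\theta Q^n$), one obtains, for each $t\ge 0$,
\begin{equation*}
X^n(t) - X^n(0) = M^n(t) + (\lambda^n - n)t + \int_0^t (n - Z^n(s))\, ds + \int_0^t (Z_1^n(s) - \theta Q^n(s))\, ds,
\end{equation*}
where $M^n$ is the sum of the three compensated martingales and I have used $Z_2^n = Z^n - Z_1^n$. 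Under the LOF scaling the first three contributions are handled routinely: the three martingales each have predictable quadratic variation of order $n \cdot n^{1/4} = n^{5/4}$ at rescaled time $t$, so Doob's $L^2$ inequality gives $n^{-3/4}\sup_{s\le t}|M^n(n^{1/4}s)| = O_P(n^{-1/8})$; the arrival-capacity term contributes $(\lambda^n - n)n^{-1/2}t \to -\beta t$ by \eqref{Square root}; and the idleness integral $n^{-3/4}\int_0^{n^{1/4}t}(n - Z^n(s))\, ds$ vanishes because under Assumption~\ref{AssInitial} and $\beta\le 0$ the process $X^n$ is bounded below (by a coupling argument analogous to the one used in Theorem~\ref{ThErgodicity}) by an Erlang-C queue whose diffusion limit, by Theorem~\ref{ThFCLT}, only dips below $n$ on an $o_P(n^{1/4})$ time interval.

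The crux of the argument is to establish
\begin{equation*}
\frac{1}{n^{3/4}} \int_0^{n^{1/4}t}(Z_1^n(s) - \theta Q^n(s))\, ds \;\Ra\; -\int_0^t \frac{\theta^2}{2}\, x_F(s)^2\, ds.
\end{equation*}
Heuristically, a queued customer whose potential waiting time is $w$ enters service with probability $e^{-\theta w}$ (by the memoryless patience of rate $\theta$) and then spends exactly $\theta w$ units in phase~1. Applying Little's law to the queue gives $Q^n = \lambda^n(1-e^{-\theta w})/\theta$, and to phase~1 gives $Z_1^n = \lambda^n\theta w\, e^{-\theta w}$, whence $Z_1^n - \theta Q^n = \lambda^n\bigl[(1+\theta w)e^{-\theta w}-1\bigr] \approx -\lambda^n(\theta w)^2/2 \approx -\theta^2(Q^n)^2/(2\lambda^n)$. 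This difference is of order $n^{1/2}$ in the LOF regime (since $Q^n$ is of order $n^{3/4}$ and $\lambda^n$ of order $n$), so integrating over $[0, n^{1/4}t]$ and dividing by $n^{3/4}$ produces exactly the displayed $\frac{\theta^2}{2} x_F^2$ term after replacing $Q^n(n^{1/4}v)/n^{3/4}$ by its limit $x_F(v)$ via continuous mapping.

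Making the quasi-steady-state identity rigorous as a pathwise estimate, rather than merely an equilibrium relation, is the main obstacle. I would use the two-parameter process machinery referenced in Section~\ref{secMartRep}: for $y\ge 0$, let $F^n_1(t,y)$ count customers currently in phase~1 at time $t$ whose elapsed phase-1 time is at most $y$, and let $F^n_Q(t,y)$ count queued customers whose elapsed wait is at most $y$. The deterministic length $\theta W$ of phase~1 coupled with the memoryless rate-$\theta$ abandonment in queue yields sub-martingale decompositions for both $F^n_1$ and $F^n_Q$ (Lemma~\ref{Lem:FMartingales}); evaluating the second argument at the elapsed wait of the head-of-queue customer couples $Z_1^n$ to $Q^n$, and bounding the compensators in $L^2$ by $O(\sqrt{n})$ on the LOF time scale yields the uniform estimate
\begin{equation*}
\sup_{0\le s \le n^{1/4}t}\Bigl| Z_1^n(s) - \theta Q^n(s) + \frac{\theta^2 (Q^n(s))^2}{2\lambda^n} \Bigr| = o_P(n^{1/2}),
\end{equation*}
which is precisely the refinement needed.

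Putting the pieces together, any subsequential limit $\wt X$ of $\wt X^n$ (obtained by a standard tightness argument based on stochastic boundedness of $\wt X^n$ and a modulus-of-continuity estimate on the compensator) satisfies $\wt X(t) = x_0 - \beta t - \int_0^t \frac{\theta^2}{2}(\wt X(s)^+)^2\, ds$ for $t\ge 0$. Since $x_0 \ge 0$ and the drift at $0$ equals $-\beta \ge 0$, $\wt X$ stays nonnegative and hence solves the IVP \eqref{Eq:FODE}. The right-hand side of \eqref{Eq:FODE} is locally Lipschitz on compacts, and solutions are uniformly bounded (trapped between $0$ and the stable equilibrium $\sqrt{-2\beta}/\theta$ when $\beta<0$, monotonically nonincreasing when $\beta=0$), so the IVP has a unique global solution; separation of variables delivers the closed forms \eqref{Eq:FXSolu}--\eqref{Eq:FXSolu2}. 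Uniqueness of the limit upgrades subsequential convergence to full weak convergence in $D$.
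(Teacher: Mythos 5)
Your overall strategy is sound and closely parallels the paper's actual argument: decompose $X^n$ via the martingale representation, identify the flow imbalance $Z_1^n - \theta Q^n$ as the source of the $-\tfrac{\theta^2}{2}x_F^2$ drift through a Little's-law heuristic, and close by uniqueness of the ODE. This is essentially what the paper does via \eqref{Eq:sc} and the process $V^n$. However, three steps have genuine gaps.

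\textbf{Idleness term.} You dispose of $n^{-3/4}\int_0^{n^{1/4}t}(n - Z^n(s))ds$ by claiming $X^n$ is bounded \emph{below} by an Erlang-C queue. This coupling direction is not available: the $M/M_{pc}/n+M_{pc}$ system has abandonment (which \emph{reduces} $X^n$), and the only pathwise coupling the paper establishes (in the proof of Lemma \ref{Lem:SBQ}) is an \emph{upper} bound $X^n(t)\le X^n_U(t)$ by an infinite-patience system. Lemmas \ref{LemCompTrick} and \ref{LemCouplingL} give stochastic comparisons only in stationarity, not pathwise, and the Erlang-A lower bound $X^n_A$ is not a lower bound by Erlang-C and has a bounded diffusion limit that spends $\Theta_P(n^{1/4})$ time below $n$ over $[0,n^{1/4}t]$. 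Even if some pathwise lower bound existed, inferring a $o_P(n^{1/4})$ occupation-time bound from Theorem \ref{ThFCLT} is not immediate, because that FCLT gives convergence only on compact intervals while here the horizon is $n^{1/4}t$. The paper instead proves that $\wt X^n \wedge 0 \Ra 0\eta$ directly, via a pathwise drift argument (Lemma \ref{Lem:Skorohod}) feeding into the Skorohod problem \eqref{Eq:Skorohod}, and then shows the reflection term vanishes. Your proposal omits this reflection step, and the scaled idleness integral $-n^{1/4}\int_0^\cdot \wt X^n \wedge 0\,ds$ is precisely the term that would diverge if $\wt X^n$ made even $O(1)$ negative excursions.

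\textbf{Uniform estimate on $Z_1^n-\theta Q^n$.} Your claimed estimate
\begin{equation*}
\sup_{0\le s \le n^{1/4}t}\Bigl| Z_1^n(s) - \theta Q^n(s) + \frac{\theta^2 (Q^n(s))^2}{2\lambda^n} \Bigr| = o_P(n^{1/2})
\end{equation*}
is asserted but not proved, and it is strictly stronger than what the paper establishes. The paper controls $U_1^n$ only in time-integrated form (Proposition \ref{Prop:F}(c): $n^{1/4}\int_0^\cdot\wt U_1^n\,ds\Ra 0\eta$, not $n^{1/4}\|\wt U_1^n\|_t\Ra 0$), and the quadratic correction emerges from the careful expansion of the two-sided truncated integral in $V^n$ (Lemma \ref{Lem:V}), where the boundary contribution near the time horizon has to be estimated. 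Proposing new two-parameter processes $F^n_1, F^n_Q$ is a reasonable direction, but you would still have to replicate the (nontrivial) submartingale and $L^6$-moment machinery that the paper develops for $F^n(s,t)$ in Lemma \ref{Lem:FMartingales} and Proposition \ref{Prop:F}(d).

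\textbf{Tightness.} Describing the tightness of $\wt X^n$ as "standard" undersells the difficulty. The paper's Proposition \ref{Prop:Tightness} must cope with reflection: the modulus-of-continuity bound for $\wt Q^n$ is obtained by separating excursions of $\wt Q^n$ above zero from visits to zero, and the compensator estimate relies on the stochastic boundedness in $D$ of $\wt w^n$ and $\wt Z^n_1-\wt Z^n_0$ (Lemmas \ref{Lem:SBw} and \ref{Lem:SBXZ}), which themselves rest on the martingale estimates you have not yet established.
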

A point $a \in \RR_+$ is a stationary point of $x_F$ if $x_F(t) = a$ for all $t \ge 0$ whenever $x_F(0) = a$;
it is globally asymptotically stable (and then also the unique stationary point) if $x_F(t) \ra a$ as $t\tinf$, for any solution $x_F$ to \eqref{Eq:FODE}.
Let
\bequ \label{x*}
x^*:=\sqrt{-2\beta}/\theta
\eeq
The following corollary follows immediately from Theorem \ref{thFWLLN}.
\begin{coro}[Stability of the IVP]
  \label{Cor:xstar}
$x^*$ is a globally asymptotically stable stationary point of \eqref{Eq:FODE}.
\end{coro}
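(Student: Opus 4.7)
The plan is to exploit the explicit formulas \eqref{Eq:FXSolu} and \eqref{Eq:FXSolu2} given in Theorem~\ref{thFWLLN}, so the argument reduces to two short calculus steps.

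First, I would verify that $x^{*}$ is a stationary point by direct substitution into the right-hand side of \eqref{Eq:FODE}:
\begin{equation*}
-\beta-\frac{\theta^{2}}{2}(x^{*})^{2}=-\beta-\frac{\theta^{2}}{2}\cdot\frac{-2\beta}{\theta^{2}}=0,
\end{equation*}
so the constant function $x_F \equiv x^{*}$ solves the IVP with $x_F(0)=x^{*}$, and by uniqueness of the solution (which is implicit in the statement of Theorem~\ref{thFWLLN}) it is the only such solution.

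Next, to establish global asymptotic stability when $\beta<0$, I would pass to the limit $t \tinf$ in \eqref{Eq:FXSolu}. Since $\sqrt{-2\beta}\,\theta>0$, we have $e^{-\sqrt{-2\beta}\,\theta t}\tzer$, so the numerator and denominator of the fraction in \eqref{Eq:FXSolu} both converge to $\sqrt{-2\beta}+\theta x_{0}$ (which is strictly positive for every $x_{0}\ge0$), giving $x_F(t)\to\sqrt{-2\beta}/\theta=x^{*}$. For $\beta=0$, $x^{*}=0$, and \eqref{Eq:FXSolu2} yields $x_F(t)=2x_{0}/(2+\theta^{2}x_{0}t)\tzer$ for every $x_{0}\ge0$ (with the convention that $x_F\equiv0$ if $x_{0}=0$). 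Combined with the stationarity verified in the first step, this is exactly global asymptotic stability.

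There is essentially no obstacle; the corollary is a direct consequence of the closed-form solutions derived in Theorem~\ref{thFWLLN}. If one preferred an approach that did not rely on the explicit formulas, a phase-line argument would also work: write $f(y):=-\beta-(\theta^{2}/2)y^{2}$, observe that $f$ is a downward-opening parabola with unique nonnegative root $x^{*}$, so that $f(y)>0$ for $0\le y<x^{*}$ and $f(y)<0$ for $y>x^{*}$. Consequently any trajectory with $x_F(0)\ne x^{*}$ is monotone and bounded (between $\min\{x_{0},x^{*}\}$ and $\max\{x_{0},x^{*}\}$), hence converges to a finite limit that must satisfy $f(\cdot)=0$, which forces the limit to be $x^{*}$.
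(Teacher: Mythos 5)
Your proof is correct and follows the same route the paper intends: the paper explicitly states that Corollary~\ref{Cor:xstar} "follows immediately from Theorem~\ref{thFWLLN}" via the closed-form solutions \eqref{Eq:FXSolu} and \eqref{Eq:FXSolu2}, and the paper's subsequent remark even mentions the very phase-line alternative you describe at the end. No differences to note.
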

In fact, any solution $x_F$ to \eqref{Eq:FODE} approaches $x^*$ monotonically, as can be seen from \eqref{Eq:FXSolu} and \eqref{Eq:FXSolu2},
or alternatively, from the fact that $\dot x < 0$ for all $x > x^*$ and $\dot x > 0$ for all $x < x^*$ (the latter being relevant only when $\beta < 0$).

Analogously to Theorem \ref{ThStationary}, we can prove that $x^*$ is the weak limit for the stationary random variables
$$\wt{X}^n(\infty):=\frac{X^n(\infty)-n}{ n^{3/4}}.$$
\begin{theorem}
  \label{ThStationaryF}
  If $\beta\le0$, then $\wt{X}^n(\infty)\Ra x^*$ in $\RR$ as $n\tinf$.
\end{theorem}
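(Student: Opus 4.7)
The plan is to employ a standard interchange-of-limits strategy: first establish tightness of $\{\wt{X}^n(\infty) : n \ge 1\}$ at the correct $n^{3/4}$ scale, and then identify every subsequential weak limit via the LOF process-limit theorem combined with the global asymptotic stability of the limiting IVP. Assuming tightness, every subsequence admits a further subsequence $\{n_k\}$ along which $\wt{X}^{n_k}(\infty) \Ra Y$ for some random variable $Y$ in $\RR$, and the goal is to conclude that $Y = x^*$ almost surely, from which $\wt{X}^n(\infty) \Ra x^*$ follows.

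For the identification step, I would initialize $X^{n_k}$ in its stationary distribution, so that $\wt{X}^{n_k}(0) \deq \wt{X}^{n_k}(\infty) \Ra Y$. Since the generalized version of Theorem \ref{thFWLLN} stated in Section \ref{secGeneral} covers initial conditions of this type (precisely the reason for relaxing Assumption \ref{AssInitial} there), the LOF process limit yields $\wt{X}^{n_k}(\cdot) \Ra x_F(\cdot; Y)$ in $D$, where $x_F(\cdot; y)$ denotes the solution to \eqref{Eq:FODE} with initial value $y$. Evaluating at an arbitrary fixed $t \ge 0$ gives $\wt{X}^{n_k}(t) \Ra x_F(t; Y)$ in $\RR$, while stationarity gives $\wt{X}^{n_k}(t) \deq \wt{X}^{n_k}(\infty) \Ra Y$. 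Hence $x_F(t; Y) \deq Y$ for every $t \ge 0$. Letting $t \tinf$ and invoking Corollary \ref{Cor:xstar}, $x_F(t; Y) \to x^*$ almost surely, so $Y \deq x^*$, and since $x^*$ is deterministic, $Y = x^*$ a.s.

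The main obstacle is establishing tightness of $\{\wt{X}^n(\infty)\}$ at the precise $n^{3/4}$ scale. The lower tail is trivial because $X^n(\infty) \ge 0$ forces $\wt{X}^n(\infty) \ge -n^{1/4} \to 0$, so any subsequential limit satisfies $Y \ge 0$ a.s. For the upper tail, the $M/M/\infty$ coupling used in Theorem \ref{ThErgodicity} is far too coarse, yielding only $X^n(\infty) - n = O_P(n)$, which is much larger than $n^{3/4}$. A finer argument is required, most naturally through a Lyapunov drift condition that exploits the strongly negative drift $-\beta - (\theta^2/2) x^2$ in the IVP \eqref{Eq:FODE} for large $x$. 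A natural candidate is $V(x) := ((x - n)^+)^2 / n^{3/2}$, whose expected change under the prelimit dynamics should be negative outside the set of states with $X^n - n$ of order $n^{3/4}$, delivering a uniform bound $E[V(X^n(\infty))] = O(1)$ and hence tightness by Markov's inequality. Because $X^n$ itself is not Markovian, this drift computation will have to be carried out via the two-parameter martingale representation introduced in Section \ref{secMartRep} and Lemma \ref{Lem:FMartingales}, which is the most delicate part of the argument.
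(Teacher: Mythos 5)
Your identification step is essentially the paper's: initialize stationarily, pass to a convergent subsequence, apply the generalized LOF theorem, and use global asymptotic stability of $x^*$ to pin down the limit. (The paper argues via strict monotone contraction toward $x^*$ to contradict stationarity, while you let $t \tinf$ directly; both are valid.) However, there are two gaps.

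First, and more seriously, you offload tightness of $\{\wt{X}^n(\infty)\}$ to an unproved Lyapunov drift argument and acknowledge it is the ``most delicate part.'' This is indeed the crux, and your proposed route is not the one the paper takes, nor is it clear it can be made to work: the process $X^n$ is not Markov, so a Foster--Lyapunov criterion with test function $V(x)=((x-n)^+)^2/n^{3/2}$ has no directly applicable generator, and the two-parameter martingale machinery of Section \ref{secMartRep} would have to be rebuilt specifically for the stationary moment estimate. The paper instead proves Proposition \ref{Prop:FluidTightness} by a genuinely different argument that exploits steady-state structure rather than dynamics: it introduces the offered waiting time $w^n_v(\infty)$, uses PASTA and a workload-conservation identity to obtain
\[
1 - E\bigl[f(\theta w^n_v(\infty))\bigr] = O(n^{-1/2}), \qquad f(x) := e^{-x}(1+x),
\]
(itself resting on the stochastic lower bound via the Erlang-A coupling of Lemma \ref{LemCouplingL} and the known abandonment-probability asymptotics), and then concavity of $f$ plus Jensen's inequality convert this into $E[Q^n(\infty)] = O(n^{3/4})$. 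That chain of reasoning has no analogue in your sketch.

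Second, to invoke Theorem \ref{ThLLF2} along the subsequence you must verify condition \eqref{Ass:Ib} in full, which requires not only $\wt{X}^{n_k}(0)\Ra Y$ with $Y\ge 0$ (which you handle) but also $\wt{L}^{n_k}(0)\Ra 0$. You assert the generalized theorem ``covers initial conditions of this type'' but never check the $L^n$ part; the paper establishes it by taking $L^{n_k}(0)\deq L^{n_k}(\infty)$ and appealing to Proposition \ref{PropL}(a), which gives $E[L^n(\infty)]\le M n^{1/2} = o(n^{3/4})$. Without that bound the application of the LOF theorem is not justified. Note that Proposition \ref{PropL} is itself nontrivial and also rests on the same $H=\lambda G$ / PASTA machinery used for tightness, so the steady-state structural arguments are doing essentially all the work that your sketch leaves open.
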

Let $Q^n(\infty)$ denote a random variable with the steady state distribution of the queue process; $Q^n(\infty) = (X^n(\infty) - n)^+$.
  Since $x^* > 0$ when $\beta < 0$, Theorem \ref{ThStationaryF} implies that $Q^n(\infty)$ is $\Theta_P(n^{3/4})$.

In ending, we remark that the time scaling in $\wt X^n$ implies that the {\em relaxation time} of $X^n$, namely, the time it takes it to converge to its steady state,
is increasing without bound in $n$ when $\beta \le 0$.

\subsection{Generalizing the Initial Condition} \label{secGeneral}
The process-limit results in Theorems \ref{ThFCLT} and \ref{Thm:TransientF} are both achieved under Assumption \ref{AssInitial}.
However, to prove the limit theorems for the stationary distributions, we need to allow for more general initial conditions.
(In particular, Theorems \ref{ThStationary} and \ref{ThStationaryF} will be proved by initializing the corresponding processes according to their stationary
distribution.)
To this end, we consider initial conditions in which the service time of each customer that is in service at time $0$ has two phases:
phase-$1$ (corresponding to the delay that customer experienced in queue), and phase-$2$, which is exponentially distributed with mean $1$.

Clearly, if the remaining phase-$1$ service time of sufficiently many customers that are initially in the system is sufficiently large
(for example, if we initialize all the customers with at least $c$ time units of remaining phase-$1$ service time, for some $c > 0$), then the system
will be temporarily overloaded, in the sense that its total service capacity will be smaller than the arrival rate
over an initial period. Then over that initial overload period, the order of size of the queue would be larger than the order of the spatial scalings
in Theorems \ref{ThFCLT} and \ref{Thm:TransientF}.
Hence, to generalize the assumption on the initial condition in Theorem \ref{ThFCLT} and Theorem \ref{Thm:TransientF},
we must enforce regularity conditions that prohibit such overload incidents.


Assume that the customers in phase-$1$ service are numbered according to the order in which they entered service,
and let $r^n_j(t)$ be the remaining service time of customer $j$ at time $t$, $1 \le j \le Z^n_1(t)$, for all $t \in \{s : Z^n_1(s) > 0\}$.
Recall also that $\ell^n_i(t)$ is the elapsed waiting time of the $i$th customer in the queue at time $t$.
Let
\begin{equation} \label{Ln}
  L^n(t):=\sum_{j=1}^{Z^n_1(t)}r^n_j(t)+\sum^{Q^n(t)}_{i=1}\ell^n_i(t), \quad t \ge 0.
\end{equation}

\begin{prop}
  \label{PropSL}
$L^n$ possesses a unique stationary distribution, which is also the limiting distribution of $L^n(t)$ as $t\tinf$.
\end{prop}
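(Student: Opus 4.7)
The plan is to mimic the proof of Theorem \ref{ThErgodicity} by exploiting the fact that $L^n$ inherits the regenerative structure of the $M/M_{pc}/n+M_{pc}$ system at the epochs when $X^n$ returns to $0$. The key observation is that whenever $X^n(t)=0$, we have $Z^n_1(t)=0$ and $Q^n(t)=0$, so that $L^n(t)=0$ deterministically at such epochs. Combined with the strong Markov property of the Poisson arrivals and the fact that every customer entering a previously-empty system begins service with no prior wait (and hence with an exponential service time independent of its patience), this turns the successive visits of $X^n$ to $0$ into genuine regeneration epochs for the complete state descriptor, and therefore for $L^n$, which is a measurable functional of that descriptor.

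Next, I would verify the two standard hypotheses needed to invoke \cite[Theorem 2.1(b)]{sigman1993review}. The expected cycle length is finite by the same coupling with the $M/M/\infty$ queue used in Theorem \ref{ThErgodicity}: the $M/M_{pc}/n+M_{pc}$ system is empty whenever its $M/M/\infty$ upper bound is, and the latter is an ergodic CTMC with finite-mean busy cycle. The cycle-length distribution is non-lattice (indeed, absolutely continuous) because each cycle begins with an exponentially distributed inter-arrival time before any dynamics take place. These two facts, together with the regenerative property identified above, deliver the existence of a unique stationary distribution that coincides with the limit of $L^n(t)$ as $t\tinf$.

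The main obstacle, mild as it is, is conceptual rather than technical. The regenerative argument in Theorem \ref{ThErgodicity} is stated for the scalar process $X^n$, whereas here one needs the full state descriptor (remaining phase-$1$ service times of customers currently in service plus elapsed waiting times of customers currently in queue) to regenerate at visits to the empty state. Since that full descriptor reduces to the zero vector at such epochs, and its subsequent evolution depends only on future Poisson arrivals and their i.i.d.\ service-patience bivariates, the regenerative property extends verbatim to any measurable functional of the descriptor, and in particular to $L^n$. Once this extension is in place, the proof is complete.
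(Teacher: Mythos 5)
Your proposal is correct and follows essentially the same route as the paper: identify $L^n$ as a regenerative process, regenerating at the empty-system epochs of $X^n$, and invoke the finite expected cycle length already established in Theorem~\ref{ThErgodicity} to conclude positive recurrence and hence existence and uniqueness of the stationary (and limiting) distribution. Your additional remarks — that $L^n$ vanishes deterministically at regeneration epochs, that the non-lattice condition holds because a cycle begins with an exponential idle period, and that regeneration of the full state descriptor (rather than just $X^n$) is what is actually needed — are sound and merely make explicit what the paper leaves implicit in the phrase ``Similarly to $X^n$.''
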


\begin{proof}
Similarly to $X^n$, $L^n$ is a regenerative process, regenerating when $X^n$ hits state $0$, namely, when the system empties.
It follows from Theorem \ref{ThErgodicity} that the expected cycle length of $L^n$ is finite, so that $L^n$ is positive recurrent, implying the result.
\end{proof}
We let $L^n(\infty)$ denote a random variable that has the stationary distribution of $L^n$.
\begin{prop}
  \label{PropL}
  There exists a constant $M>0$, such that, for all $n \in \ZZ_+$,

(a) $E[L^n(\infty)]\le n^{1/2} M$;

(b) if $\beta>0$, then $E[L^n(\infty)]<M$.
\end{prop}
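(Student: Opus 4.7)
The plan is to decompose $L^n = L^n_Q + L^n_R$, where $L^n_Q(t) := \sum_{i=1}^{Q^n(t)} \ell^n_i(t)$ and $L^n_R(t) := \sum_{j=1}^{Z^n_1(t)} r^n_j(t)$, and to bound each term separately by combining drift (Dynkin) identities at stationarity---which exists by Proposition \ref{PropSL}---with a Little's law argument applied to the ``area under the trajectory'' of a typical customer. The key will then be reducing everything to two moments of the steady-state queue length.

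For $L^n_Q$, I would identify the drift by inspection: between jumps $\dot L^n_Q = Q^n$ since each age grows at rate one; abandonments (rate $\theta$ per queued customer) remove the abandoning customer's age, contributing $-\theta L^n_Q$ to the expected drift; and head-of-line service entries (rate $Z^n_2(t) \mathbf{1}\{Q^n(t)>0\}$) remove the HOL age $\ell^n_1$. Setting the expected drift to zero in stationarity gives
\begin{equation*}
\theta E[L^n_Q(\infty)] + E\!\left[\ell^n_1(\infty)\, Z^n_2(\infty)\, \mathbf{1}\{Q^n(\infty)>0\}\right] = E[Q^n(\infty)],
\end{equation*}
so $E[L^n_Q(\infty)] \le E[Q^n(\infty)]/\theta$. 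For $L^n_R$, since each phase-1 customer enters with $r = \theta W$ and $r$ decreases linearly to zero over $\theta W$ time units, the integrated contribution per served customer is $(\theta W)^2/2$; applying Little's law and recalling $W = U \le T$ on $\{\text{served}\}$ (with $U$ the in-queue sojourn time) yields
\begin{equation*}
E[L^n_R(\infty)] = E[Z^n_2(\infty)] \frac{\theta^2}{2} E[W^2 \mid \text{served}] \;\le\; \frac{\lambda^n \theta^2}{2}\, E[U^2(\infty)].
\end{equation*}
An entirely analogous Little's law identity shows $E[L^n_Q(\infty)] = \lambda^n E[U^2(\infty)]/2$ as well, so the problem reduces to bounding $E[Q^n(\infty)]$ (for part (b) this suffices immediately, up to order $\sqrt{n}$) and $E[U^2(\infty)]$ (for part (a)).

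The remaining step---and the main obstacle---is to bound these two quantities uniformly in $n$ at the correct scales: $E[Q^n(\infty)] = O(n^{3/4})$ in general and $E[Q^n(\infty)] = O(\sqrt{n})$ when $\beta > 0$. By PASTA, conditioning on $Q^n(\infty) = q$ one can couple $U$ to $\min(W_{\mathrm{virt}}, T)$ where $W_{\mathrm{virt}}$ is a sum of inter-HOL-transition times, and the elementary bound $\min(a,b)^2 \le ab$ gives $E[U^2 \mid Q^n(\infty)=q] \le E[W_{\mathrm{virt}} \mid q]/\theta$. The naive estimate $E[W_{\mathrm{virt}} \mid q] \le (q+1)/\theta$ (counting only HOL abandonment) is far too loose; the sharper estimate $E[W_{\mathrm{virt}} \mid q] \le C(q+1)/n$ requires showing that $Z^n_2(\infty) \ge cn$ with high probability, which itself hinges on controlling $E[Z^n_1(\infty)]$. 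The cleanest route I see is to combine the infinite-server coupling of Theorem \ref{ThErgodicity} with the submartingale properties of the two-parameter processes alluded to in the introduction (cf.\ Lemma \ref{Lem:FMartingales}), and to close a self-referential inequality of the form $E[Q^n(\infty)^2] \le c_1 n\, E[Q^n(\infty)] + c_2 n$ obtained from Dynkin's formula applied to $V(X)=(X-n)^{+2}$. Closing this inequality at the right scale is the delicate point, since $X^n$ is not Markov and the available a priori bound $Q^n \le N^\infty - n$ from the infinite-server coupling gives only $O(n)$.
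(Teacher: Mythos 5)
Your decomposition $L^n = L^n_Q + L^n_R$, the rate-conservation identity for $L^n_Q$, and the ``$H=\lambda G$'' reduction to a second moment of the stationary in-queue sojourn time all match the structure of the paper's argument (the paper applies Wolff's $H=\lambda G$ once to the whole of $L^n$ through a per-customer area $G^n_j=(v^n_j\wedge T^n_j)^2/2 + 1\{T^n_j\ge v^n_j\}(\theta v^n_j)^2/2$; you split first, which is fine). However, your proof does not close: both parts (a) and (b) have been reduced to bounds on $E[Q^n(\infty)]$ and $E[(w^n_v(\infty)\wedge T)^2]$ at the scales $O(n^{3/4})$, $O(n^{1/2})$ and $O(n^{-1/2})$, and you never establish any of these. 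You correctly diagnose why the routes you sketch will fail---$X^n$ is not Markov, so Dynkin's formula for $(X^n-n)^{+2}$ has no closed drift in terms of $Q^n$ (the completion rate is $Z^n_2$, which depends on the whole history of waits), and the infinite-server coupling only yields $O_P(n)$---but that leaves the decisive estimate unproved.

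The ingredient you are missing is the flow-balance identity \eqref{Eq:Balance},
\begin{equation*}
1 - E\big[f(\theta w^n_v(\infty))\big] = O(n^{-1/2}), \qquad f(x) := e^{-x}(1+x),
\end{equation*}
which the paper derives by equating, at stationarity, the rate at which work enters service with the server-pool throughput, $\lambda^n E[S\,1\{S\ge\theta w^n_v(\infty)\}]=E[Z^n(\infty)]$, and then bounding $E[Z^n(\infty)]$ from below by $E[Z^n_A(\infty)]$ via the Erlang-A coupling of Lemma \ref{LemCouplingL} together with the $O(n^{-1/2})$ abandonment asymptotics of \cite{garnett2002designing}. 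Given \eqref{Eq:Balance}, the identity $E[(w^n_v(\infty)\wedge T)^2]=2\theta^{-2}\big(1-E[f(\theta w^n_v(\infty))]\big)$ finishes part (a) in one line. For part (b), the paper does not bound $E[Q^n(\infty)]$ at all: it applies the one-sided $M/M_{pc}/n+M_{pc}$-versus-$M/M/n$ coupling of Lemma \ref{LemCompTrick} to get $w^n_v(\infty)\wedge T\le_{s.t.} w^n_U(\infty)$, then reads off $E[(w^n_U(\infty))^2]=n^{-2}\big(E[(Q^n_U(\infty))^2]+E[Q^n_U(\infty)]\big)=O(n^{-1})$ from the explicit Erlang-C moments in \cite[Corollary 1]{halfin1981heavy}. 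Neither the balance identity nor either coupling appears in your outline, so as written the argument has a genuine gap at the step that actually carries the proof.
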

Let
\bes
\wh L^n(0) := \frac{L^n(0)}{ n^{1/2}} \qandq \wt L^n(0) := \frac{L^n(0)}{ n^{3/4}},
\ees
and consider the families of initial conditions satisfying the following, for a random variable $X_0$.
\begin{align}
  \tag{Ia}
  \label{Ass:Ia}
  &\mbox{$(\wh{X}^n(0), \wh L^n(0))\Ra (X_0,0)$ in $\RR^2$ as $n\tinf$.}\\
  \tag{Ib}
  \label{Ass:Ib}
  &\mbox{$(\wt{X}^n(0), \wt{L}^n(0))\Ra (X_0,0)$ in $\RR^2$ as $n\tinf$, where $X_0\ge 0$ w.p.1.}
\end{align}
We can generalize Theorem \ref{ThFCLT} and Theorem \ref{Thm:TransientF} by considering initial conditions that are sufficiently ``close'' asymptotically
(under the relevant scaling) to the stationary distribution. 
\begin{thmstar}{ThFCLT}
  \label{ThFCLT2}
 Assume that \eqref{Square root} holds with $\beta \in \RR$. If \eqref{Ass:Ia} holds, then $\wh{X}^n\Ra \wh X_C$ in $\D$ as $n\tinf$.
\end{thmstar}
\begin{thmstar}{thFWLLN}
  \label{ThLLF2}
 Assume that $\beta\le0$. If \eqref{Ass:Ib} holds, then $\wt{X}^n\Ra x_F$ in $\D$ as $n\tinf$,
 where, conditional on $\{X_0 = x_0\}$, for a positive scalar $x_0$, $x_F$ is the unique solution to the IVP \eqref{Eq:FODE}.
\end{thmstar}
Note that the initial conditions in Theorems \ref{ThFCLT2} and \ref{ThLLF2} hold trivially if Assumption \ref{AssInitial} holds,
and due to Proposition \ref{PropL}, when the system is stationary, namely, when $L^n(0) \deq L^n(\infty)$.
More generally, \eqref{Ass:Ia} and \eqref{Ass:Ib} hold whenever $L^n(0)$ and $L^n(\infty)$ have the same order of magnitude,
i.e., when $L^n(0) = O_P(L^n(\infty))$.

%

\section{Sample-Path Representation} \label{secTransient}

Let $A$, $S$ and $R$ be three independent unit-rate Poisson processes.
We represent the Poisson arrival process in system $n$ via $A^n(t) := A(\lambda^n t)$, $t\ge0$,
and exploit the memoryless property of the exponential distribution to characterize the departures from service and
abandonment. In particular, for $D^n(t)$ and $R^n(t)$ denoting the number of departures from service and number of abandonment by time $t$ in system $n$,
respectively, we have
$$D^n(t) = S \left(\int^t_0Z_2^n(s)ds \right) \qandq R^n(t) = R \left(\theta\int^t_0Q^n(s)ds \right), \quad t\ge 0.$$
Then
\begin{equation}
  \label{Eq:s}
  X^n(t)=X^n(0)+A(\lambda^nt)-S\left(\int^t_0 Z^n_2(s)ds\right)-R\left(\theta\int^t_0 Q^n(s)ds\right), \quad t \ge 0.
\end{equation}
Notice that the following basic equalities hold:
\begin{equation}
  \label{Eq:ssss}
  Q^n=(X^n-n)\vee0,\quad Z^n=X^n\wedge n, \quad Z^n=Z^n_1+Z^n_2.
\end{equation}
To fully characterize $X^n$, we need to characterize $Z^n_2$, or equivalently, $Z^n_1$.
Let $Z^n_0(t)$ denote the number of customers who were in the system initially (at time $0$), and are in their phase-$1$ service at time $t$.
Let $T^n_0$ be the time in which the last customer from the initial queue leaves the queue, either by entering service or by abandoning the queue; in particular,
at any $t < T^n_0$ there are customers in queue that were waiting in the queue at time $0$, and there are no such customers in the queue at any time $t \ge T^n_0$.
For any $t\ge0$, let $w^n(t)$ be the minimum between $t$ and the waiting time of the head-of-line customer.
We set $w^n(t) := 0$ if $Q^n(t) = 0$.

Now, if a departure from service occurs at time $s\in[T^n_0,t]$ and $Q^n(s-) > 0$, then
the customer at the head of the line begins his phase-$1$ service, and that customer is still in phase $1$ at time $t$ if and only if $\theta w^n(s-)+s>t$.
Note that the latter statement holds trivially if $Q^n(s-) = 0$, because then $w^n(s-) = 0$. We can therefore characterize $Z^n_1$ via the departure process
as follows.
\begin{equation}
  \label{Eq:ss}
  Z^n_1(t) = Z^n_0(t) + \int^t_{T^n_0\wedge t}1\{\theta w^n(s-)+s>t\}dD^n(s), \quad t\ge0.
\end{equation}

To characterize the process $w^n$, we number the customers that arrive after time $0$ by the order of their arrival,
and denote by $E^n_k$ by the arrival time of the $k$th customer to system $n$,
i.e., $E^n_k := \inf\{t : A^n(t)=k\}$.
Let $T^n_k$ denote the patience time of the $k$th arrival to system $n$, so that $\{T_k^n : k \ge 1\}$
is a sequence of independent exponential random variable with mean $1/\theta$ for each $n \ge 1$.
Under the FIFO policy, the arrival time of any customer that is in queue at time $t$ is no less the arrival time of the head-of-line customer
at that time, the latter being equal to $t-w^n(t)$.
Hence, if the $k$th customer arrives during the time interval $[t-w^n(t), t)$, then that customer is still in the system (waiting in queue) at time $t$
if and only if $E^n_k+T^n_k>t$.
This gives
\begin{equation}
  \label{Eq:sss}
  Q^n(t)=\int^t_{t-w^n(t)}1\{E^n_{A^n(s)}+T^n_{A^n(s)}> t\}dA^n(s)+Q^n_0(t),\mbox{ for all } t\ge0,
\end{equation}
where $Q^n_0(t)$ is the number of customers that were waiting in queue at time $0$, and are still waiting in queue at time $t$.
Note that, due to abandonment, $Q^n_0(t) \le (Q^n(0)-D^n(t))^+$, and that there are no waiting customers at time $t$ if there are idle agents, so that
\begin{equation}
  \label{Eq:ZW}
  (Z^n-n)w^n=0\eta.
\end{equation}

If we assume that $L^n(0)=0$, so that $Q^n_0=Z^n_0=0\eta$, then \eqref{Eq:s}--\eqref{Eq:sss} characterize the system's dynamics via the primitives
$A^n$, $S^n$, and $\{T^n_k:k\in\ZZ_+\}$.
When $L^n(0)>0$, the dynamics of the $n$th system depend also on $\{\ell^n_i(0)\}$ and $\{r^n_j(0)\}$.
However, as will be proved below, the impact of these two sequences is asymptotically negligible,
in that they do not alter the diffusion limit and LOF limit under our assumed initializations in \eqref{Ass:Ia} and \eqref{Ass:Ib}.


\subsection{A Martingale Representation} \label{secMartRep}


Let $\mathcal{F}^n_0$ be the the $\sigma$-algebra generated by
\begin{equation*}
  \{X^n_0,\ell^n_i(0),r^n_j(0): 1 \le i \le Q^n(0), 1 \le j \le Z^n(0)\},
\end{equation*}
augmented by including all $\mathrm P$-null sets.
For $t \ge 0$ and $n \ge 1$, let $\mathcal{F}^n := \{\mathcal{F}^n_t : t \ge 0\}$, where $\mathcal F^n_t$
is the right-continuous $\sigma$-algebra associated to the $\sigma$-algebra generated by
\begin{align*}
  \big(\mathcal{F}^n_0,\ell^n_i(s), r^n_j(s), A^n(s), D^n(s), R^n(s):\,
 1 \le i \le Q^n(t),\, 1 \le j \le Z^n(t),\, s \in [0,t]\big).
\end{align*}

Note that the processes $X^n, Q^n, Q^n_0, Z^n, Z^n_i$, $i = 0,1,2$, and $w^n$ have sample paths in $D$ by construction.
Now, $w^n(t)=1\{Q^n(t)>0\}\ell^n_1(t)$, so that $w^n$ is $\mathcal F^n$-adapted, and it therefore follows from
\eqref{Eq:s}--\eqref{Eq:ss} and the equality
\begin{equation*}
  Z^n_0(t)+Z^n_1(t)=\sum^{Z^n(t)}_{j=1}1\{r^n_j(t) > 0\}.
\end{equation*}
that $X^n$, $Q^n$, $Z^n$, and $Z^n_i$, $i=0,1,2$, are also $\mathcal F^n$-adapted.
Finally, noting that $T^n_0=\inf\{t\ge0: w^n(t)>t\}$ shows that $T^n_0$ is an $\mathcal{F}^n$-stopping time.

Consider the processes
\begin{equation*} \label{Eq:Martingales}
\begin{split}
M^n_A(t) & := A^n(t)-\lambda^n t, \qquad 
M^n_S(t) := D^n(t)-\int^{t}_0 Z_2^n(s)ds,  \\
M^n_R(t) & := R^n(t)-\theta\int^{t}_0 Q^n(s)ds ,\qquad t \ge 0.
\end{split}
\end{equation*}
Since  $Z^n_2\le n$ and $D^n(t)\le S(nt)$, we have $E[|M^n_i(t)|]<\infty$ and $E[|M^n_i(t)|^2]<\infty$, for $i=A$ and $S$.
Therefore $M^n_A$ and $M^n_S$ are square-integrable $\mathcal{F}^n$-martingales.
Note that $R^n$ and $Q^n$ have nonnegative sample paths that are bounded pathwise by the sample paths of $A^n+X^n(0)$.
For $\tau^n_k:=k1\{|X^n(0)|<k\}$, $M^n_R(\cdot\wedge \tau^n_k)$ is a square-integrable $\mathcal{F}^n$-martingale, and since
$\tau^n_k\ra\infty$ w.p.1 as $k\tinf$, $M^n_R$ is an $\mathcal{F}^n$-local martingale.
Thus, \eqref{Eq:s} admits the following {\em martingale representation}
\begin{equation*}
  X^n(t)=X^n(0)+\lambda^nt-\int^t_0Z^n_2(s)ds-\theta\int^t_0Q^n(s)ds+M^n_A(t)-M^n_S(t)-M^n_R(t).
\end{equation*}
Next, for
\begin{equation}
  \label{Eq:U1}
  U^n_1(t):=\int^t_{T^n_0\wedge t}1\{\theta w^n(s-)+s>t\}dM_S^n(s),\quad t\ge0,
\end{equation}
we can rewrite \eqref{Eq:ss} to obtain
\begin{equation}
  \label{Eq:ssc}
  Z^n_1(t)=\int^t_{T^n_0\wedge t}1\{\theta w^n(s)+s>t\}Z_2^n(s)ds+U^n_1(t)+Z^n_0(t),
\end{equation}
so that
\begin{align}
  \label{Eq:ssic}
  \int^t_0Z^n_1(s)ds&=\int^t_0(U^n_1(s)+Z^n_0(s))ds+\int^t_0\int^s_{T^n_0\wedge s}1\{\theta w^n(u)+u>s\}Z^n_2(u)du ds\nonumber\\
  &=\int^t_0(U^n_1(s)+Z^n_0(s))ds+\int^t_{T^n_0\wedge t}(\theta w^n(u))\wedge(t-u)Z^n_2(u)du.
\end{align}
The last integral in \eqref{Eq:ssic} follows from Fubini's theorem together with the fact that
\bes
\int_a^b 1\{s < c\} ds = b \wedge c - a \wedge c, \qforq a \le b,
\ees
so that
\bes
\int_u^t 1\{\theta w^n(u)+u>s\}ds = (\theta w^n(u)+u) \wedge t - u = \theta w^n(u) \wedge (t-u).
\ees
Finally, let
\begin{equation}
  \label{Eq:F}
  F^n(s,t):=\int^s_0 1\{E^n_{A^n(u)}+T^n_{A^n(u)}>t\}dA^n(u) + \theta^{-1} \lambda^n(e^{-\theta t}-e^{-\theta(t-s)}).
\end{equation}
Then, for
\begin{equation}
  \label{Eq:U2}
  U_2^n(t):=F^n(t,t)-F^n(t-{w}^n(t),t),
\end{equation}
we can rewrite \eqref{Eq:sss} as follows
\begin{equation}
  \label{Eq:sssc}
  Q^n(t)=\theta^{-1}\lambda^n(1-e^{-\theta w^n(t)})+U^n_2(t)+Q^n_0(t),\;t\ge0.
\end{equation}

Plugging \eqref{Eq:ssic} and \eqref{Eq:sssc} in \eqref{Eq:s}, and using the equality $Z=Z_1+Z_2$, give the following modified martingale representation
\begin{align}
  \label{Eq:sc}
  X^n(t)=&X^n(0)+\lambda^n t-\int^t_0Z^n(s)ds+V^n(t)+\int^t_0\left(Z^n_0(t)-\theta Q^n_0(s)\right)ds\nonumber\\
  &+\int^t_0\left(U^n_1(s)-\theta U^n_2(s)\right)ds
  +M^n_A(t)-M^n_S(t)-M^n_R(t), \quad t \ge 0,
\end{align}
where
\begin{equation}
  \label{Eq:V}
  V^n(t):=\int^t_{T^n_0\wedge t}\left(\theta w^n(s)\right)\wedge(t-s)Z_2^n(s)ds-\int^t_0\lambda^n(1-e^{-\theta w^n(s)})ds.
\end{equation}


\section{Proofs of Main Results}
\label{secMainProof}
In this section we prove the main results in the paper, building on auxiliary results whose proofs are relegated to Section \ref{secAuxProof1}.
Let
\bequ \label{beta^n}
\beta^n:=n^{-1/2}(n-\lambda^n),
\eeq
and note that, due to the square-root staffing rule in \eqref{Square root}, $\beta^n \ra \beta$ as $n\tinf$.

\subsection{Proof of Theorem \ref{ThFCLT2}}

We consider the diffusion-scaled random variables and processes
\begin{align*}
  \wh Q^n &:= n^{-1/2}Q^n, &\wh Z^n&:= n^{-1/2}(Z^n-n) & \wh{L}^n(0) &:= n^{-1/2}L^n(0), & \wh{w} &:= n^{1/2}w^n,\\
  \wh{Z}^n_1 &:= n^{-1/2}Z_1^n, & \wh{Z}^n_2 &:= n^{-1/2}(Z_2^n-n), &\wh{Q}^n_0 &:= n^{-1/2}Q^n_0, & \wh{Z}^n_0 &:= n^{-1/2}Z^n_0.
\end{align*}
We similarly consider the diffusion-scaled processes in the martingale representation
\begin{align*}
\wh{M}^n_i:=n^{-1/2}M^n_i, ~i = A,S,R, \quad \wh{U}_1^n:=n^{-1/2}U^n_1,\quad
\wh{U}_2^n:=n^{-1/2}U^n_2,\quad  \wh{V}^n:=n^{-1/2}V^n.
\end{align*}
Using the diffusion scaling in \eqref{Eq:sc} gives
\begin{align}
  \label{Eq:XD}
  \wh{X}^n(t)&=\wh{X}^n(0) - \beta^n t - \int^t_0\wh{Z}^n(s)ds+\wh{V}^n(t)+\int^t_0\left(\wh{Z}^n_0(s)-\theta \wh{Q}^n_0(s)\right)ds\nonumber\\
  & \quad +\int^t_0\left(\wh{U}_1^n(s)-\theta\wh{U}^n_2(s)\right)ds
  +\wh{M}^n_A(t)-\wh{M}^n_S(t)-\wh{M}^n_R(t).
\end{align}

The proof of Theorem \ref{ThFCLT2} is a straight forward application of the continuous-mapping theorem, given the following key result,
whose proof appears in Section \ref{SubSec:ProofD}.
\begin{prop}
  \label{Prop:D}
Assume that \eqref{Ass:Ia} holds. Then as $n\tinf$,
  \begin{itemize}
    \item[a.] $(\int^\cdot_0\wh{Q}^n_0(s)ds, \int^\cdot_0\wh{Z}^n_0(s)ds)\Ra (0\eta,0\eta) \mbox{ in } \D^2$;
    \item[b.] $(\int^\cdot_0\wh{U}^n_1(s)ds,\int^\cdot_0\wh{U}^n_2(s)ds,\wh{V}^n)\Ra (0\eta,0\eta,0\eta)\mbox{ in }\D^3$;
    \item[c.] $(\wh{M}^n_A,\wh{M}^n_S,\wh{M}^n_R)\Ra (B_1,B_2,0\eta)\mbox{ in }\D^3$, where $B_1$ and $B_2$ are two independent standard Brownian motions.
  \end{itemize}
\end{prop}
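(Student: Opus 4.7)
My plan is to proceed in three stages: first establish a priori stochastic boundedness of the scaled processes, then handle parts (c) and (a), and finally tackle part (b), which is the main technical obstacle. The a priori bounds I need are $\wh Q^n = O_P(1)$, $\wh Z^n = O_P(1)$, and $w^n = O_P(n^{-1/2})$ on any compact interval $[0,T]$. I would obtain these by a localization argument: define $\tau^n_K := \inf\{t : |\wh X^n(t)| > K\}$ and work on $[0, \tau^n_K \wedge T]$, on which $\wh X^n$ is bounded by $K$; the representation \eqref{Eq:sc} combined with Gronwall's inequality and bounds on the auxiliary terms (which are essentially the content of parts (a) and (b), proved in this localized setting) then yields $P(\tau^n_K \le T) \to 0$ as $K \to \infty$, uniformly in $n$. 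The bound $w^n = O_P(n^{-1/2})$ follows from \eqref{Eq:sss}: the dominant right-hand term concentrates around $\theta^{-1} \lambda^n (1 - e^{-\theta w^n(t)}) \ge (\lambda^n/2) w^n(t)$ when $w^n$ is small, so $w^n = O_P(Q^n / \lambda^n) = O_P(n^{-1/2})$.

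With these bounds in hand, part (c) is a routine application of the functional martingale CLT (e.g., Theorem 7.1.4 in Ethier--Kurtz): the predictable quadratic variations satisfy $\langle \wh M^n_A \rangle_t = \lambda^n t/n \to t$, $\langle \wh M^n_S \rangle_t = n^{-1} \int_0^t Z^n_2(s) ds \to t$ (since $Z^n_2 = n + O_P(\sqrt n)$), and $\langle \wh M^n_R \rangle_t = \theta n^{-1} \int_0^t Q^n(s) ds = O_P(n^{-1/2}) \to 0$; cross quadratic variations vanish because $A$, $S$, $R$ are independent Poisson processes sharing no jumps, and the jumps of size $n^{-1/2}$ also vanish, giving independent Brownian limits. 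Part (a) follows from elementary identities. For $Z^n_0$,
\begin{align*}
\int_0^t Z^n_0(s) ds = \sum_{j=1}^{Z^n_1(0)} (r^n_j(0) \wedge t) \le L^n(0),
\end{align*}
so $\int_0^t \wh Z^n_0(s) ds \le \wh L^n(0) \Ra 0$ by \eqref{Ass:Ia}. For $Q^n_0$, I would show $T^n_0 = O_P(n^{-1/2})$ by noting that while $Q^n_0 > 0$ the initial queue customers occupy the head of the FIFO queue and hence are served at rate $\Theta(n)$, so draining $Q^n(0) = O_P(\sqrt n)$ initial customers takes $O_P(n^{-1/2})$ time units; then $\int_0^t Q^n_0(s) ds \le Q^n(0) (T^n_0 \wedge t) = O_P(1)$ and its scaled version vanishes.

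Part (b), the main obstacle, exploits the smallness $w^n = O_P(n^{-1/2})$. For $U^n_1$, I observe that for each fixed $t$ the process $u \mapsto \int_{T^n_0 \wedge t}^u 1\{\theta w^n(s-) + s > t\} dM^n_S(s)$ is an $\mathcal F^n$-martingale whose predictable quadratic variation is dominated by $\int_{t - \theta \|w^n\|_t}^t Z^n_2(s) ds = O_P(n \cdot n^{-1/2}) = O_P(\sqrt n)$, whence $(U^n_1(t))^2 = O_P(\sqrt n)$, so $\wh U^n_1(t) = O_P(n^{-1/4})$ and $\int_0^T |\wh U^n_1(s)| ds = o_P(1)$, yielding uniform convergence of the integral to $0\eta$. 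An analogous estimate for $U^n_2$ uses that, by \eqref{Eq:F}, $s \mapsto F^n(s, t)$ is a compensated counting-process martingale whose increment over $[t - w^n(t), t]$ has conditional variance $\lambda^n(1 - e^{-\theta w^n(t)})/\theta = O_P(\sqrt n)$. For $V^n$, the Taylor expansions $(\theta w^n(s)) \wedge (t-s) = \theta w^n(s)$ (except on a vanishing set of measure $O_P(n^{-1/2})$) and $1 - e^{-\theta w^n(s)} = \theta w^n(s) - O_P((w^n(s))^2)$ give
\begin{align*}
V^n(t) = \int_0^t \theta w^n(s)(Z^n_2(s) - \lambda^n) ds + O_P(1);
\end{align*}
since $Z^n_2 - \lambda^n = O_P(\sqrt n)$, the leading term is $O_P(\sqrt n \cdot n^{-1/2}) = O_P(1)$, whence $\wh V^n \to 0\eta$. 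The hardest step is closing the a priori bound argument by bootstrap, since the auxiliary estimates needed to apply Gronwall in \eqref{Eq:sc} are themselves essentially the conclusions of parts (a) and (b), so all pieces must be proved inside the localized interval $[0, \tau^n_K \wedge T]$ with constants whose $K$-dependence permits the bootstrap to close.
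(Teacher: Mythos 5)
Your proposed route is genuinely different from the paper's, and it contains a gap you yourself flag: the bootstrap that is supposed to yield the a~priori bounds $\wh Q^n=O_P(1)$, $w^n=O_P(n^{-1/2})$ is never actually closed. You note that the auxiliary estimates needed to run Gronwall in \eqref{Eq:sc} are ``essentially the conclusions of parts (a) and (b),'' and you propose to prove them on the localized interval $[0,\tau^n_K\wedge T]$; but you do not actually verify that the $K$-dependence of the resulting constants allows $P(\tau^n_K\le T)\to 0$ as $K\to\infty$ uniformly in $n$, nor that all the intermediate quantities ($w^n$, $Z^n_1$, $U^n_2$) admit the required bounds purely from $\wh X^n\le K$ on that interval. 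In particular, the relation between $Q^n$ and $w^n$ via \eqref{Eq:sssc} contains the remainder $U^n_2$, whose control again requires $w^n$, so this step itself has a circularity one must break. The paper avoids this difficulty entirely: it never applies Gronwall. The a~priori bound is obtained by a pathwise coupling with a system having no abandonment (Lemmas \ref{Lem:SBQ}, \ref{Lem:SBw}, with the technique of Lemma \ref{LemCompTrick}), which gives stochastic boundedness directly and non-circularly, at the LOF scale. Then, since \eqref{Ass:Ia} implies \eqref{Ass:Ib}, Propositions \ref{Prop:F}--\ref{Prop:V} and Lemma \ref{Lem:SBXZ} are all available, and the paper's proof of Proposition \ref{Prop:D} simply reads off the diffusion-scale claims through the time change $t\mapsto n^{-1/4}t$, e.g.\ $\wh U^n_1(t)=n^{1/4}\wt U^n_1(n^{-1/4}t)$ and $\|\wt w^n\|_{n^{-1/4}\tau}\Ra 0$.

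Two further soft spots in your sketch, which the paper's proof handles carefully and which would need repair even if the bootstrap were closed. First, in part~(a) your claim that ``the initial queue customers are served at rate $\Theta(n)$'' and hence $T^n_0=O_P(n^{-1/2})$ implicitly assumes $Z^n_2$ stays bounded below by a positive fraction of $n$ on $[0,T^n_0]$. Under \eqref{Ass:Ia} one cannot rule out that many customers are initially in phase~1 (with tiny residual phase-1 times, so that $L^n(0)=o_P(\sqrt n)$ is still respected), in which case $Z^n_2$ could start small. The paper's proof of Proposition~\ref{Prop:F}(a) handles exactly this via the auxiliary stopping time $T^n_1$ and the event $\Upsilon^n$, showing the phase-2 pool refills to at least $n/2$ within $O(L^n(0)/n)$ time. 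Second, your assertion $Z^n_2=n+O_P(\sqrt n)$ in part~(c) needs $Z^n_1=O_P(\sqrt n)$, which is not among your stated a~priori bounds and would have to be derived (the paper instead bounds $n^{-1}\int_0^\cdot Z^n_2$ by routing through $\wt Z^n_2+\wt Z^n_0$ and Lemma~\ref{Lem:SBXZ}, which sidesteps a direct bound on $Z^n_1$ at diffusion scale). In short: the structure of your argument is plausible, but the crucial a~priori bound is asserted rather than proved, and the route via Gronwall is more delicate than the paper's coupling-plus-LOF translation and would not obviously simplify the overall argument.
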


\begin{proof}[Proof of Theorem \ref{ThFCLT2}]
Using the equality $\wh{Z}^n=\wh{X}^n\wedge0$ and \eqref{beta^n} in \eqref{Eq:XD}, we have
\begin{equation}
  \label{Eq:XCConvergence}
  \wh{X}^n(\cdot)-\wh{X}^n(0)+\beta^n \eta(\cdot)-\int^\cdot_0\wh{X}^n(s)\wedge0ds\Ra \sqrt{2}B(\cdot) \qinq \D \mbox{ as }n\tinf,
\end{equation}
where $B$ is a standard Brownian motion.

By \cite[Theorem 4.1]{pang2007martingale}, 
there exists a unique solution $x\in \D$ to the integral equation
\begin{equation} \label{contMap}
  x(t)=x(0)-\beta t-\int^t_0 x(s)\wedge 0 ds+y(t),\mbox{ for all }t\ge0,
\end{equation}
and the mapping $\phi: D \ra D$, which maps the function $y$ in \eqref{contMap} to the solution $x$, is continuous in the $J_1$ topology.
Further, if $y$ is continuous, then so is $x$.
Hence, the statement of the theorem follows from \eqref{Eq:XCConvergence} and the continuous mapping theorem, by noting that
\bes
  \wh{X}^n=\phi(\wh{X}^n(\cdot)-\wh{X}^n(0)+\beta^n \eta(\cdot)-\int^\cdot_0\wh{X}^n(s)\wedge0ds),
  \ees
  and that $\wh X_C=\phi(\sqrt{2}B)$.
\end{proof}


\subsection{Proof of Theorem \ref{ThLLF2}.}

To establish the LOF limit, we consider the scaled processes
\begin{align*}
  \wt{Q}^n(t)   &:= n^{-3/4}Q^n(n^{1/4}t),   & \wt{Z}^n(t)   & := n^{-3/4}(Z^n(n^{1/4}t)-n), \\
  \wt{Z}^n_1(t) &:= n^{-3/4}Z_1^n(n^{1/4}t), & \wt{Z}^n_2(t) & := n^{-3/4}(Z_2^n(n^{1/4}t)-n), \\
  \wt{Q}^n_0(t) &:= n^{-3/4}Q^n(n^{1/4}t),   & \wt{Z}^n_0(t) & := n^{-3/4}Z^n_0(n^{1/4}t), \\
  \wt{U}_1^n(t) &:= n^{-3/4}U^n_1(n^{1/4}t), & \wt{U}_2^n(t) & := n^{-3/4}U^n_2(n^{1/4}t),\\
  \wt{V}^n(t)   &:= n^{-3/4}V^n(n^{1/4}t),   & \wt{L}^n(t)   & := n^{-3/4}L^n(n^{1/4}t),  \\
  \wt{w}^n(t)   &:=n^{1/4}w^n(n^{1/4}t),     & \wt{T}^n_0    & := n^{-1/4}T^n_0,   
\end{align*}
and $\wt{M}^n_i(t) := n^{-3/4}M^n_i(n^{1/4}t)$, for $i=A,S,R$.
Then the corresponding scaled process in \eqref{Eq:s} is represented via
\begin{align}
  \label{Eq:sF}
  \wt{X}^n(t)&=\wt{X}^n(0)-\beta^n t-n^{1/4}\int^t_0\wt{Z}^n(s)ds+\wt{V}^n(t)+n^{1/4}\int^t_0\left(\wt{Z}^n_0(s)-\theta \wt{Q}^n_0(s)\right)ds\nonumber\\
  &+n^{1/4}\int^t_0\left(\wt{U}_1^n(s)-\theta\wt{U}^n_2(s)\right)ds
%
  +\wt{M}^n_A(t)-\wt{M}^n_S(t)-\wt{M}^n_R(t).
\end{align}

The proof of Theorem \ref{ThLLF2} builds on the following three supporting propositions, whose proofs appear in Section \ref{secAuxProof1}.
Throughout, {\em we assume that \eqref{Ass:Ib} holds.}
\begin{prop}
  \label{Prop:F}
  As $n\tinf$,
  \begin{itemize}
    \item[a.] $n^{1/4}(\int^\cdot_0\wt{Z}^n_0(s)ds, \int^\cdot_0\wt{Q}^n_0(s)ds)\Ra(0\eta,0\eta)$ in $\D^2$ and $T^n_0\Ra0$ in $\RR$.
    \item[b.] $(\wt{M}^n_A,\wt{M}^n_S,\wt{M}^n_R)\Ra (0\eta,0\eta,0\eta)$ in $\D^3$.
    \item[c.] $n^{1/4}\int^\cdot_0\wt{U}^n_1(s)ds\Ra0\eta$ in $\D$.
    \item [d.]$n^{1/4}\wt{U}^n_2\Ra 0\eta$, so that $n^{1/4}\int^\cdot_0\wt{U}^n_2(s)ds\Ra0\eta$ in $\D$.
  \end{itemize}
\end{prop}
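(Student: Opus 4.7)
The plan is to prove the four convergence statements in sequence, leveraging the initial condition $\wt L^n(0)\Ra 0$ from \eqref{Ass:Ib} together with a priori stochastic bounds on $Q^n$ and $w^n$ under the LOF scaling. Throughout, I would repeatedly use the change of variables $u=n^{1/4}s$ to translate bounds on the prelimit scale to bounds on the LOF scale.

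First, for part (b), I would apply Doob's $L^2$ maximal inequality to the rescaled martingales. Since $\langle M^n_A\rangle(t)=\lambda^n t$ and $\langle M^n_S\rangle(t)=\int_0^t Z^n_2(s)\,ds\le nt$, the quadratic variations after the LOF rescaling satisfy $\langle \wt M^n_i\rangle(t)=n^{-3/2}\langle M^n_i\rangle(n^{1/4}t)=O(n^{-1/4}t)$ for $i=A,S$, so $\wt M^n_A$ and $\wt M^n_S$ converge uniformly on compacts to $0\eta$. For the local martingale $M^n_R$, I would first invoke the $M/M/\infty$ coupling used in the proof of Theorem \ref{ThErgodicity} to stochastically dominate $Q^n$ pathwise by an $M/M/\infty$ process of order $O_P(n)$; this makes $\langle \wt M^n_R\rangle(t)=n^{-3/2}\theta\int_0^{n^{1/4}t}Q^n(s)\,ds=O_P(n^{-1/4})$, and the same Doob argument applies.

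For part (a), the $Z^n_0$ piece is immediate: since each initial phase-$1$ customer $j$ occupies phase $1$ only during $[0,r^n_j(0)]$, I have $\int_0^\infty Z^n_0(s)\,ds\le\sum_j r^n_j(0)\le L^n(0)$, and the change of variable gives $n^{1/4}\int_0^t \wt Z^n_0(s)\,ds\le n^{-3/4}L^n(0)=\wt L^n(0)\Ra 0$. The $T^n_0$ bound is more delicate: during $[0,T^n_0]$ the queue is nonempty, so $Z^n\equiv n$ and service completions occur at rate $Z^n_2=n-Z^n_1$, with $\int_0^t Z^n_1(s)\,ds$ bounded pathwise by $L^n(0)+\theta\int_0^t w^n(s)\,ds$ (the initial residual phase-$1$ work plus the new phase-$1$ service generated by queued customers entering service). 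Together with $Q^n(0)=O_P(n^{3/4})$, a direct accounting of the number of departures from the initial queue yields $T^n_0=O_P(n^{-1/4})\Ra 0$. The $Q^n_0$ piece then follows from $\int_0^\infty Q^n_0(s)\,ds\le Q^n(0)\,T^n_0=O_P(n^{3/4})\cdot o_P(1)$ and the change of variable.

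For parts (c) and (d), the crucial input is the a priori estimate $\|w^n\|_{n^{1/4}T}=O_P(n^{-1/4})$, which follows from $Q^n=O_P(n^{3/4})$ and the observation that the head-of-line wait is controlled by the queue length divided by the instantaneous service rate. This makes the windows $\{s\le t:\theta w^n(s-)+s>t\}$ in \eqref{Eq:U1} and $[t-w^n(t),t]$ in \eqref{Eq:U2} have width $O_P(n^{-1/4})$. For (d), the Itô isometry gives $E[U^n_2(t)^2]=O(n\cdot n^{-1/4})=O(n^{3/4})$, so $n^{1/4}\wt U^n_2(t)=n^{-1/2}U^n_2(n^{1/4}t)=O_P(n^{-1/8})$, converging uniformly on compacts by a tightness argument. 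For (c), applying Fubini transforms the double integral into the single stochastic integral $\int_{T^n_0\wedge t}^t(\theta w^n(s-)\wedge(t-s))\,dM^n_S(s)$, whose quadratic variation is $O_P(t\cdot n\cdot (w^n)^2)=O_P(tn^{1/2})$; evaluating at $n^{1/4}t$ and dividing by $n^{3/4}$ yields convergence to $0\eta$. The main obstacle is the $T^n_0$ estimate in part (a), since it couples the initial residual phase-$1$ workload to the FIFO queue dynamics, but once $T^n_0\Ra 0$ and $w^n=O_P(n^{-1/4})$ are in place, the remaining parts reduce to quadratic-variation computations combined with Doob's inequality.
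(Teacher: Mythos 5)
Your high-level plan (control everything via $\wt L^n(0)\Ra 0$, a priori bounds on $\wt Q^n$ and $\wt w^n$, and quadratic-variation estimates) does track the paper's strategy, but there are genuine gaps in parts (a), (c), and especially (d).

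In part (a), the bound $\int_0^\infty Z^n_0(s)\,ds\le\sum_j r^n_j(0)\le L^n(0)$ is incorrect: $Z^n_0(t)$ counts \emph{all} initial customers currently in phase-$1$ service, including those that were waiting in queue at time $0$ and entered service after $0$. Each such customer spends a phase-$1$ service time of $\theta$ times its total queueing delay, which is $\ell^n_i(0)$ plus the post-$0$ wait. The paper's bound (their display \eqref{Eq:Z0}) is $\int_0^t Z^n_0\le(1+\theta)L^n(0)+\theta(T^n_0\wedge t)(Q^n(0)-Q^n_0(t))$, so the $Z^n_0$ control is itself entangled with $T^n_0$ and cannot be dispatched independently of it. Your $T^n_0$ argument is also vague and risks circularity: the bound $\int_0^t Z^n_1\le L^n(0)+\theta\int_0^t w^n$ presupposes control on $w^n$, but $\wt w^n=O_P(1)$ (Lemma \ref{Lem:SBw}) is itself proved using parts (a)--(b) and Lemma \ref{Lem:SBQ}. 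The paper instead sets a deterministic time $T^n_1=4n^{-1}(1+\theta)(L^n(0)+1)$ and an explicit event $\Upsilon^n$ showing the service capacity is at least $n/2$ on $[T^n_1,T^n_0]$ with high probability, thereby stochastically dominating $T^n_0$ by $T^n_1$ plus $Q^n(0)$ i.i.d.\ exponentials of rate $n/2$, with no reference to $w^n$.

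In part (c), the object $\int_{T^n_0\wedge t}^t(\theta w^n(s-)\wedge(t-s))\,dM^n_S(s)$ is \emph{not} a martingale in $t$ because the integrand depends on $t$, so you cannot read off a supremum bound from the quadratic variation alone. The paper splits it into a true martingale $\int\theta\wt w^n(s-)\,d\wt M^n_S(s)$ (which a martingale FCLT handles) plus a remainder $n^{1/2}\int(n^{-1/2}\theta\wt w^n(s-)-t+s)^+\,d\wt M^n_S(s)$, which is then controlled by a pathwise total-variation estimate (their \eqref{Eq:SBU12}--\eqref{Eq:SBU1}). Your sketch omits this split and would not deliver the uniform-over-compacts convergence.

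The most serious gap is part (d). The ``It\^o isometry'' second-moment bound $E[U^n_2(t)^2]=O(n^{3/4})$ gives $P(|n^{-1/2}F^n|>\epsilon)=O(n^{-1/4})$, but the required supremum is over $t\in[0,n^{1/4}\tau]$, which the paper covers by $J^n=O(n^{1/2})$ overlapping subintervals. A union bound then yields $O(n^{1/2})\cdot O(n^{-1/4})=O(n^{1/4})\not\to 0$, so a second-moment estimate is insufficient in principle. The paper's proof hinges on Lemma \ref{Lem:FMartingales}, which establishes that for each $t$, $\{F^n(s,t):s\le t\}$ is a martingale and $\{e^{\theta t}\sup_{s\le t}|F^n(s,t)|\}$ is a submartingale in $t$; this allows a two-sided application of Doob's $L^p$-maximal inequality with $p=6$, giving $P(|n^{-1/2}F^n|>\epsilon)=O(n^{-3/4})$ and a union bound of order $O(n^{-1/4})\to 0$. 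The sixth moment is computed through an explicit moment-generating-function calculation $\varphi_n(s)=\exp(\gamma^n(e^{n^{-1/2}s}-1-n^{-1/2}s))$, and the submartingale structure of the two-parameter process $F^n(s,t)$ is essential (it is also emphasized in the introduction as a replacement for the usual independence-based martingale property). None of this machinery is present in your proposal, and a tightness argument resting on a second moment alone cannot close this gap.

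Part (b) is essentially right; your use of the $M/M/\infty$ coupling to get $Q^n=O_P(n)$ is a looser but sufficient substitute for the paper's Lemma \ref{Lem:SBQ}, and the remaining quadratic-variation computations are the same as in the paper.
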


\begin{prop}
  \label{Prop:Tightness}
  $\{\wt{Q}^n:n \ge 1\}$ is $C$-tight in $\D$.
\end{prop}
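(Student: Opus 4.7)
Since $\wt{Q}^n(t) = (\wt{X}^n(t))^+$ and $x \mapsto x^+$ is $1$-Lipschitz, it suffices to establish $C$-tightness of $\wt{X}^n$ in $D$. My plan is to substitute Proposition \ref{Prop:F} into the representation \eqref{Eq:sF}, derive a self-consistent integral bound on $\wt{X}^n$, and bootstrap it via localization to obtain a tight uniform envelope for $\wt{Q}^n$.

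First, applying parts (a)--(d) of Proposition \ref{Prop:F} in \eqref{Eq:sF} and writing $\wt Z^n = -(\wt X^n)^-$, the representation reduces to
\begin{equation*}
\wt X^n(t) = \wt X^n(0) - \beta^n t + n^{1/4}\int_0^t (\wt X^n(s))^- ds + \wt V^n(t) + \varepsilon^n(t),
\end{equation*}
with $\varepsilon^n \Ra 0\eta$ in $D$. The initial value $\wt X^n(0) \Ra X_0 \ge 0$ by \eqref{Ass:Ib}, so tightness at time $0$ is free. Two nontrivial terms remain. For $\wt V^n$, I would perform a second-order Taylor expansion of $1 - e^{-\theta w^n}$ in \eqref{Eq:V} and decompose $Z^n_2 = Z^n - Z^n_1$ so that the leading $O(n\theta w^n)$ contributions cancel (this is the same cancellation that produces the $-\theta^2 x_F^2/2$ drift in the IVP \eqref{Eq:FODE}), yielding the pathwise bound $|\wt V^n(t)| \le C\int_0^t (\wt w^n(s))^2 ds$. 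Combining \eqref{Eq:sssc} with Proposition \ref{Prop:F}(a,d) gives $\wt w^n = \wt Q^n + o_P(1)$ uniformly on compact intervals, so that $|\wt V^n(t)| \le C\int_0^t (\wt Q^n(s))^2 ds + o_P(1)$. For the reflecting term $n^{1/4}\int_0^t (\wt X^n)^- ds$, a separate argument based on the diffusion-scale representation \eqref{Eq:XD} would show $(\wt X^n)^- = O_P(n^{-1/4})$, whence $n^{1/4}\int_0^T (\wt X^n)^-(s)\,ds = O_P(1)$.

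Combining these ingredients I arrive at a self-consistent integral inequality
\begin{equation*}
\sup_{t \le T} \wt Q^n(t) \le \xi_n + C\int_0^T (\wt Q^n(s))^2 ds, \quad \xi_n = O_P(1).
\end{equation*}
To bootstrap tightness, I would introduce the stopping time $\tau^n_M := \inf\{t : \wt Q^n(t) > M\} \wedge T$ and apply a nonlinear (Bihari-type) Gronwall estimate on $[0,\tau^n_M]$ to bound $\wt Q^n$ by a deterministic function of $\xi_n$; choosing $M$ large then forces $P(\tau^n_M < T) \to 0$, yielding tightness of $\sup_{t\le T}\wt Q^n(t)$. The modulus of continuity of $\wt X^n$ over $[s,s+\delta]$ would then be bounded by $|\beta^n|\delta + C\delta\sup_t \wt Q^n(t)^2$ plus the increments of the reflecting term and of $\varepsilon^n$, all of which vanish as $\delta \to 0$ uniformly in $n$. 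Since the jumps of $\wt Q^n$ are of size $n^{-3/4} \to 0$, $C$-tightness follows from the standard criterion.

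The main obstacle is the self-referential quadratic feedback through $\wt V^n$: linear Gronwall is unavailable, so the localization/nonlinear-Gronwall combination must be implemented carefully to avoid circularity. A secondary difficulty is the claim $(\wt X^n)^- = O_P(n^{-1/4})$, which requires a separate comparison argument (most naturally with the Halfin--Whitt diffusion-scaled process from Theorem \ref{ThFCLT2}) before tightness of $\wt Q^n$ itself is known.
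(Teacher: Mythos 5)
Your high-level skeleton---substitute Proposition \ref{Prop:F} into \eqref{Eq:sF}, isolate the quadratic contribution of $\wt V^n$, control the reflecting term---is the right shape, but two of the load-bearing steps are not established and would not go through as sketched, and they are precisely the two steps the paper handles very differently.

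\textbf{The quadratic feedback is not bootstrapped in the paper; stochastic boundedness of $\wt Q^n$ is established beforehand by coupling.} Your proposed inequality $\sup_{t\le T}\wt Q^n(t)\le\xi_n+C\int_0^T(\wt Q^n)^2\,ds$ discards a crucial sign: the cancellation you invoke (``the same cancellation that produces $-\theta^2 x_F^2/2$'') makes $\wt V^n$ asymptotically \emph{negative}, i.e.\ the quadratic term is a stabilizing drift, not a growth term. A one-sided bound $|\wt V^n|\le C\int(\wt Q^n)^2$ throws this away and forces you into Bihari--Gronwall, which only gives control up to a random blowup time of order $1/(C\xi_n)$; for fixed $T$ this does not yield tightness of $\sup_{t\le T}\wt Q^n$ without further work, and the localization does not close the argument by itself. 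The paper avoids this entirely: Lemma \ref{Lem:SBQ} proves stochastic boundedness of $\wt Q^n$ by coupling the $M/M_{pc}/n+M_{pc}$ system with a degenerate system in which patience is infinite. In that comparison system the drift has no quadratic term, so the reflecting-map bound is elementary (linear), and stochastic order then transfers the bound to $\wt Q^n$. Lemma \ref{Lem:SBw} and Lemma \ref{Lem:SBXZ} similarly give stochastic boundedness of $\wt w^n$ and $\wt Z^n_1-\wt Z^n_0$ in advance, so that by the time Proposition \ref{Prop:Tightness} is proved there is no self-referential inequality at all.

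\textbf{The reflecting integral is not shown to be $O_P(1)$; the paper never needs that.} Your plan requires $(\wt X^n)^-=O_P(n^{-1/4})$ to conclude $n^{1/4}\int_0^T(\wt X^n)^-\,ds=O_P(1)$, and you acknowledge that establishing this before Theorem \ref{ThFCLT2} is circular; it is, and Theorem \ref{ThFCLT2} is for fixed time intervals while here time is rescaled by $n^{1/4}$, so the two scalings do not compose directly. The paper sidesteps this by a Skorohod-problem--type increment estimate: using the representation $\wt X^n(t)=\xi^n(t)-n^{1/4}\int_0^t\wt X^n(s)\wedge0\,ds+\varepsilon^n(t)$ and conditioning on whether $\wt Q^n$ hits $0$ on $[s,t)$, the reflecting integral vanishes on excursion pieces where $\wt Q^n>0$, and splitting at the first and last zero yields
$v_\tau(\wt Q^n,\delta)\le 2v_\tau(\xi^n,\delta)+2\|\varepsilon^n\|_\tau$
without ever estimating the integral's size. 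Combined with the Lipschitz-in-time bound on $\xi^n$ (which uses $\|\wt w^n\|$ and $\|\wt Z^n_1-\wt Z^n_0\|$ being $O_P(1)$), this gives the modulus-of-continuity estimate and hence $C$-tightness via the standard Billingsley criterion. To repair your proof you would need to replace the Bihari step by an \emph{a priori} bound (e.g.\ the coupling of Lemma \ref{Lem:SBQ}) and replace the claim on $n^{1/4}\int(\wt X^n)^-\,ds$ by the excursion argument above.
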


\begin{prop}
  \label{Prop:V}
  As $n\tinf$ 
  \begin{equation}  \label{Eq:VX}
    \wt{V}^n(\cdot)+\frac{\theta^2}{2}\int^\cdot_0(\wt{Q}^n(s))^2ds \Ra 0\eta \mbox{~ in }\D. 
  \end{equation}
\end{prop}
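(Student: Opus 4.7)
The plan is to decompose $V^n = A_1 - A_2$, where
\begin{equation*}
A_1(t) := \int_{T_0^n \wedge t}^t (\theta w^n(s))\wedge(t-s) Z_2^n(s) ds, \qquad A_2(t) := \int_0^t \lambda^n(1 - e^{-\theta w^n(s)}) ds,
\end{equation*}
and to Taylor-expand each piece in powers of $w^n$. Tightness of $\wt Q^n$ (Proposition \ref{Prop:Tightness}), together with \eqref{Eq:sssc} and Proposition \ref{Prop:F}, yields $\wt w^n = O_P(1)$, i.e., $w^n = O_P(n^{-1/4})$; a second-order expansion then suffices, with higher-order terms contributing $o_P(1)$ after scaling. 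The structure is that the $O(n^{1/4})$ leading-order contributions from $A_1$ and $A_2$ cancel, leaving the desired quadratic residual at $O(1)$.

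For $A_2$, the expansion $1 - e^{-\theta w^n(s)} = \theta w^n(s) - (\theta w^n(s))^2/2 + O(w^n(s)^3)$ together with the substitution $s = n^{1/4}u$ gives
\begin{equation*}
n^{-3/4} A_2(n^{1/4}t) = \frac{\lambda^n}{n^{3/4}} \theta \int_0^t \wt w^n(u) du - \frac{\lambda^n}{n}\cdot \frac{\theta^2}{2} \int_0^t \wt w^n(u)^2 du + o_P(1).
\end{equation*}
For $A_1$, I would first show that replacing $(\theta w^n(s))\wedge(n^{1/4}t - s)$ by $\theta w^n(s)$ costs only $o_P(n^{3/4})$, since the two differ on an $s$-interval of length $O_P(n^{-1/4})$ with integrand bounded by $O_P(n^{3/4})$. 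Then, since $w^n(s) > 0$ forces $Q^n(s) > 0$ and hence $Z_2^n(s) = n - Z_1^n(s)$ on the support of the integrand, I get
\begin{equation*}
n^{-3/4} A_1(n^{1/4}t) = n^{1/4} \theta \int_{\wt T_0^n}^t \wt w^n(u) du - \theta \int_{\wt T_0^n}^t \wt w^n(u) \wt Z_1^n(u) du + o_P(1).
\end{equation*}
Subtracting, the $O(n^{1/4})$ pieces collapse to $-n^{1/4}\theta \int_0^{\wt T_0^n}\wt w^n du + n^{1/4}(1 - \lambda^n/n)\theta\int_0^t \wt w^n du$, both of which are $o_P(1)$ (by $T_0^n \to 0$ from Proposition \ref{Prop:F}(a) and $1-\lambda^n/n = O(n^{-1/2})$ from \eqref{Square root}), yielding
\begin{equation*}
\wt V^n(t) = -\theta \int_0^t \wt w^n(u) \wt Z_1^n(u) du + \frac{\theta^2}{2} \int_0^t \wt w^n(u)^2 du + o_P(1).
\end{equation*}

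The remaining step is the uniform identification $\wt Z_1^n \approx \theta \wt w^n$. From \eqref{Eq:ssc}, the indicator $1\{\theta w^n(s) + s > t\}$ is supported on the thin layer $s \in (t - \theta w^n(t), t)$ (modulo small oscillations of $w^n$), and since $Z_2^n \approx n$ on this layer we get $Z_1^n(t) \approx n \cdot \theta w^n(t)$, i.e., $\wt Z_1^n \approx \theta \wt w^n$; the contributions of $U_1^n$ and $Z_0^n$ are negligible by Proposition \ref{Prop:F}(a),(c). Plugging this in gives $\wt V^n(t) = -\frac{\theta^2}{2}\int_0^t \wt w^n(u)^2 du + o_P(1)$. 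A Taylor expansion of \eqref{Eq:sssc}, combined with Proposition \ref{Prop:F}(a),(d), then yields $\int_0^t(\wt w^n(u)^2 - \wt Q^n(u)^2) du \Ra 0\eta$ (the $L^2$ error is controlled via the bound $|\wt w^n - \wt Q^n| \le |\wt U_2^n| + \wt Q_0^n + O_P(n^{-1/4})$ and boundedness of $\wt w^n + \wt Q^n$), completing the proof.

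The main obstacle will be the uniform identification $\wt Z_1^n \approx \theta \wt w^n$: the implicit indicator in \eqref{Eq:ssc} depends on the possibly-discontinuous $w^n$ over an $O(n^{-1/4})$-thin layer near $t$, so establishing the approximation uniformly in $t$ requires simultaneously controlling the oscillations of $w^n$ on such small time scales and the fluctuations of $Z_2^n$ from $n$. This will likely require an auxiliary tightness argument for $\wt Z_1^n$ together with martingale/maximal inequalities bounding the short-time increments of $Z^n_2$.
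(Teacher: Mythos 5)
Your plan coincides with the paper's own proof: the Taylor expansion of $V^n$ (Lemma \ref{Lem:V}), the identity $\wt Z^n_1 \wt w^n = -\wt Z^n_2 \wt w^n$ from $\wt Z^n\wt w^n = 0\eta$, the uniform identification $\wt Z^n_1 - \wt Z^n_0 \approx \theta\wt w^n$ via \eqref{Eq:ss}, and the replacement $\wt w^n \approx \wt Q^n$ via \eqref{Eq:sssc} are exactly the steps the paper takes. The ``main obstacle'' you flag is indeed the crux, and the paper resolves it precisely by the two tools you anticipate --- the $C$-tightness of $\wt Q^n$ (Proposition \ref{Prop:Tightness}) controls the oscillations of $w^n$ on the thin layer (yielding $\Delta^n\Ra0$), while the martingale representation of $D^n$ together with Proposition \ref{Prop:F}(b) and Lemma \ref{Lem:SBXZ} controls its short-time increments --- though note that only stochastic boundedness of $\wt Z^n_1 - \wt Z^n_0$, not tightness of $\wt Z^n_1$, is required.
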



For a given $\phi\in \D_0$, we say that $(y,\psi)\in \D^2$ is a solution to the Skorohod problem if
\begin{align}
  \label{Eq:Skorohod}
  &y=\phi+\psi;\\
  &\int^t_0y(s)d\psi(s)=0,\mbox{ for all }t\ge0;\nonumber\\
  &y\ge0,\; \psi(0)=0\mbox{ and } \psi\mbox{ is non-decreasing}.\nonumber
\end{align}
It is well-known (e.g., see \cite[Theorem 6.1]{chen2013fundamentals}) that the Skorohod problem in \eqref{Eq:Skorohod} admits a unique solution $(y,\psi)$,
and that $h : D_0 \ra D^2$, mapping the input $\phi$ to that solution, namely, the map defined via
\bequ \label{hmap}
h(\phi) := (y,\psi)
\eeq
is (Lipschitz) continuous in the $J_1$ topology; see Theorems 13.4.1 and 13.5.1 in \cite{whitt2002stochastic}.
(Continuity of $h$ is proved only in the uniform topology in \cite{chen2013fundamentals}.) Further, if $\phi$ is continuous, then so is $h(\phi)$.



\begin{proof}[Proof of Theorem \ref{ThLLF2}]
  Due to Proposition \ref{Prop:Tightness}, any subsequence of $\{\wt{Q}^n : n \ge 1\}$ has a further weakly converging subsequence in $\D$.
Let $\{\wt{Q}^k : k \ge 1\}$ denote such a converging subsequence, and let $Q$ denote its weak limit.
Let $\Phi^k\in D$ and $\Phi\in C$ be defined via
\begin{align}
  \label{Eq:Phik}
  &\Phi^k(t)=\wt{X}^k(t)+k^{1/4}\int^t_0\wt{Z}^k(s)ds,\\
  \label{Eq:Phix1}
  &\Phi(t)=X_0-\beta t - \frac{\theta^2}{2}\int^t_0 Q^2(s)ds.
\end{align}
By \eqref{Eq:sF}, Propositions \ref{Prop:F} and \ref{Prop:V} and the continuous-mapping theorem, it holds that
\begin{equation*}
  \Phi^k-\wt{X}^k(0)+\beta^k\eta+\frac{\theta^2}{2}\int^\cdot_0(\wt{Q}^k(s))^2ds\Ra 0\eta \qinq D \qasq k\tinf.
\end{equation*}
The convergence $\wt{Q}^k\Ra Q$ in $\D$ and the continuous mapping theorem together give
\begin{equation*}
  (\wt{Q}^k,\Phi^k)\Ra(Q,\Phi)\mbox{ in }\D^2 \mbox{ as } k \tinf.
\end{equation*}

We need the following lemma, the proof of which appears at the end of this section.
Recall $h$ from \eqref{hmap}.
\begin{lemma}
  \label{Lem:Skorohod}
$(\Phi^k, \wt{X}^k,\wt{X}^k-\Phi^k)\Ra (\Phi, h(\Phi))$ as $k\tinf$ in $\D^3$.
\end{lemma}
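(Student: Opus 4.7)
The plan is to identify $(\wt{Q}^k, \bar{\psi}^k)$, with $\bar{\psi}^k := \wt{X}^k - \Phi^k$, as the exact Skorokhod reflection of $\Phi^k - \wt{Z}^k$, and then pass to the limit using Lipschitz continuity of the Skorokhod map $h$. From \eqref{Eq:Phik} and $Z^k = X^k \wedge n^k$, the process $\bar{\psi}^k(t) = -k^{1/4}\int_0^t \wt{Z}^k(s)\,ds$ is nondecreasing with $\bar{\psi}^k(0) = 0$, because $\wt{Z}^k \le 0$ pathwise. Using $\wt{X}^k = \wt{Q}^k + \wt{Z}^k$, the defining equation $\wt{X}^k = \Phi^k + \bar{\psi}^k$ rearranges to $\wt{Q}^k = (\Phi^k - \wt{Z}^k) + \bar{\psi}^k$. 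Since $\wt{Q}^k \wt{Z}^k \equiv 0$ pathwise (no customer waits when an agent is idle), $\int_0^t \wt{Q}^k(s)\,d\bar{\psi}^k(s) = -k^{1/4}\int_0^t \wt{Q}^k(s)\wt{Z}^k(s)\,ds = 0$, while $(\Phi^k - \wt{Z}^k)(0) = \wt{Q}^k(0) \ge 0$. All four defining conditions of \eqref{Eq:Skorohod} therefore hold, so $(\wt{Q}^k, \bar{\psi}^k) = h(\Phi^k - \wt{Z}^k)$ for every $k$.

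Given the auxiliary fact that $\wt{Z}^k \Ra 0\eta$ in $\D$, the rest is a short continuous-mapping argument. The already-obtained convergence $\Phi^k \Ra \Phi$ combined with this fact implies $(\Phi^k, \Phi^k - \wt{Z}^k) \Ra (\Phi, \Phi)$ in $\D^2$, since convergence in distribution to a deterministic limit implies joint convergence. As $\Phi$ is continuous by \eqref{Eq:Phix1}, Lipschitz continuity of $h$ in the $J_1$ topology then yields $(\Phi^k, \wt{Q}^k, \bar{\psi}^k) = (\Phi^k, h(\Phi^k - \wt{Z}^k)) \Ra (\Phi, h(\Phi))$ in $\D^3$. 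Replacing $\wt{Q}^k$ by $\wt{X}^k = \wt{Q}^k + \wt{Z}^k$ in the middle coordinate (again using $\wt{Z}^k \Ra 0\eta$) and recalling $\bar{\psi}^k = \wt{X}^k - \Phi^k$ by definition, we obtain $(\Phi^k, \wt{X}^k, \wt{X}^k - \Phi^k) \Ra (\Phi, h(\Phi))$ in $\D^3$, as required.

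The main obstacle is establishing the auxiliary claim $\wt{Z}^k \Ra 0\eta$ in $\D$, equivalently $\sup_{0 \le u \le k^{1/4}T}(n^k - X^k(u))^+ = o_P(k^{3/4})$ for every $T > 0$. Theorem \ref{ThFCLT} gives the sharp $O_P(\sqrt{k})$ control on excursions of $X^k$ below $n^k$ over compact intervals $[0, T]$, but upgrading this uniformly across the growing window $[0, k^{1/4}T]$ requires bounding both the maximum size and the number of idle-server excursions. My plan is to exploit that while $X^k \le n^k$ no customer is waiting, so during each idle-server excursion the $M/M_{pc}/n^k+M_{pc}$ dynamics coincide sample-pathwise with those of the Erlang-C queue (the correlation is inactive); this reduces the problem to controlling a maximum of at most polynomially-in-$k$ many Gaussian-tailed excursions of a Halfin--Whitt diffusion started at the boundary $X=n^k$, which a standard union bound handles. $C$-tightness from Proposition \ref{Prop:Tightness}, together with the sample-path representation in Section \ref{secTransient}, supplies the complementary control needed to rule out bad subsequential behavior of $\wt{X}^k$ on the negative side.
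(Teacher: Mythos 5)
Your structural reduction is exactly right and matches the paper: you correctly observe that $\wt Z^k=\wt X^k\wedge 0\le 0$, so $\bar\psi^k=-k^{1/4}\int_0^\cdot\wt Z^k(s)\,ds$ is nondecreasing with $\bar\psi^k(0)=0$, verify the complementarity condition using $\wt Q^k\wt Z^k\equiv 0$, and conclude $(\wt Q^k,\bar\psi^k)=h(\Phi^k-\wt Z^k)$. You also correctly isolate the one nontrivial ingredient: the auxiliary claim $\wt Z^k\Ra 0\eta$ in $\D$, after which the $J_1$-Lipschitz continuity of $h$ and the continuity of $\Phi$ finish the proof.

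The gap is in your plan for that auxiliary claim, which both rests on a flawed premise and misses the mechanism the paper uses. First, the premise: during an idle-server excursion the dynamics do \emph{not} coincide with Erlang-C. Even when $X^k(t)\le n^k$, some of the customers currently in service arrived earlier, waited in queue, and are therefore still in phase-$1$ service at time $t$; those $Z^k_1(t)$ customers contribute zero departure intensity, so the instantaneous departure rate is $Z^k_2(t)<Z^k(t)=X^k(t)$ rather than $X^k(t)$. Pathwise identification with the Erlang-C process fails, and your plan to reduce the problem to ``Gaussian-tailed excursions of a Halfin–Whitt diffusion'' does not go through as written. Second, even if the coupling worked, the union-bound strategy over a polynomially growing number of excursions on the stretched window $[0,k^{1/4}T]$ is not developed, and would still require a quantitative bound on excursion counts that is nowhere established.

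The paper avoids all of this with a direct argument that exploits exactly the object you already wrote down: from \eqref{Eq:Phik} (equivalently \eqref{EqXve}), $\wt X^k$ satisfies $\wt X^k(t)-\wt X^k(s)=\Phi^k(t)-\Phi^k(s)-k^{1/4}\int_s^t\wt X^k(u)\wedge 0\,du$. The restoring drift $-k^{1/4}\,\wt X^k\wedge 0$ is of order $k^{1/4}\epsilon$ as soon as $\wt X^k<-\epsilon$, while $\Phi^k$ is close (in sup norm over $[0,\tau]$) to the $C^1$ function $\Phi$ whose derivative is bounded on compacts. Hence a downward excursion from $-\epsilon$ to $-3\epsilon$ is impossible unless either $\|\Phi^k-\Phi\|_\tau$ is bounded away from zero or $\|\phi\|_\tau\ge k^{1/4}$ (for $\phi$ the derivative of $\Phi$), both of which are asymptotically null events. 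Together with $X_0\ge 0$, this gives $\wt X^k\wedge 0\Ra 0\eta$ with no coupling, no excursion counting, and no reliance on Theorem \ref{ThFCLT}. You would do well to replace your auxiliary-claim sketch by this drift-domination argument; the rest of your proof then closes as you intended.
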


Denote $(X,\Psi):=h(\Phi)$, so that $X=\Phi+\Psi$ and $X\ge0$ w.p.1.\
Since $h$ maps $C_0$ to $C^2$ and $\Phi\in C_0$, we have $(X, \Psi) \in C^2$.
$\wt{X}^k\Ra X$ and the continuous mapping theorem imply that
\begin{equation*}
  \wt{Q}^k=\wt{X}^k\vee0\Ra X\vee0, \qinq \D \qasq n\tinf,
\end{equation*}
and thus $Q=X\vee0=X$, w.p.1.\
In particular, \eqref{Eq:Phix1} simplifies to
\begin{equation}
  \label{Eq:Phix}
  \Phi(t)= X_0-\beta t - \frac{\theta^2}{2}\int^t_0 X^2(s)ds.
\end{equation}

It follows from \eqref{Eq:Skorohod} and the fact that $(X,\Psi)= h(\Phi)$ that $\Psi$ is a non-decreasing process with $\Psi(0)=0$, such that
\begin{equation*}
  X=\Phi+\Psi\qandq\int^\cdot_01\{X(s)>0\}d\Psi(s)=0\eta.
\end{equation*}
Hence, conditional on $\{X_0=x_0\}$, for $x_0\ge0$, and using \eqref{Eq:Phix}, $(y,\psi):=(X,\Psi)$ satisfies the following 
\begin{align}
  \label{Eq:GSkorohod}
  &y(t)=x_0-\beta t-\frac{\theta^2}{2}\int^t_0y^2(s)ds+\psi,\\
  &\int^t_01\{y>0\}d\psi=0,\nonumber\\
  &(y,\psi)\in C^2,\;y\ge0,\;\psi(0)=0,\;\mbox{ and }\psi\mbox{ is a non-decreasing process.}\nonumber
\end{align}
The next lemma is proved at the end of this section.
\begin{lemma}
  \label{Lem:GSkorohod}
  There exists a unique solution $(y,\psi)=(x_F, 0\eta)$ to \eqref{Eq:GSkorohod} for any input $x_0\ge0$ and $\beta\le 0$,
  where $x_F$ is the unique solution to \eqref{Eq:FODE}.
Further, the function $g:\RR_+\ra C^2$, mapping $x_0$ to $(y,\psi)$, is continuous.
\end{lemma}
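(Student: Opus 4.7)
The plan is to prove existence by direct verification, uniqueness by decomposing the positivity set of $y$ and exploiting the complementarity condition on each component, and continuity from the closed-form formulas \eqref{Eq:FXSolu}--\eqref{Eq:FXSolu2}. For existence, take $(y,\psi) = (x_F, 0\eta)$: since $x_F$ approaches $x^* \ge 0$ monotonically from $x_0 \ge 0$ it is non-negative; differentiating \eqref{Eq:FODE} recovers the integral equation in \eqref{Eq:GSkorohod} with $\psi \equiv 0$; and the remaining requirements (complementarity, $\psi(0)=0$, monotonicity, continuity) hold trivially. Continuity of $g$ will follow at the end from the joint continuity of \eqref{Eq:FXSolu}--\eqref{Eq:FXSolu2} in $(x_0,t)$, uniform on compacts.

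For uniqueness, let $(y,\psi)$ be any solution of \eqref{Eq:GSkorohod} and decompose the open set $\{t \ge 0 : y(t) > 0\}$ into its maximal connected components $(a_i, b_i)$. The complementarity condition forces $\psi$ to be constant on each such component, so differentiating the integral equation recovers the ODE $\dot y = -\beta - (\theta^2/2) y^2$ on $(a_i, b_i)$. Continuity of $y$ gives $y(a_i^+) = 0$ whenever $a_i > 0$, and the ODE solution starting from $0$ either stays at $0$ (when $\beta = 0$) or increases strictly toward $x^*$ (when $\beta < 0$); in either case it does not return to $0$ in finite time, so each such component is unbounded. Hence the positivity set of $y$ is either empty or of the form $(a_1, \infty)$ for some $a_1 \ge 0$. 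If $x_0 > 0$ then $a_1 = 0$ by continuity, the ODE gives $y = x_F$, and since $\psi$ is constant on $(0, \infty)$ with $\psi(0) = 0$ we get $\psi \equiv 0$. If $x_0 = 0$ and $\beta < 0$, then evaluating the integral equation on the zero-segment $[0, a_1]$ yields $\psi(t) = \beta t$; when $a_1 > 0$ this is strictly negative for $t > 0$, contradicting $\psi(0) = 0$ and monotonicity, so $a_1 = 0$, $\psi \equiv 0$, and $y = x_F$.

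The remaining and main obstacle is the degenerate case $\beta = 0$ with $x_0 = 0$, in which $0$ is a semi-stable equilibrium of the ODE and the above argument does not by itself rule out a positive excursion of $y$ (the relation $\psi(t) = \beta t$ on a zero-segment degenerates to $\psi \equiv 0$, yielding no contradiction). I plan to exclude this via explicit integration: on any putative positivity component $(a,b)$ with $y(a^+) = 0$, separating variables in $\dot y = -(\theta^2/2) y^2$ yields $1/y(t) - 1/y(t_1) = (\theta^2/2)(t - t_1)$ for $a < t_1 < t < b$, so letting $t_1 \downarrow a$ forces $1/y(t_1) \to \infty$ while the right-hand side remains bounded, a contradiction. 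Hence $y \equiv 0 = x_F$ and, from the integral equation with $\beta = x_0 = 0$, $\psi \equiv 0$, completing uniqueness in all cases; continuity of $g$ then follows immediately from inspection of \eqref{Eq:FXSolu}--\eqref{Eq:FXSolu2}.
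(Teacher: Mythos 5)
Your proof is correct, but it takes a genuinely different route from the paper's. The paper first establishes the pointwise comparison $y_1 \ge x_F$ for any solution $(y_1,\psi_1)$ of \eqref{Eq:GSkorohod}, via the inequality
\begin{equation*}
y_1(t)-x_F(t) \ge -\frac{\theta^2}{2}\int_0^t (y_1(s)+x_F(s))(y_1(s)-x_F(s))^+\,ds
\end{equation*}
and a Gronwall-type argument, then observes that $y_1 \ge x_F > 0$ for $t>0$ (in the non-degenerate cases) forces $\psi_1 \equiv 0$ via the complementarity condition, and finally disposes of the degenerate case $\beta = 0,\; x_0 = 0$ by noting that $\dot y_1 < 0$ wherever $y_1 > 0$, hence $y_1 \equiv 0$. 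You instead decompose $\{y > 0\}$ into maximal intervals, exploit complementarity to localize the ODE on each interval, and rule out any positive excursion starting from level $0$---by strict monotone increase toward $x^*$ when $\beta < 0$, and by separation of variables (the $1/y$ blow-up) when $\beta = 0$. Both arguments succeed; yours is more elementary and more transparent (it avoids invoking Gronwall for a functional of $(y_1 - x_F)^{\pm}$, which as written in the paper is not the textbook Gronwall form and requires some unpacking), at the cost of a case analysis. One small wording nit: when you write ``$1/y(t_1) \to \infty$ while the right-hand side remains bounded,'' the cleaner statement is that $1/y(t) = 1/y(t_1) + (\theta^2/2)(t-t_1)$ has a divergent right side but a fixed finite left side as $t_1 \downarrow a$; the underlying contradiction is the same. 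You should also note explicitly that the component containing $0$ (when $x_0 > 0$) has the form $[0, b)$ rather than an open interval, though this does not affect the argument.
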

It follows that, conditional on $\{X_0=x_0\}$, $X=x_F$ w.p.1, so that $(X, Q, Z) = (x_F, x_F \vee 0, 0\eta)$ w.p.1.
The uniqueness of the limit implies the stated weak convergence.
\end{proof}

\begin{proof}[Proof of Lemma \ref{Lem:Skorohod}]
  For fixed $\tau>0$ and $\epsilon>0$, and for each $k \ge 1$ such that $k^{-3/4}<\ep$, define the event
\begin{equation*}
  \Xi^k \equiv \Xi^k(\ep, \tau) := \{-\ep < \wt X^k(0), ~ \inf_{t\le \tau}\wt X^k(t)\wedge0 < -3\epsilon\}.
\end{equation*}
We first show that $\Xi^k$ is an asymptotically null event in the sense that $P(\Xi^k)\rightarrow0$ as $n\tinf$.
To this end, let $t^k_1$ be such that $\wt X^k(t^k_1)<-3\epsilon$. For
$$t^k_2 :=\sup\{t<t_1^k:\wt X^k(t)\ge-\epsilon\}$$
it holds that $\wt X^k(t^k_2-) \ge -\epsilon$.
As $X^k$ is a pure jump process with jumps of size $1$ and $-1$ w.p.1,
\begin{equation*}
  \wt X^k(t_2^k)\ge \wt X^k(t_2^k-)-k^{-3/4}>-2\epsilon.
\end{equation*}
Let $\phi:=-\beta-\theta^2(Q)^2/2\in D$ so that $\Phi(t)=\Phi(0)+\int^t_0\phi(s)ds$ for $t\ge0$.
\begin{align*}
  -\epsilon>&\wt X^k(t_1^k)-\wt X^k(t_2^k)=\Phi^k(t_1^k)-\Phi^k(t_2^k)-k^{1/4}\int^{t_1^k}_{t_2^k}\wt X^k(s)\wedge0ds\nonumber\\
  \ge&\Phi(t^k_1)-\Phi(t^k_2)-2\|\Phi^k-\Phi\|_\tau+k^{1/4}(t_1^k-t^k_2)\epsilon\nonumber\\
  \ge&-(t^k_1-t^k_2)\|\phi\|_\tau - 2\|\Phi^k-\Phi\|_\tau + k^{1/4}(t_1^k-t^k_2)\epsilon.
\end{align*}
The strict inequality above can hold if either $\|\phi\|_\tau \ge k^{1/4}$ or $\|\Phi^k-\Phi\|_\tau \ge \epsilon/2$,
implying that
\begin{equation*}
  \Xi^k\subseteq\{\|\Phi^k-\Phi\|_\tau \ge \epsilon/2\}\cup\{\|\phi\|_\tau \ge k^{1/4}\}.
\end{equation*}
As both events on the right-hand side are asymptotically null under the probability measure $P$, we conclude that $P(\Xi^k)\rightarrow0$ as $n\tinf$.

Next, $X_0\ge0$ implies that $P(\wt X^k(0)>-\ep)\ra 1$.
Together with the fact that $P(\Xi^k)\ra0$, we have
\begin{equation*}
  P(\inf_{t\le\tau}\wt X^k(t)\wedge0<-3\ep)\ra0, \qforallq\ep>0\qandq\tau>0 
\end{equation*}
and thus
\bequ \label{YnLim}
\wt X^k\wedge0\Rightarrow 0\eta \qinq \D \qasq n\tinf.
\eeq
It is easy to check that $\Phi^k-\wt X^k\wedge0\in D_0$ and that
\begin{equation*}
  \left(\wt X^k\vee0, k^{1/4}\int^\cdot_0\wt X^k(s)\wedge0ds\right)=h(\Phi^k-\wt X^k\wedge0).
\end{equation*}
Now, due to \eqref{YnLim}
\begin{equation*}
  \Phi^k-\wt X^k\wedge0\Rightarrow \Phi \qinq \D \qasq n\tinf,
\end{equation*}
and so
\begin{equation*}
  (\wt X^k\vee0,\wt X^k-\Phi^k)\Rightarrow h(\Phi)\qinq \D^2 \qasq n\tinf.
\end{equation*}
Thus
\begin{equation*}
  (\Phi^k,\wt X^k\vee0,\wt X^k-\Phi^k)\Rightarrow (\Phi,h(\Phi))\qinq \D^3 \qasq n\tinf.
\end{equation*}
Writing $\wt X^k=\wt X^k\wedge0+\wt X^k\vee0$ and employing \eqref{YnLim} gives the stated limit.
\end{proof}

\begin{proof}[Proof of Lemma \ref{Lem:GSkorohod}]
First, it follows from the standard theory of ordinary differential equation that \eqref{Eq:FODE} has a unique solution. (It is easy to check that
$x_F$ in \eqref{Eq:FXSolu} and \eqref{Eq:FXSolu2} satisfies \eqref{Eq:FODE} when $\beta < 0$ and $\beta = 0$, respectively.)
Then $(x_F, 0\eta)$ trivially satisfies \eqref{Eq:GSkorohod}, and it remains to show that it is the unique element in $C^2$ to have this property.

To this end, let $(y_1,\psi_1)\in C^2$ be a solution to \eqref{Eq:GSkorohod}.
The fact that $\psi_1\ge0$ implies that
\begin{align*}
  y_1(t)-x_F(t)&=-\frac{\theta^2}{2}\int^t_0(y_1^2(s)-x_F^2(s))ds+\psi_1(t)\\
  &\ge -\frac{\theta^2}{2}\int^t_0(y_1(s)+x_F(s))(y_1(s)-x_F(s))^+ds.
\end{align*}
and
\begin{equation*}
  (y_1(t)-x_F(t))^-\le (y_1(t)-x_F(t))^++\frac{\theta^2}{2}\int^t_0(y_1(s)+x_F(s))(y_1(s)-x_F(s))^+ds,\qforallq t\ge0.
\end{equation*}
By Gronwall's inequality, for each $t$, there is a $c_t\ge0$ such that
\begin{equation}
  \label{EqyxF}
  (y_1(t)-x_F(t))^-\le c_t (y_1(t)-x_F(t))^+.
\end{equation}
As $a^->0$ implies that $a^+=0$ for $a\in\RR$, \eqref{EqyxF} implies that $(y_1-x_F)^-=0$, so that $y_1\ge x_F$.
Therefore, if either $\beta<0$ or $x_0 > 0$, we have $y_1(t)\ge x_F(t)>0$ for all $t>0$.
By \eqref{Eq:GSkorohod}, we immediately have $\psi_1=0\eta$, and thus $y_1$ solves \eqref{Eq:FODE} and must equal $x_F$.

Next, consider the case $\beta=0$ and $x_0=0$. For $t$ such that $y_1(t)>0$, we have
\begin{equation*}
  dy_1(t)/dt=-\theta^2y_1(t)^2/2<0,\qforallq t\ge0 \mbox{ such that } y_1(t)>0.
\end{equation*}
Together with $y_1(0)=0$ and $y_1\in C$, we have $y_1=0\eta$, so that $\psi_1=-y_1=0\eta$.

Finally, it follows \eqref{Eq:FXSolu} (or \eqref{Eq:FXSolu2}) and $\psi=0\eta$ that the map $x_0\mapsto(y,\psi)$ is continuous, completing the proof of Lemma \ref{Lem:GSkorohod}.
%
\end{proof}

\subsection{Proofs of Main Results for the Stationary Distributions}
In this section we prove Theorem \ref{ThStationaryF} and Proposition \ref{PropDiffusion0}.
We omit the proof of Theorem \ref{ThStationary} since its Assertion (i) follows immediately from Theorem \ref{ThStationaryF}, and the proof
of Assertion (ii) follows similar arguments to the proof Theorem \ref{ThStationaryF}.
We will need the following two supporting propositions, whose proofs are given in Section \ref{secStationary}.
\begin{prop}
  \label{Prop:DiffusionTightness}
  If $\beta>0$, then $\{\wh{X}^n(\infty) : n \ge 1\}$ is tight in $\RR$.
\end{prop}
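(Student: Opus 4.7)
My plan is to establish uniform $L^1$ bounds on both $\wh X^n(\infty)^+=\wh Q^n(\infty)$ and $\wh X^n(\infty)^-=(n-X^n(\infty))^+/\sqrt n$, from which tightness in $\RR$ follows by Markov's inequality. The two bounds will come from quite different ingredients: the upper tail from Proposition \ref{PropL}(b) combined with Little's law, and the lower tail from rate conservation (flow balance) in the stationary regime.

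For the upper tail, I would start from the pathwise inequality $L^n(t)\ge\sum_{i=1}^{Q^n(t)}\ell^n_i(t)$. A standard sample-path integration shows that a customer whose queue waiting time equals $W^n_k$ contributes $\int_{a_k}^{a_k+W^n_k}(t-a_k)\,dt=(W^n_k)^2/2$ to $\int_0^T\sum_i\ell^n_i(t)\,dt$, so ergodicity (Theorem \ref{ThErgodicity}) yields the stationary identity
\[
E\Big[\sum_{i=1}^{Q^n(\infty)}\ell^n_i(\infty)\Big]=\frac{\lm^n}{2}E[(W^n)^2]\le E[L^n(\infty)]\le M,
\]
where $W^n$ denotes a generic stationary waiting time and $M$ is the constant from Proposition \ref{PropL}(b). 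Combining Little's law $E[Q^n(\infty)]=\lm^n E[W^n]$ (valid with abandonment because every queued customer leaves the queue in finite time) with Cauchy--Schwarz gives $(E[Q^n(\infty)])^2\le(\lm^n)^2 E[(W^n)^2]\le 2M\lm^n$, so $E[\wh Q^n(\infty)]\le\sqrt{2M\lm^n/n}$, which is bounded uniformly in $n$.

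For the lower tail, I would use the flow-balance identity $\lm^n=E[Z^n_2(\infty)]+\theta E[Q^n(\infty)]$, obtained by taking expectations in \eqref{Eq:s}, dividing by $t$, and letting $t\tinf$ under stationarity, using $E[M^n_A(t)]=E[M^n_S(t)]=0$ and the usual localization of $M^n_R$ from Section \ref{secMartRep}. Because $Q^n(t)>0$ forces $Z^n(t)=n$, we have pathwise $(n-X^n)^+=n-Z^n=n-Z^n_1-Z^n_2$; taking expectations at stationarity and using $Z^n_1\ge 0$ yields
\[
E[(n-X^n(\infty))^+]=(n-\lm^n)+\theta E[Q^n(\infty)]-E[Z^n_1(\infty)]\le\sqrt n\,\beta^n+\theta E[Q^n(\infty)].
\]
Dividing by $\sqrt n$ and invoking the upper-tail bound, $E[\wh X^n(\infty)^-]\le\beta^n+\theta E[\wh Q^n(\infty)]$ is bounded uniformly in $n$ since $\beta^n\to\beta$.

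Combining the two bounds gives uniform boundedness of $E[|\wh X^n(\infty)|]$, and Markov's inequality delivers tightness. The main obstacle I foresee is the careful justification of the two stationary identities above --- the sample-path Little's-law identity for $E[\sum_i\ell^n_i(\infty)]$ and the rate-conservation identity --- each of which holds formally from stationarity and the martingale structure of Section \ref{secMartRep}, but must be derived by an averaging argument over long time horizons. The finite expected regenerative cycle length established in the proof of Theorem \ref{ThErgodicity} and the localization used for $M^n_R$ will be the main technical tools required to make these identities rigorous.
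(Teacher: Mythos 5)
Your proof is correct, and it takes a genuinely different route from the paper's.

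The paper sandwiches $X^n(\infty)$ stochastically between the stationary Erlang-A queue (from below, via Lemma \ref{LemCouplingL}) and the stationary Erlang-C queue (from above, via Lemma \ref{LemCompTrick}), and then appeals to explicit Halfin--Whitt moments for Erlang-C and to the $O(n^{-1/2})$ abandonment-probability bound for Erlang-A from \cite{garnett2002designing}. You instead control the two tails intrinsically. The upper tail comes from the chain $\tfrac{\lm^n}{2}E[(W^n)^2]\le E[L^n(\infty)]\le M$ (which is exactly the inequality $G^n\ge (w^n_v(\infty)\wedge T)^2/2$ inside the paper's proof of Proposition \ref{PropL}(a)), followed by Little's law and Cauchy--Schwarz; the lower tail from the rate-conservation identity $\lm^n=E[Z^n_2(\infty)]+\theta E[Q^n(\infty)]$ combined with $(n-X^n)^+=n-Z^n_1-Z^n_2$ and $Z^n_1\ge 0$. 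This avoids Lemma \ref{LemCouplingL} and the external Garnett--Mandelbaum--Reiman estimate entirely, and yields the slightly sharper bound $E[\wh Q^n(\infty)]\le\sqrt{2M\lm^n/n}$. Note, however, that the Erlang-C coupling is still present indirectly, since your bound on $E[L^n(\infty)]$ relies on Proposition \ref{PropL}(b), whose proof invokes Lemma \ref{LemCompTrick} with an Erlang-C system; so the two proofs draw on the same coupling for the upper tail, differing mainly in how it is routed. The technical points you flag are real but manageable with the paper's own tools: the $H=\lm G$ identity is already justified in the proof of Proposition \ref{PropL}(a) via \cite[Chapter 5]{wolff1989stochastic}, and the flow-balance identity follows from the localization of $M^n_R$ in Section \ref{secMartRep} together with $E[Q^n(\infty)]<\infty$ for fixed $n$ (which is guaranteed by the $M/M/\infty$ domination used in the proof of Theorem \ref{ThErgodicity}). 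One small caveat in presentation: when you take expectations across \eqref{Eq:s} under stationarity and divide by $t$, you should make explicit that $E[\int_0^t Z^n_2\,ds]=tE[Z^n_2(\infty)]$ and $E[\int_0^t Q^n\,ds]=tE[Q^n(\infty)]$ by Fubini and stationarity, and that $E[M^n_R(t)]=0$ requires an integrability check rather than following directly from the local-martingale property.
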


\begin{prop}\label{Prop:FluidTightness}
  For any $\beta\in\RR$, $\{\wt{X}^n(\infty) : n \ge 1\}$ is tight in $\RR$. Further, $\wt{X}^n(\infty)\wedge0\Ra 0$ in $\RR$ as $n\tinf$.
\end{prop}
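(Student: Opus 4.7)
For $\beta > 0$, Proposition~\ref{Prop:DiffusionTightness} gives tightness of $\{\wh X^n(\infty)\}$ in $\RR$, and since $\wt X^n(\infty) = n^{-1/4}\wh X^n(\infty)$, both assertions of Proposition~\ref{Prop:FluidTightness} follow immediately, in fact with the stronger conclusion $\wt X^n(\infty) \Ra 0$ in $\RR$. From now on I take $\beta \le 0$.

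For the lower-tail claim $\wt X^n(\infty) \wedge 0 \Ra 0$, it suffices to show $(n - X^n(\infty))^+ = o_P(n^{3/4})$. The key observation is that on $\{X^n(t) < n\}$ the queue is empty and no customer is in phase~$1$, so $X^n$ evolves there as a pure birth--death chain with birth rate $\lambda^n$ and death rate $X^n$. I would dominate $(n-X^n)^+$ stochastically by the finite-state birth--death chain $\bar Y^n$ on $\{0,1,\ldots,n\}$ with birth rate $n-k$ from state $k$ and constant death rate $\lambda^n$, whose stationary distribution has the explicit Erlang-B-like form $\pi_k \propto n(n-1)\cdots(n-k+1)/(\lambda^n)^k$. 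A direct Laplace-type computation using $\lambda^n \ge n - o(\sqrt n)$ yields $E[\bar Y^n(\infty)] = O(\sqrt n)$, so that $(n - X^n(\infty))^+ = O_P(\sqrt n) = o_P(n^{3/4})$ by Markov's inequality.

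The main obstacle is tightness from above. My plan is a second-moment argument combining Proposition~\ref{PropL}(a) with a geometric lower bound on $\sum_i \ell^n_i$. Under FIFO with Poisson input of rate $\lambda^n$, the $i$th customer from the back of the queue has elapsed waiting age distributed (after thinning by abandonment) like the sum of $i-1$ independent $\mathrm{Exp}(\lambda^n)$ inter-arrival gaps; summing should yield
\begin{equation*}
E\left[\sum_{i=1}^{Q^n(\infty)} \ell^n_i(\infty) \,\middle|\, Q^n(\infty)\right] \ge c\,(Q^n(\infty))^2/\lambda^n
\end{equation*}
for some constant $c > 0$ independent of $n$. Combined with the pathwise inequality $L^n \ge \sum_i \ell^n_i$ from \eqref{Ln} and Proposition~\ref{PropL}(a), this yields $E[(Q^n(\infty))^2] \le c^{-1}\lambda^n\,E[L^n(\infty)] = O(n^{3/2})$, and tightness of $\{n^{-3/4} Q^n(\infty)\}$ follows by Chebyshev.

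The hard part is the rigorous conditional lower bound on $E[\sum_i \ell^n_i(\infty) \mid Q^n(\infty)]$: the queued customers are not a generic sample of past arrivals but survivors of the perfect-correlation abandonment process, and \eqref{servTime} couples patience and service non-trivially. I would address this by exploiting the memorylessness of the patience: conditional on $w^n(\infty) = w$ and the arrival process in $(t-w, t]$, the queued customers form a thinned Poisson process with density $\lambda^n e^{-\theta \ell}$ on $[0, w]$, and for $\theta w = o(1)$ (expected since $w^n$ should be $O_P(n^{-1/4})$) this reduces to the uniform baseline, recovering the desired quadratic bound. Should this direct moment route prove intractable, a fallback is a fast-time-scale contradiction argument: rescaling via $\tilde Z^n(s) := \wt X^n(s/M)/M$ should yield convergence to the ODE $\dot z = -\theta^2 z^2/2$, whose explicit solution $2/(2+\theta^2 s)$ forces $\wt X^n(1) = O_P(1)$ regardless of the value $\wt X^n(0) = M$, and combining with stationarity $\wt X^n(1) \deq \wt X^n(\infty)$ yields tightness.
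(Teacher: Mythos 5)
Your approach is genuinely different from the paper's, but it has gaps in both halves.

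For the lower tail, the claim that on $\{X^n(t)<n\}$ ``no customer is in phase $1$'' is false: a customer may have waited in queue, entered service, and still be in phase-$1$ service after the queue has drained, so $Z^n_1(t)>0$ is perfectly compatible with $X^n(t)<n$. The birth--death domination is salvageable (phase-$1$ customers only slow departures, so the actual up-jump rate of $(n-X^n)^+$ at state $k$ is $Z^n_2(t)\le Z^n(t)=n-k$, which is still dominated by the comparison chain's birth rate $n-k$), but the reasoning needs to be stated correctly and the coupling made precise, since $(n-X^n)^+$ is not itself a CTMC. The paper instead goes through Lemma \ref{LemCouplingL} together with the Erlang-A abandonment estimate \eqref{Eq:EZnI}.

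For the upper tail, the pivotal conditional inequality $E[\sum_i \ell^n_i(\infty)\mid Q^n(\infty)]\ge c\,(Q^n(\infty))^2/\lambda^n$ is not established --- you flag it as ``the hard part,'' and the selection bias is genuine: conditional on $Q^n(\infty)=q$, the $\ell^n_i$ are ages of abandonment survivors, and your reduction to the ``uniform baseline'' presupposes $\theta w^n_v=o_P(1)$, which is essentially the bound you are trying to prove. There is also a dependency issue: Proposition \ref{PropL}(a) is proved in the paper \emph{using} \eqref{Eq:Balance}, which is derived inside the very proof of the present proposition; to avoid apparent circularity you would need to first establish \eqref{Eq:Balance} from the workload-balance argument (which is indeed independent of tightness, but this must be said). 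The paper's route avoids conditional expectations entirely: it expresses $E[Q^n(\infty)]$ as a functional of the single scalar $w^n_v(\infty)$ through abandonment-rate balance, then exploits the concavity of $f(x)=(1+x)e^{-x}$ on $[0,1]$ and Jensen's inequality to turn \eqref{Eq:Balance} into $E[(\theta w^n_v(\infty))\wedge 1]=O(n^{-1/4})$, giving $E[Q^n(\infty)]=O(n^{3/4})$ by a first-moment estimate and bypassing your second-moment programme entirely. Your fast-time-scale fallback also has a gap: Theorem \ref{thFWLLN} assumes $\wt X^n(0)\Ra x_0\in\RR$, so applying it with a diverging initial condition $M\to\infty$ would require an independent tightness/fluid-limit argument for the rescaled process, which is again the very estimate being sought.
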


\begin{proof}[Proof of Theorem \ref{ThStationaryF}]
For each $n \ge 1$, we consider a stationary version of the processes $X^n$ and $L^n$ by taking
\begin{equation} \label{InitialChoice}
  {X}^n(0)\deq {X}^n(\infty) \qandq ~ {L}^n(0)\deq {L}^n(\infty).
\end{equation}
Due to Proposition \ref{Prop:FluidTightness}, each subsequence of $\{\wt{X}^n(\infty) : n \ge 1\}$ has a further weakly converging subsequence;
let $\{\wt{X}^k(\infty) : k \ge 1\}$ be such a converging subsequence, and let $X_0$ be its weak limit.
Then by our choice of the initial distribution, it holds that $\wt{X}^k(0)\Ra X_0$, and
the stated convergence in Proposition \ref{Prop:FluidTightness} implies that $X_0 \ge 0$ w.p.1.
Moreover, by Proposition \ref{PropL} it holds that $\wt{L}^k(0)\Ra 0$ as $k\tinf$.

Now, conditional on the event $\{X_0=x_0\}$, for $x_0 \in \RR_+$, we have $\wt{X}^n\Ra X^0_F$ in $D$ as $n\tinf$ by virtue of Theorem \ref{ThLLF2},
where $X_F^0$ is the unique solution to the IVP \eqref{Eq:FODE} with initial condition $X^0_F(0) := X_0 = x_0$.
Moreover, the stationarity of the prelimit $\{\wt X^n : n \ge 1\}$ implies that the limit $X^0_F$ is strictly stationary as well,
so that $X^0_F(t) \deq X_0$ for all $t \ge 0$.

To show that $X_0=x^*$, w.p.1., recall that any solution to the ODE in \eqref{Eq:FODE}
converges monotonically to $x^*$ as $t \tinf$.
Hence, on the event $E_0 := \{X_0 \ne x^*\}$, it holds that
\begin{equation*}
|X^0_F(t)-x^*| < |X_0-x^*| \qforallq t > 0,
\end{equation*}
in contradiction to the stationarity of $X_0$, so that $E_0$ is a $P$-null event.
Thus, the limit of all weakly converging subsequences of $\{\wt X^n(0) : n \ge 1\}$ is $x^*$, implying that $\wt X^n(0) \Ra x^*$ as $n\tinf$.
The result follows from our choice of the initial conditions in \eqref{InitialChoice}.
\end{proof}


To prove Proposition \ref{PropDiffusion0}, we need the following comparison lemma, whose proof appears at the end of this section.
Consider two $M/M_{pc}/n+M_{pc}$ systems, denoted by $\PP_1$ and $\PP_2$, both having service rate $\mu=1$.
Let the arrival rates $\lambda_i$ in $\PP_i$, $i=1,2$, satisfy $\lambda_1\ge \lambda_2$,
the abandonment rate $\theta_1$ of $\PP_1$ satisfy $0\le\theta_1<1$, and the abandonment rate $\theta_2$ of $\PP_2$ satisfy
$$\theta_2\ge\theta_1/(1-\theta_1)\ge\theta_1.$$
Note that we allow for $\theta_1=0$, in which system $\PP_1$ reduces to an $M/M/n$ system.
(In this case, we assume that all the customers that are initially in the system
have exponentially distributed remaining service times, each with mean $1$.)

Let $X_i$ denote the number-in-system process in $\PP_i$, $i=1, 2$.
If either $\theta_i>0$ or $\lm_i<n$, Theorem \ref{ThErgodicity} implies that $X_i$ has a stationary distribution, which we denote by $X_i(\infty)$.

\begin{lemma}
  \label{LemCompTrick}
If $\theta_1>0$ or $\lm_1<n$ so that $X_1(\infty)$ and $X_2(\infty)$ exist, then $X_1(\infty)\ge_{s.t.} X_2(\infty)$.
\end{lemma}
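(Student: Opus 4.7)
The plan is to construct a pathwise coupling of $\PP_1$ and $\PP_2$ on a common probability space yielding $X_1(t)\ge X_2(t)$ almost surely for all $t\ge0$ when both systems start empty; invoking the ergodicity in Theorem \ref{ThErgodicity} and letting $t\to\infty$ then gives the stated stochastic domination of stationary distributions. Since any arrivals to $\PP_1$ that are not arrivals to $\PP_2$ can only increase $X_1$ without affecting $X_2$, it suffices to treat $\lambda_1=\lambda_2$, so both systems share a common Poisson arrival stream. For each arrival $k$ I introduce an exponential-1 random variable $R_k$ and set patiences $T_{k,j}:=R_k/\theta_j$ (with $T_{k,1}:=\infty$ when $\theta_1=0$), which produces the correct $\exp(\theta_j)$ marginals and the monotone ordering $T_{k,2}\le T_{k,1}$. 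Using the representation behind \eqref{servTime}, in which the phase-2 residual $S_b$ for a served customer of wait $w$ in $\PP_j$ is identified with $\theta_j(T_{k,j}-w)$, the actual service time of a served customer equals $\theta_j T_{k,j}=R_k$---the same value in both systems. It follows that the sojourn of customer $k$ in $\PP_j$ is $w_{k,j}+R_k$ if served and $R_k/\theta_j$ if abandoned.

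The core invariant to maintain is that the set $\mathcal C_2(t)$ of customers present in $\PP_2$ at time $t$ is a subset of the corresponding set $\mathcal C_1(t)$ in $\PP_1$; this immediately yields $X_2(t)\le X_1(t)$. I prove the invariant by induction over event times (arrivals, abandonments, and service completions), simultaneously establishing the auxiliary inequality $w_{k,2}\le w_{k,1}$ for every customer $k$. The auxiliary inequality is a direct FIFO consequence of the invariant: since $\mathcal C_2\subseteq\mathcal C_1$, the count of earlier arrivals still present at time $t$ is pathwise dominated, so the first time after $E_k$ at which this count drops below $n$---which is when $k$ enters service---occurs no later in $\PP_2$ than in $\PP_1$. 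With $w_{k,2}\le w_{k,1}$ and $T_{k,2}\le T_{k,1}$ in hand, a case analysis shows that $k$'s sojourn in $\PP_2$ never exceeds its sojourn in $\PP_1$, so every departure event preserves the subset relation. The only nontrivial case is when $\PP_2$ serves $k$ (so $w_{k,2}<T_{k,2}=R_k/\theta_2$) while $\PP_1$ abandons $k$ (sojourn $R_k/\theta_1$); the required inequality $w_{k,2}+R_k\le R_k/\theta_1$ reduces to $w_{k,2}\le R_k(1-\theta_1)/\theta_1$, which follows from $w_{k,2}<R_k/\theta_2$ precisely under the hypothesis $\theta_2\ge\theta_1/(1-\theta_1)$.

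The main technical obstacle is the simultaneous induction: the invariant $\mathcal C_2\subseteq\mathcal C_1$ and the wait ordering $w_{k,2}\le w_{k,1}$ are entangled, because each wait is determined by the entire sample path up to $k$'s entry into service, while the invariant propagates through sojourn comparisons that rely on the wait ordering. Organizing the induction carefully---processing event times in chronological order and advancing both statements together---resolves this circularity, and the arithmetic hypothesis on $(\theta_1,\theta_2)$ enters exactly in the single case identified above, where a served customer in $\PP_2$ could otherwise outlast an abandoning customer in $\PP_1$.
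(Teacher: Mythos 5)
Your proposal is correct and follows essentially the same route as the paper's proof: superposition of the arrival process to reduce to equal rates, coupling the two systems by giving corresponding customers the same service requirement $R_k$ (hence scaled patiences $T_{k,j}=R_k/\theta_j$), and an induction on the FIFO arrival order showing that each customer's sojourn in $\PP_2$ is dominated by that in $\PP_1$, with the arithmetic condition $\theta_2\ge\theta_1/(1-\theta_1)$ entering exactly where a customer abandons $\PP_1$ yet is served in $\PP_2$. The paper organizes the induction by customer index rather than by event time and bundles your three non-critical sub-cases into its Cases 1--3, but the coupling, the invariant, and the key inequality are identical.
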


\begin{proof}[Proof of Proposition \ref{PropDiffusion0}]
We write $\wh X_C(t;\beta)$ to make explicit the dependence of the distribution of the process $\wh X_C$ on the value of $\beta$, as well as of its stationary distribution
(when $t:=\infty$).
Fix $\epsilon>0$, and consider a sequence $\{\beta^n_\ep:n \ge 1\} \subset \RR_+$ satisfying $\beta^n_\ep \ge \beta^n$ and $\beta^n_\ep\ra\ep$ as $n\tinf$.
Let a sequence of $M/M_{pc}/n+M_{pc}$ systems be labeled by $n$, with arrival rate $\lambda^n_\ep:=n-\beta^n_\ep$, service rate $1$, and patience rate $\theta/(1-\theta)$.
Denote by $X^n_\ep(\infty)$ the stationary distribution of the number-in-system process of the $n$th system.
Lemma \ref{LemCompTrick} and the existence of $X^n_{\ep}(\infty)$ imply that $X^n_{\ep}(\infty)\le_{s.t.}X^n(\infty)$, so that, for any $M > 0$,
\begin{equation*}
  P(\wh{X}^n(\infty)>M)\ge P(\wh{X}^n_\ep(\infty)>M).
\end{equation*}
On the other hand, Theorem \ref{ThStationary} gives
\begin{equation*}
  P(\wh{X}^n_\ep (\infty)>M)\ra P(\wh X_C(\infty;\ep)>M) \qasq n\tinf,
\end{equation*}
so that
\begin{equation*}
  \liminf_{n\tinf}P(\wh{X}^n (\infty)>M)\ge P(\wh X_C(\infty;\ep)>M), \qforallq M>0.
\end{equation*}
Finally, \eqref{Eq:XCInfty}--\eqref{Eq:XCInfty2} give
\begin{equation*}
  \lim_{\ep\ra 0^+} P(\wh X_C(\infty;\ep)>M)= 1.\check{}
\end{equation*}
Therefore, $P(\wh{X}^n(\infty)>M)\ra1$ as $n\tinf$ for any $M>0$, implying the result.
\end{proof}

It remains to prove Lemma \ref{LemCompTrick}.
\begin{proof}[Proof of Lemma \ref{LemCompTrick}]
We assume that the the arrival process to $\PP_1$
is the superposition of two independent Poisson streams, with stream $1$ having rate $\lambda_2$ and stream $2$ having rate $\lambda_1-\lambda_2$.
We consider a coupling of $\PP_1$ and $\PP_2$ such that (i) both systems start empty;
(ii) stream-1 arrivals to $\PP_1$ and all arrivals of $\PP_2$ follows the same Poisson process;
and (iii) any stream-1 arrival to $\PP_1$ and the corresponding arrival of $\PP_2$ have the same service time.
Using this coupling, we will show that the sojourn time of every stream-$1$ arrival to $\PP_1$ is at least as long as that of the same arrival in $\PP_2$.
We label the stream-$1$ arrivals to $\PP_1$, that also constitute the arrivals to $\PP_2$, by $1,2,\cdots$, and denote by $S_i$ the service time of customer $i$.
We denote the coupled number-in-system processes in $\PP_1$ and $\PP_2$ by $\check X_1$ and $\check X_2$, respectively.

The proof proceeds by induction.
First, customer $1$ in $\PP_2$ enters service immediately upon arrival, so that his sojourn time is $S_1$.
The same customer in $\PP_1$ may
(i) enter service immediately, and experience the same sojourn time $S_1$;
(ii) enter service after waiting in queue, so that his sojourn time is greater than $S_1$;
or (iii) abandon the system, after waiting for $\theta_1^{-1}S_1> S_1$ units of time.
In all three scenarios, the sojourn time of customer $1$ in $\PP_1$ is at least as large as in $\PP_2$.

Take the induction hypothesis that the first $j$ customers have equal or shorter sojourn times in $\PP_2$ than in $\PP_1$.
There are three cases to consider in order to show that the same is true for the $(j+1)$st customer.

\noindent{\bf Case 1: Customer $j+1$ abandons $\PP_1$.} In this case, that customer's sojourn time in $\PP_1$ is equal to $\theta_1^{-1}S_{j+1}$.
On the other hand, the sojourn time of customer $j+1$ in $\PP_2$ is bounded from above by the service requirement $S_{j+1}$ plus the patience time $\theta_2^{-1}S_{j+1}$.
Since $(1+\theta_2^{-1})S_{j+1}\le\theta_1^{-1}S_{j+1}$, the ordering of the sojourn times for the first $j$ customers in the two systems
remains to hold for the $(j+1)$st customer.

\noindent{\bf Case 2: Customer $j+1$ is served in $\PP_1$ but abandons $\PP_2$.}
For $1 \le k \le j+1$, denote by $D^1_k$ and $D^2_k$ the time when the $k$th customer leaves system $\PP_1$ and system $\PP_2$, respectively.
By the induction hypothesis, we have $D^1_k\ge D^2_k$ for $k=1,2,\cdots,j$.
Denote by $F^1_{j+1}$ the time when customer $j+1$ enters service in $\PP_1$. 
Clearly, the first $j$ customers are not in queue at this time, namely, each of them is either in service or has left $\PP_1$
(either via abandonment or service completion). 
In particular, if customer $k \le j$ is still in system $\PP_1$, this customer must be in service.
For a system with $n$ servers, we then have
\begin{equation*}
  \sum^{j+1}_{i=1}1\{D^1_i\le F^1_{j+1}\}\le n,\qandq D^1_{j+1}=F^1_{j+1}+S_{j+1}> F^1_{j+1}
\end{equation*}
so that
\begin{equation*}
  \sum^{j}_{i=1}1\{D^1_i\le F^1_{j+1}\}\le n-1.
\end{equation*}
Using $D^1_k\ge D^2_k$ for $k=1,2,\cdots,j$, we have
\begin{equation*}
  \sum^{j}_{i=1}1\{D^2_i\le F^1_{j+1}\}\le \sum^{j}_{i=1}1\{D^1_i\le F^1_{j+1}\}\le n-1,
\end{equation*}
so that there are no more than $n-1$ of the first $j$ stream-$1$ customers in $\PP_2$ at time $F^1_{j+1}$.
As system $\PP_2$ has $n$ servers and customer $j+1$ abandons system $\PP_2$, customer $j+1$ must have abandoned system $\PP_2$ by time $F^1_{j+1}$.
Therefore, customer $j+1$ has equal or shorter sojourn time in system $\PP_2$ than system $\PP_1$.

\noindent{\bf Case 3: Customer $j+1$ is served in both systems $\PP_1$ and $\PP_2$.}
As in Case 2, there are no more than $n-1$ customers of label $1, 2,\cdots,j$ present in system $\PP_2$ at time $F^1_{j+1}$.
In this case, customer $j+1$ is served in system $\PP_2$, so this customer must have entered service by time $F^1_{j+1}$,
implying that his delay in queue in $\PP_2$ is no longer than his delay in queue in $\PP_1$.
Since the service time of this customer is the same in both coupled systems,
the ordering of his sojourn times in both systems remains as in the previous two cases.


In either of the above three cases, the ordering of the sojourn times of the stream-$1$ customers
imply that $\check X_1(t) \ge \check X_2(t)$ w.p.1, so that $X_1(t) \ge_{s.t.} X_2(t)$, for all $t \ge 0$.
Since $X_i(t) \Ra X_i(\infty)$ as $t\tinf$, for $i=1,2$, by Theorem \ref{ThErgodicity} (independently of the initial condition),
The result follows from the fact that stochastic order is maintained under weak convergence \cite[Proposition 3]{kamae1977stochastic}.
\end{proof}

\section{Proofs of Supporting Results for Process Limits} \label{secAuxProof1}

In this section we prove Propositions \ref{Prop:D}--\ref{Prop:V}. In particular, we prove Proposition \ref{Prop:F} in Section \ref{subsecPropF} and Propositions \ref{Prop:Tightness}--\ref{Prop:V} in Section \ref{subsecProofProps}.
The proof of Proposition \ref{Prop:D} appears last, in Section \ref{SubSec:ProofD}, since it requires arguments from some of the previous proofs in this section.
Proofs of supporting lemmas that are used in the proofs of the propositions are given in Section \ref{SecProofsLemmas}.

\subsection{Proof of Proposition \ref{Prop:F}.} \label{subsecPropF}
We refer to the customers that are in the system at time $0$ as the ``initial customers.''

\begin{proof}[Proof of Assertion ($a$)]
For $i=1,2,\cdots, Z^n(0)$ and $j=1,2,\cdots, Q^n(0)$, let $g^n_i(t)$ be the elapsed phase-$1$ service time of the $i$th initial customer in service,
and $h^n_j(t)$ be the elapsed phase-$1$ service time of the $j$th initial customer in the queue, at time $t$.
Then
\begin{equation*}
  \int^t_0Z^n_0(s)ds=\sum^{Z^n(0)}_{i=1}g^n_i(t)+\sum^{Q^n(0)-Q^n_0(t)}_{i=1}h^n_i(t).
\end{equation*}
Notice that $g^n_i(t)\le r^n_i(0)$, for any $i=1,2,\cdots, Z^n(0)$ and $t\ge0$ and that
  \begin{equation*}
    h^n_i(t)\le \theta \ell^n_i(0)+\theta (T^n_0\wedge t)
  \end{equation*}
for an initial customer $i$ who has left the queue by time $t$.
Hence,
\begin{align}
  \label{Eq:Z0}
  \int^t_0 Z^n_0(s)ds&\le \sum^{Z^n(0)}_{i=1}r^n_i(0)+\theta\sum^{Q^n(0)}_{j=1}\ell^n_i(0)+\theta(T^n_0\wedge t) (Q^n(0)-Q^n_0(t))\nonumber\\
  &\le (1+\theta)L^n(0)+\theta(T^n_0\wedge t) (Q^n(0)-Q^n_0(t)),\mbox{ for all } t\ge0,
\end{align}
To bound $\int^t_0 Q^n_0(s)ds$, notice that each initial customer in the queue waits for at most $T^n_0\wedge t$ during $[0,t]$, so that
\begin{equation}
  \label{Eq:Q0}
  \int^t_0 Q^n_0(t)dt\le (T^n_0\wedge t)Q^n(0),\mbox{ for all } t\ge0.
\end{equation}

Now, since $\wt{Z}^n_0$ and $\wt{Q}^n_0$ are non-negative processes, \eqref{Eq:Z0} and \eqref{Eq:Q0} give
\begin{align*}
  0 & \le n^{1/4}\int^\infty_0\wt{Z}^n_0(s)ds\le (1+\theta) \wt L^n(0)+\theta T^n_0\wt{Q}^n(0),\\
  0 & \le n^{1/4}\int^\infty_0\wt{Q}^n_0(s)ds\le T^n_0\wt{Q}^n(0).
\end{align*}
Due to \eqref{Ass:Ib}, it suffices to prove that $T^n_0\Ra0$ in $\RR$, as $n\tinf$;
in particular, we need only consider the event $\{T^n_0 > 0 \mbox{ ~ for all $n$ large enough}\}$.
Let
\bes 
T^n_1:=4n^{-1}(1+\theta)(L^n(0)+1),
\ees
and note that $T^n_1\Ra0$ in $\RR$ as $n\tinf$.
For each $n \ge 1$, define the event
$$\Upsilon^n:=\{T^n_1<T^n_0 \text{ and } Z^n_2(t_0) < n/2 \textrm{ ~for some } t_0\in[T^n_1,T^n_0]\}.$$
We will show that $P(\Upsilon^n)\ra0$ as $n\tinf$.
To this end, note that, since $Z^n_1(s)=Z^n_0(s)$ for $s\le T^n_0$ (since $Z^n_0(s)$ is the number of initial customers that are in phase-$1$ at time $s$)
and $Z^n_0+Q^n_0$ is non-increasing, it holds on the event $\Upsilon$ that, for all $s\in[0, T^n_1]$,
\begin{equation*}
  Z^n_0(s)+Q^n_0(s)\ge Z^n_0(t_0)+Q^n_0(t_0)\ge Z^n_0(t_0)=Z^n_1(t_0)> n/2.
\end{equation*}
The last inequality and \eqref{Eq:Z0} give the bounds
\begin{align}
\frac{nT^n_1}{2} < \int^{T^n_1}_0(Z^n_0(s)+Q^n_0(s))ds & \le (1+\theta)(L^n(0)+2T^n_1{Q}^n(0)) \nonumber\\
  \label{EqTn0c}
& < \frac{n T^n_1 }{4} + 2(1+\theta)T^n_1{Q}^n(0),
\end{align}
where the equality in \eqref{EqTn0c} follows from the definition of $T^n_1$.
Notice that \eqref{EqTn0c} cannot hold when $Q^n(0)\le (1+\theta)^{-1}n/8$, and so, together with \eqref{Ass:Ib},
\begin{equation*}
  P(\Upsilon^n)\le P(Q^n(0) > (1+\theta)^{-1}n/8)\ra0 \qasq n\tinf.
\end{equation*}
Thus, we need only consider sample paths on the complementary event $\Upsilon^c$. On this event, either
$T^n_0\le T^n_1$ or, if $T^n_0>T^n_1$, then there are at least $n/2$ customers in phase-$2$ service over the interval $[T^n_1, T^n_0]$,
in which case the total service rate is at least $n/2$.
In either case, for a sequence of i.i.d.\ exponentially distributed random variables $\{\mathcal{E}^n_k : k \ge 1\}$, each having rate $n/2$, it holds that
$$T^n_0 \le_{s.t.} T^n_1 + \sum_{k=1}^{Q^n(0)} \mathcal E^n_k,$$
where the latter sum is defined to be equal to $0$ on $\{Q^n(0) = 0\}$.
It follows that $T^n_0\Ra0$ in $\RR$ as $n\tinf$, implying the result.
\end{proof}
The proofs of Assertions (b) and (c) require showing that $\wt Q^n = O_P(n)$. While this follows immediately from Proposition \ref{Prop:Tightness},
we cannot use it now, since the proof of this latter proposition requires the current proof.
Thus, we next state and prove a weaker result than the tightness stated in Proposition \ref{Prop:Tightness},
whose proof appears in Section \ref{SecProofsLemmas}.
\begin{lemma}
  \label{Lem:SBQ}
  If \eqref{Ass:Ib} holds, then $\{\wt Q^n : n \ge 1\}$ is stochastically bounded.
\end{lemma}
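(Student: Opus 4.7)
The plan is to localize and exploit a hidden negative quadratic drift in $\wt V^n$. Fix $T, K > 0$ and set
\bes
\sigma^n_K := \inf\{\tau \ge 0 : \wt Q^n(\tau) \ge K\}.
\ees
On the stopped interval $[0, \sigma^n_K \wedge T]$ we have the \emph{a priori} bound $\wt Q^n \le K$, so the conclusion of Lemma \ref{Lem:SBQ} holds automatically there and the subsequent arguments of Proposition \ref{Prop:F}(b)--(d) can be applied on this interval without circularity. In particular, on $[0, \sigma^n_K \wedge T]$ the martingales $\wt M^n_A, \wt M^n_S, \wt M^n_R$ have quadratic variations that are $o_P(1)$ after the $n^{-3/4}$ spatial scaling (because the abandonment rate is bounded by $\theta K n^{3/4}$ and the arrival and phase-$2$ rates are $O(n)$), the initial corrections $\wt Z^n_0, \wt Q^n_0$ are negligible by Proposition \ref{Prop:F}(a), and the $\wt U^n_i$ terms are treatable by martingale estimates that only require the local bound $\wt Q^n \le K$. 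The task therefore reduces to understanding $\wt V^n$ directly.

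The crux is to analyze $V^n$ in \eqref{Eq:V} without invoking Proposition \ref{Prop:V}. Using $(\theta w^n)\wedge(t-s)\le \theta w^n$, the identity $Z^n_2 = n - Z^n_1$ on $\{Q^n > 0\}$, and the Taylor inequality $1 - e^{-\theta w} \ge \theta w - \theta^2 w^2/2$ for $w \ge 0$, we obtain
\bes
V^n(t) \le \theta(n-\lambda^n)\int_0^t w^n(s) ds - \theta \int_{T^n_0\wedge t}^t Z^n_1(s) w^n(s) ds + \frac{\lambda^n \theta^2}{2}\int_0^t (w^n(s))^2 ds.
\ees
A short separate analysis of $Z^n_1$ from the phase-$1$ representation \eqref{Eq:ssc}, combining the asymptotic negligibility of $Z^n_0$ from Proposition \ref{Prop:F}(a) with a direct balance argument on the stopped interval, gives $Z^n_1(t) = \theta \lambda^n w^n(t) + o_P(n^{3/4})$ on $[0, \sigma^n_K \wedge T]$. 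Substituting this into the middle integral yields a cancellation with the third integral that leaves a residual of size $-\tfrac{\lambda^n \theta^2}{2}\int(w^n)^2 ds$, which, after applying the FIFO relation $\lambda^n w^n \approx Q^n$ and the LOF rescaling, becomes $-\tfrac{\theta^2}{2}\int_0^\tau(\wt Q^n(s))^2 ds$.

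Assembling the previous steps, on $[0, \sigma^n_K \wedge T]$ we derive an inequality of the form
\bes
\wt X^n(\tau) \le \wt X^n(0) + |\beta|\tau - \frac{\theta^2}{2}\int_0^\tau (\wt Q^n(s))^2 ds + o_P(1).
\ees
Stochastic boundedness then follows by comparison with the deterministic IVP \eqref{Eq:FODE}: if $\wt Q^n$ ever approaches a level $K$ much larger than $x^* = \sqrt{-2\beta}/\theta$, the quadratic drift $-\tfrac{\theta^2}{2}K^2$ dominates every other term above, contradicting $\sigma^n_K \le T$ for $K$ sufficiently large. The main obstacle is precisely the extraction of this quadratic damping from $V^n$ without appealing to Proposition \ref{Prop:V} (whose own proof rests on the present lemma); the phase-$1$ balance $Z^n_1 \approx \theta \lambda^n w^n$ driving the cancellation in $V^n$ has to be established by a direct calculation on the stopped interval, using only Proposition \ref{Prop:F}(a) and the memoryless property of the exponential primitives. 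A secondary, milder difficulty is that $U^n_2$ is not itself a martingale, but on the stopped interval the random integration window $[t - w^n(t), t]$ has length bounded by $K n^{-1/4}$, which suffices for a direct Doob-type estimate.
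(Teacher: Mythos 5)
Your proposal attacks the lemma by a direct localized drift analysis of the $M/M_{pc}/n+M_{pc}$ system itself, which is a genuinely different route from the paper's. The paper instead constructs a pathwise \emph{dominating} system $\UU^n$: a degenerate $M/M_{pc}/n+M_{pc}$ queue in which customers have infinite patience, coupled so that $X^n(t)\le X^n_U(t)$ w.p.1. Because arrivals to $\UU^n$ have no phase-$1$ service, the martingale representation of $\wt X^n_U$ contains \emph{no} $V^n$ term at all; after showing that the initial-condition correction $\wt Z^n_{U0}$ and the martingale terms are $o_P(1)$, an affine-in-(scaled-)time upper bound on the positive excursions of $\wt X^n_U$ is obtained by splitting at the last time $X^n_U$ dipped below $n$. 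Stochastic boundedness of $\wt Q^n_U$ (and hence of $\wt Q^n$) follows immediately. The coupling entirely sidesteps the need to control $V^n$, which is the hardest object in the paper.

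Your route has a gap that is more severe than you acknowledge. You correctly observe that the crude upper bound on $V^n$ produces a quadratic term of the \emph{wrong} sign ($+\tfrac{\theta^2}{2}\int(\wt w^n)^2$), so the phase-$1$ balance $Z^n_1 \approx \theta\lambda^n w^n$ is essential to flip the sign. But in the paper this balance is established (as \eqref{Eq:Z1w}) via the quantity $\Delta^n$, a modulus-of-continuity estimate for $\wt w^n$ over LOF-time windows of length $O(n^{-1/2})$, and proving $\Delta^n\Ra0$ uses the $C$-tightness of $\{\wt Q^n\}$ from Proposition~\ref{Prop:Tightness}; that proposition in turn rests on Lemma~\ref{Lem:SBw}, Lemma~\ref{Lem:SBXZ}, and Lemma~\ref{Lem:V}, all of which depend on the present lemma. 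Localizing with $\sigma^n_K$ gives you the a priori bound $\wt Q^n\le K$, but a pointwise bound does not by itself yield the modulus-of-continuity control; you would have to reprove essentially all of Proposition~\ref{Prop:Tightness} (and the parts of Lemmas~\ref{Lem:SBw}, \ref{Lem:SBXZ}, \ref{Lem:V} and Proposition~\ref{Prop:F}(c) that it uses) on the stopped interval, which is a substantial undertaking you merely assert can be done. There is also a smaller error in the assembled inequality: writing $\wt X^n(\tau)\le \wt X^n(0)+|\beta|\tau-\tfrac{\theta^2}{2}\int_0^\tau(\wt Q^n)^2ds + o_P(1)$ omits the reflection term $-n^{1/4}\int_0^\tau\wt Z^n(s)\,ds$, which is nonnegative and $\Theta_P(1)$ in general; the bound can only be asserted from the last time $\wt X^n$ was below zero, exactly the device the paper uses for $\wt X^n_U$. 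Finally, note that once the reflection is handled correctly, the affine-in-time bound from the $-\beta^n$ drift already gives stochastic boundedness over compacts, so the quadratic drift, which is the main labor in your plan, is not actually needed for this particular lemma; it is precisely this observation that makes the paper's coupling so economical.
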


\begin{proof}[Proof of Assertion ($b$)]
Consider the predictable quadratic variation of the (local) martingales $(\wt{M}^n_A,\wt{M}^n_S,\wt{M}^n_R)$.
As $n\tinf$, the following limits hold in $\D$
\begin{gather*}
  \langle \wt{M}^n_{A}\rangle (t) = n^{-3/2}(n^{1/4}\lambda^n) \eta \ra 0\eta,\\
  0\le\langle \wt{M}^n_{S}\rangle (t) = n^{-3/2}\int^{n^{1/4}t}_0 Z^n(s)ds\le n^{-1/4}\eta \ra 0 \eta, \\
  0\le\langle \wt{M}^n_{R}\rangle (t) = n^{-3/4}\int^{t}_0\wt{Q}^n(s)ds\Ra 0\eta,
\end{gather*}
where the last weak convergence follows from Lemma \ref{Lem:SBQ}.
Hence, $(\wt{M}^n_A, \wt{M}^n_S, \wt{M}^n_R)\Rightarrow (0\eta,0\eta,0\eta)$ in $\D^3$, as $n\rightarrow\infty$ by, e.g.,
Theorem $8.1$ in \cite{pang2007martingale}.
\end{proof}

To prove Assertions (c) and (d) we need the following lemma, whose proof appears in Section \ref{SecProofsLemmas}
\begin{lemma}
  \label{Lem:SBw}
  $\{\wt{w}^n:n\in\ZZ_+\}$ is stochastically bounded in $\D$.
\end{lemma}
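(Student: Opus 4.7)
\medskip

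\noindent\textbf{Proof plan for Lemma \ref{Lem:SBw}.}
The starting point is the identity \eqref{Eq:sssc}, which we rewrite as
\begin{equation} \label{Eq:SBw-key}
1 - e^{-\theta w^n(t)} \;=\; \theta \lambda^{-n}\bigl(Q^n(t) - U^n_2(t) - Q^n_0(t)\bigr),
\end{equation}
equivalently $\theta w^n(t) = -\log\!\bigl(1 - \theta\lambda^{-n}(Q^n(t) - U^n_2(t) - Q^n_0(t))\bigr)$. The plan is to show that each of the three terms on the right-hand side of \eqref{Eq:SBw-key}, scaled by $\lambda^n$, is uniformly negligible on $[0,n^{1/4}T]$, and then to invert the logarithm using the elementary bound $-\log(1-x)\le 2x$ for $x\in[0,1/2]$.

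\medskip

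\noindent\emph{Bounding the queue terms.}
By Lemma \ref{Lem:SBQ}, $\|\wt Q^n\|_T$ is tight, so $\sup_{s\le n^{1/4}T}Q^n(s) = n^{3/4}\|\wt Q^n\|_T = O_P(n^{3/4})$. Since $Q^n_0$ is nonincreasing, $\sup_s Q^n_0(s)\le Q^n(0) = n^{3/4}\wt Q^n(0)$, which is $O_P(n^{3/4})$ by \eqref{Ass:Ib} (as $Q^n(0)\le L^n(0)/\theta + \text{const}\cdot n^{3/4}$, using that each queued customer contributes at least $0$ to $L^n(0)$, and also $Q^n(0)\le X^n(0)\le n+n^{3/4}\wt X^n(0)$).

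\medskip

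\noindent\emph{Bounding $U^n_2$.}
Recall from \eqref{Eq:U2}--\eqref{Eq:F} that $U^n_2(t) = F^n(t,t) - F^n(t-w^n(t),t)$, and that, for each fixed $t$, $s\mapsto F^n(s,t)$ is a square-integrable $\mathcal{F}^n$-martingale with predictable quadratic variation $\int_0^s \lambda^n e^{-\theta(t-u)}du \le \theta^{-1}\lambda^n$. Doob's $L^2$ inequality therefore gives, for each fixed $t$, $E[\sup_{s\le t}F^n(s,t)^2]\le 4\theta^{-1}\lambda^n$. To upgrade this to a uniform bound over $t\in[0,n^{1/4}T]$, I would discretize $t$ on a grid of spacing $n^{-1/2}$, apply the exponential Chebyshev inequality to the grid (using that $F^n(\cdot,t)$ is a compensated jump martingale with bounded jumps, so sub-Gaussian-type tail bounds hold), and control $|F^n(s,t)-F^n(s,t')|$ for $|t-t'|\le n^{-1/2}$ via the fact that the time-variation is driven by abandonments whose rate is bounded by $\theta n^{3/4}\|\wt Q^n\|_T$. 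The conclusion is
\[
\sup_{t\le n^{1/4}T}\sup_{s\le t}\bigl|F^n(s,t)\bigr| \;=\; O_P\!\bigl(\sqrt{n\log n}\bigr),
\]
so $\sup_{t\le n^{1/4}T}|U^n_2(t)| = O_P(\sqrt{n\log n}) = o_P(n^{3/4})$.

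\medskip

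\noindent\emph{Conclusion.} Combining the bounds,
$\theta\lambda^{-n}\sup_{t\le n^{1/4}T}\bigl(Q^n(t) + |U^n_2(t)| + Q^n_0(t)\bigr) = O_P(n^{-1/4}) \to 0$,
so on an event of probability tending to $1$, the argument of the logarithm in \eqref{Eq:SBw-key} lies in $[0,1/2]$. Applying $-\log(1-x)\le 2x$ yields
\[
\sup_{t\le n^{1/4}T}w^n(t)\le \frac{2}{\lambda^n}\sup_{t\le n^{1/4}T}\bigl(Q^n(t)+|U^n_2(t)|+Q^n_0(t)\bigr) \;=\; O_P(n^{-1/4}),
\]
and hence $\|\wt w^n\|_T = n^{1/4}\sup_{t\le n^{1/4}T}w^n(t) = O_P(1)$ for every $T>0$, which is the claimed stochastic boundedness in $D$. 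The main obstacle is the technical step of Step 2: obtaining a uniform-in-$t$ control on the two-parameter martingale $F^n(s,t)$ over the growing time window $[0,n^{1/4}T]$. The Doob bound gives the right order at each fixed $t$, but propagating it uniformly requires a grid/continuity argument that must be executed carefully, because the family $\{F^n(\cdot,t):t\ge0\}$ is not a single martingale.
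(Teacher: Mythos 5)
Your route is genuinely different from the paper's, and it has a real gap at the key step. The paper proves the lemma by a simple sample-path inequality: since the head-of-line customer at time $t$ arrived at $t-w^n(t)$, every delayed customer in queue at time $t$ (other than an initial one) arrived during $[t-w^n(t),t)$ and has not abandoned, giving
$Q^n(t)\ge A^n(t)-A^n(t-w^n(t))-R^n(t)+R^n(t-w^n(t))$.
After martingale decomposition of $A^n$ and $R^n$ and LOF scaling this becomes
$\wt Q^n(t)\ge \wt w^n(t)\big(n^{-1}\lambda^n - n^{-1/4}\theta\|\wt Q^n\|_t\big) - 2\|\wt M^n_A\|_t - 2\|\wt M^n_R\|_t$,
so stochastic boundedness of $\wt w^n$ follows directly from that of $\wt Q^n$ (Lemma \ref{Lem:SBQ}) and the one-parameter martingale estimates already in hand. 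No two-parameter control is needed.

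Your proposal instead inverts the identity \eqref{Eq:sssc}, which requires the uniform bound $\sup_{t\le n^{1/4}T}|U^n_2(t)|=o_P(n^{3/4})$. This is the genuine gap: you sketch, but do not carry out, a grid-plus-continuity argument for the two-parameter family $\{F^n(s,t)\}$ over the growing window $[0,n^{1/4}T]$. You cannot shortcut it via Proposition \ref{Prop:F}(d), since the proof there fixes the window length $K$ by citing Lemma \ref{Lem:SBw}, nor by applying Doob's inequality to the submartingale $e^{\theta t}\sup_{s\le t}|F^n(s,t)|$ of Lemma \ref{Lem:FMartingales} over the whole window, which would introduce a fatal $e^{\theta n^{1/4}T}$ factor; to your credit you sense this circularity and go the grid route instead. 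Two further remarks. First, your continuity control is not quite right as stated: the $t$-increment of $F^n(s,\cdot)$ counts arrivals in $[0,s]$ whose hypothetical abandonment epoch $E^n_k+T^n_k$ lands in $(t,t']$, regardless of whether those customers actually remain in queue, so the actual abandonment rate $\theta n^{3/4}\|\wt Q^n\|_T$ is not the relevant quantity; the correct conditional mean of this count is $\theta^{-1}\lambda^n(1-e^{-\theta(t'-t)})(e^{-\theta(t-s)}-e^{-\theta t})\le\lambda^n(t'-t)=O(n^{1/2})$ for spacing $n^{-1/2}$, so the conclusion can be saved, but for a different reason. Second, the aside "$Q^n(0)\le L^n(0)/\theta+\text{const}\cdot n^{3/4}$" is false --- $L^n(0)$ does not control $Q^n(0)$ when queued customers have zero elapsed wait --- though your other chain $Q^n(0)=n^{3/4}(\wt X^n(0))^+$ together with \eqref{Ass:Ib} suffices. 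In short, the plan could probably be completed, but the uniform $U^n_2$ bound is a non-trivial additional lemma that you leave open, whereas the paper's one-sided inequality avoids the issue entirely.
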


%

\begin{proof}[Proof of Assertion ($c$)]
We will show that $\{n^{1/2}\int^\cdot_0 \wt{U}^n_1(s)ds : n \ge 1\}$ is stochastically bounded in $D$, from which the assertion follows immediately.
To this end, note that similar arguments to those in \eqref{Eq:ssic} give
\begin{align}
  \label{Eq:U1int}
  n^{1/2}\int^t_0\wt{U}^n_1(s)ds &= n^{1/2}\int^t_{\wt{T}^n_0\wedge t}(n^{-1/2}\theta\wt{w}^n(s_1-))\wedge(t-s_1)d\wt{M}_S^n(s_1)\nonumber\\
  &= \int^t_{\wt{T}^n_0\wedge t}\theta\wt{w}^n(s_1-)d\wt{M}_S^n(s_1) \\
  &\quad -n^{1/2}\int^t_{\wt{T}^n_0\wedge t}\left(n^{-1/2}\theta\wt{w}^n(s_1-)-t+s_1\right)^+d\wt{M}_S^n(s_1) \nonumber.
\end{align}
Since $\wt{M}^n_S$ is an $\mathcal{F}^n_t$-martingale and $\wt{w}^n(s_1-)$ is a predictable process,
$\int^\cdot_0\wt{w}^n(s_1-)d\wt{M}_S^n(s_1)$ is also an $\mathcal{F}^n_t$-martingale, with corresponding predictable quadratic variation process
\begin{align*}
  \left\langle \int^t_0\theta\wt{w}^n(s_1-)d\wt{M}_S^n(s_1) \right\rangle
  = \int^t_0(\theta\wt{w}^n(s_1-))^2d\langle \wt{M}_S^n\rangle(s_1) 
  \le (\|\theta\wt{w}^n\|_t)^2\langle \wt{M}_S^n\rangle(t).
\end{align*}
If follows from Lemma \ref{Lem:SBw} and the proof of Assertion (b) that
\begin{equation*}
  \left\langle \int^\cdot_0\theta\wt{w}^n(s_1-)d\wt{M}_S^n(s_1) \right\rangle\Ra0\eta \qinq \D \mbox{ as } n\tinf,
\end{equation*}
implying that
\begin{equation*}
  \int^\cdot_0\wt{w}^n(s_1-)d\wt{M}_S^n(s_1)\Rightarrow 0\eta \qinq \D \mbox{ as } n\tinf,
\end{equation*}
due to the martingale FCLT (e.g., Theorem $8.1$ in \cite{pang2007martingale}).
Hence, for any $t \ge 0$,
\begin{equation}
  \label{EqU1First}
  \ep^n(t) := \sup_{s\in[0,t]}\Big|\int^{s}_{\wt{T}^n\wedge s}\wt{w}^n(s_1-)d\wt{M}_S^n(s_1)\Big|\Rightarrow 0 \qinq \RR \qasq n\tinf.
\end{equation}

To treat the second integral in the right-hand side of \eqref{Eq:U1int}, we first observe that, for $s_1 \in [0,t]$,
\begin{align*}
  (n^{-1/2}\theta\wt{w}^n(s_1-)-t+s_1)^+ &\le n^{-1/2}\theta\|\wt{w}^n\|_t 1\{s_1+n^{-1/2}\theta\wt{w}^n(s_1)\ge t\} \\
  & \le n^{-1/2}\theta\|\wt{w}^n\|_t 1\{s_1\ge t- n^{-1/2}\theta\|\wt{w}^n\|_t\}, \nonumber
\end{align*}
so that
\begin{align}
  \label{Eq:SBU12}
  &\Big|n^{1/2}\int^t_{\wt{T}^n_0\wedge t}\left(n^{-1/2}\theta\wt{w}^n(s_1-)-t+s_1\right)^+d\wt{M}_S^n(s_1)\Big| 
  \le  \|\theta\wt{w}^n\|_t\int^t_{t-n^{-1/2}\theta\|\wt{w}^n\|_t}d|\wt{M}^n_S(s_1)|.
\end{align}
Furthermore (recalling that $\tilde Z^n_2$ is centered about $n$),
\begin{equation}
  \label{EqwtMS}
  \wt{M}^n_S(t)+n^{1/4}\int^t_0(n^{1/4}+\wt{Z}^n_2(s))ds=n^{-3/4}D^n(n^{1/4}t),
\end{equation}
is a non-decreasing pure-jump process and $-n^{1/4}\le \wt{Z}^n_2\le 0$, we have
\begin{align}
  \label{Eq:SBU1}
  \int^t_{s}d|\wt{M}^n_S(s_1)|\le& \int^t_s\Big(n^{-3/4}dD^n(n^{1/4}t)+|n^{1/2}+n^{1/4}\wt{Z}^n_2(s_1)|ds_1\Big)\nonumber\\
  \le &n^{-3/4}D^n(n^{1/4}t)-n^{-3/4}D^n(n^{1/4}s)+\sqrt{n}(t-s)\nonumber\\
  =&\wt{M}^n_S(t)-\wt{M}^n_S(s)+2\sqrt{n}(t-s)+n^{1/4}\int^t_s\wt{Z}^n_2(s_1)ds_1\nonumber\\
    \le&2\|\wt{M}^n_S\|_t+2\sqrt{n}(t-s),
\end{align}
for all $0\le s\le t$.
Plugging \eqref{Eq:SBU1} in \eqref{Eq:SBU12} gives
\begin{equation}
  \label{EqU1Second}
  \left|n^{1/2}\int^t_{\wt{T}^n\wedge t}\left(n^{-1/2}\theta\wt{w}^n(s_1-)-t+s_1\right)^+d\wt{M}_S^n(s_1)\right|\le2\|\theta\wt{w}^n\|_t(\|\wt{M}^n_S\|_t+\|\theta\wt{w}^n\|_t).
\end{equation}
It follows from Lemma \ref{Lem:SBw} and Assertion (b) that the right-hand side of \eqref{EqU1Second} is tight in $\RR$.
Next, plugging \eqref{EqU1First} and \eqref{EqU1Second} in \eqref{Eq:U1int} gives
\begin{equation*}
  \Big\|n^{1/2}\int^\cdot_0\wt{U}^n_1(s)ds\Big\|_t\le \ep^n(t)+2\|\theta\wt{w}^n\|_t(\|\wt{M}^n_S\|_t+\|\theta\wt{w}^n\|_t) \qforallq t \ge 0.
\end{equation*}
Therefore, $\{n^{1/2}\int^\cdot_0\wt{U}^n_1(s)ds : n \ge 1\}$ is tight in $\D$ for any $t\ge0$.
\end{proof}

\begin{proof}[Proof of Assertion ($d$)]
For $\tau > 0$, $K > 0$ and $F^n(s,t)$ in \eqref{Eq:F}, let
\bes
\mathcal M^n(\tau, K; s,t) := \sup_{\substack{s,t\in[0, n^{1/4}\tau],\\t-s\in[0,n^{-1/4}K]}}|F^n(s,t)|,
\ees
and observe that
\begin{equation*}
  n^{1/4}\|\wt{U}^n_2\|_\tau \le 2n^{-1/2} \mathcal M^n(\tau, \|\wt{w}^n\|_\tau; s,t).
\end{equation*}
Thus, the proof of the assertion will follow if we show that, for any $\ep > 0$,
\begin{equation*}
  P\big(n^{-1/2}\mathcal M^n(\tau, \|\wt{w}^n\|_\tau; s,t)>\epsilon \big)\rightarrow0 \qasq n \tinf,
\end{equation*}
which is what we prove next.

Fix $\epsilon>0$. Since $\{\wt{w}^n : n \ge 1\}$ is stochastically bounded in $\D$ by Lemma \ref{Lem:SBw},
we can find a $K := K(\ep) > 2$, such that $P(\|\wt{w}^n\|_\tau>K)<\epsilon$ for any $n$.
Notice that the value of $F^n(s,t)$ only depends on arrival times at $(s,t]$ and the patience times of those arrivals,
implying that $F^n$ is time-invariant in its two parameters, in the sense that $F^n(s,t)$ and $F^n(s+r,t+r)$ have the same law for any $r\ge0$.

Let $J^n$ be the smallest integer satisfying $J^n+1\ge n^{1/2}\tau/K$, and let
\begin{equation*}
  I^n_j:=[n^{-1/4}K(j-1),n^{-1/4}K(j+1)]\cap [0,n^{1/4}\tau],\qforq j=1,2,\cdots, J^n.
\end{equation*}
Observe that, for $j=1,2,\cdots, J^n-1$, the length of each $I^n_j$ is $2n^{-1/4}K$ and the length of $I^n_j \cap I^n_{j+1}$ is $n^{-1/4}K$.
It holds that,
for any $s, t \in [0,n^{1/4}\tau]$ for which $[s,t]\subseteq [0,n^{1/4}\tau]$ and $t-s<n^{-1/4}K$,
the interval $[s,t]$ is contained in at least one of the intervals $\{I^n_j:j=1,2,\cdots, J^n\}$. Therefore,
\begin{align}
  P(n^{-1/2} \mathcal M^n(\tau, K; s,t)>\epsilon)
  \le& \sum^{J^n}_{j=1}P(n^{-1/2}\sup_{\substack{s,t\in I^n_j,s\le t}}|F^n(s,t)|>\epsilon)\nonumber\\
  \label{Eq:Fconv}
  \le & J^nP(n^{-1/2}\sup_{s, t\in I^n_1, s\le t}|F^n(s,t)|>\epsilon),
\end{align}
where the $2$nd inequality is due to the aforementioned time-invariance property of $F^n$, which implies that all the probabilities in the sum,
except possibly the last one (which may be smaller than the rest), are equal.

We need the following lemma, whose proof appears in Section \ref{SecProofsLemmas}.
\begin{lemma} \label{Lem:FMartingales}
For each $t>0$, $\{F^n(s,t) : s \in[0, t]\}$ is a martingale, and $\{e^{\theta t}\sup_{s\in[0,t]}|F^n(s,t)|:t\ge0\}$ is a submartingale,
both with respect to their augumented natural filtration.
\end{lemma}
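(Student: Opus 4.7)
The plan is to identify $F^n(\cdot,t)$ as a compensated counting process in $s$ (for fixed $t$), and to extract an exponential martingale in $t$ (for fixed $s$) that drives the submartingale inequality for the supremum. For the first assertion, rewrite
\begin{equation*}
F^n(s,t)=N_t(s)-A_t(s), \qquad N_t(s):=\int_0^s 1\{E^n_{A^n(u)}+T^n_{A^n(u)}>t\}\,dA^n(u),
\end{equation*}
with $A_t(s):=\theta^{-1}\lambda^n(e^{-\theta(t-s)}-e^{-\theta t})$. Relative to the filtration $\mathcal H_s:=\sigma(A^n(u),T^n_k:\,u\le s,\ E^n_k\le u)$, the jump times of $N_t(\cdot)$ arrive at Poisson rate $\lambda^n$, and the conditional probability that a jump at time $s$ contributes a $1$ equals $P(T>t-s)=e^{-\theta(t-s)}$ by the independence and exponentiality of the $T^n_k$. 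Thus $N_t(\cdot)$ has $\mathcal H$-predictable intensity $\lambda^n e^{-\theta(t-s)}$, whose integral from $0$ to $s$ is precisely $A_t(s)$. Hence $F^n(\cdot,t)$ is an $\mathcal H$-martingale, and therefore also a martingale with respect to its (smaller) augmented natural filtration by the tower property; the integrability $E|F^n(s,t)|\le E[N_t(s)]+\theta^{-1}\lambda^n<\infty$ is automatic.

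For the second assertion, the key observation is that for $T\sim\mathrm{Exp}(\theta)$, the process $t\mapsto e^{\theta t}1\{T>t\}$ is a martingale in $t$: by memorylessness, $E[e^{\theta t}1\{T>t\}\mid\mathcal F_{t_0}]=e^{\theta t}e^{-\theta(t-t_0)}1\{T>t_0\}=e^{\theta t_0}1\{T>t_0\}$ for $t_0\le t$. Applying this to each arrival $k$, the process $Y^{(k)}_t:=e^{\theta t}1\{E^n_k+T^n_k>t\}$ is an $\mathcal H_t$-martingale on $\{t\ge E^n_k\}$. Fixing $s$ and writing
\begin{equation*}
e^{\theta t}F^n(s,t)=\sum_{k\le A^n(s)} Y^{(k)}_t-\theta^{-1}\lambda^n\bigl(e^{\theta s}-1\bigr),\qquad t\ge s,
\end{equation*}
exhibits $t\mapsto e^{\theta t}F^n(s,t)$ as an $\mathcal H_s$-measurable finite sum of $\mathcal H_t$-martingales, plus a constant in $t$; hence it is itself an $\mathcal H_t$-martingale on $t\ge s$.

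Finally, for $t_1<t_2$, set $G(t):=\sup_{s\le t}|F^n(s,t)|$ and chain the standard inequalities:
\begin{align*}
E\bigl[e^{\theta t_2}G(t_2)\mid\mathcal H_{t_1}\bigr]
&\ge E\Bigl[\sup_{s\le t_1}\bigl|e^{\theta t_2}F^n(s,t_2)\bigr|\,\Big|\,\mathcal H_{t_1}\Bigr]
\ge \sup_{s\le t_1}E\Bigl[\bigl|e^{\theta t_2}F^n(s,t_2)\bigr|\,\Big|\,\mathcal H_{t_1}\Bigr]\\
&\ge \sup_{s\le t_1}\bigl|E[e^{\theta t_2}F^n(s,t_2)\mid\mathcal H_{t_1}]\bigr|
=\sup_{s\le t_1} e^{\theta t_1}\bigl|F^n(s,t_1)\bigr|=e^{\theta t_1}G(t_1),
\end{align*}
using (i) monotonicity of the sup in its range, (ii) $E\sup\ge\sup E$ applied indexwise in the deterministic parameter $s$, (iii) conditional Jensen, and (iv) the $t$-martingale property from the second paragraph. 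Since $e^{\theta t}G(t)$ is adapted to its own (smaller) natural filtration, the submartingale property descends by tower. The main conceptual step, and the only non-routine ingredient, is spotting the exponential martingale $e^{\theta t}1\{T>t\}$; once it is in hand, both halves of the lemma reduce to standard compensator and Jensen-type manipulations.
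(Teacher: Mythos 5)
Your overall architecture is sound and closely parallels the paper's: both rely on computing/identifying the $s$-martingale via the exponential tail of the patience times, and both rely on the exponential-compensation martingale in $t$ to push an $\varepsilon$-optimal index past a conditional Jensen step. The per-customer decomposition $e^{\theta t}F^n(s,t)=\sum_{k\le A^n(s)}Y^{(k)}_t-\theta^{-1}\lambda^n(e^{\theta s}-1)$ is a cleaner way to isolate the key $t$-martingale than the direct conditional-expectation computation the paper uses. The intensity/compensator phrasing of the $s$-martingale is also a perfectly valid reformulation of what the paper does.

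However, there is a genuine error in the filtration you use for the second half. You set $\mathcal H_s=\sigma(A^n(u),T^n_k:\,u\le s,\ E^n_k\le u)$, which, read literally, reveals the full patience $T^n_k$ of every customer that has already arrived. That is harmless (indeed convenient) for the first assertion, because the increment $F^n(s_2,t)-F^n(s_1,t)$ only involves arrivals on $(s_1,s_2]$, whose patience is independent of $\mathcal H_{s_1}$. But it ruins the second assertion: if $T^n_k$ is $\mathcal H_{t_1}$-measurable for $E^n_k\le t_1$, then for such a $k$ still alive at $t_1$,
\begin{equation*}
E\big[e^{\theta t_2}1\{E^n_k+T^n_k>t_2\}\,\big|\,\mathcal H_{t_1}\big]=e^{\theta t_2}1\{E^n_k+T^n_k>t_2\},
\end{equation*}
which is either $e^{\theta t_2}>e^{\theta t_1}$ or $0<e^{\theta t_1}$, never $e^{\theta t_1}$. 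So $Y^{(k)}_t$ is \emph{not} an $\mathcal H_t$-martingale, and the equality $E[e^{\theta t_2}F^n(s,t_2)\mid\mathcal H_{t_1}]=e^{\theta t_1}F^n(s,t_1)$ in your final chain is false. The paper avoids this by conditioning only on the indicator information $1\{T^n_k+E^n_k<t\}$ (its filtration $\mathcal G^n_t$), i.e.\ on whether a customer has already abandoned but not on when an alive customer will abandon; that is precisely the filtration to which your own per-customer memorylessness identity $E[e^{\theta t}1\{T>t\}\mid\mathcal F_{t_0}]=e^{\theta t_0}1\{T>t_0\}$ actually refers. Swapping $\mathcal H$ for this smaller filtration in your final chain fixes the argument; one should also confirm, as the paper does via an $\varepsilon$-optimal random time, that the uncountable supremum in the ``$E\sup\ge\sup E$'' step is handled, but right-continuity of $F^n(\cdot,t)$ takes care of that.
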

Employing Lemma \ref{Lem:FMartingales}, we have
\begin{align}
  \label{Eq:Fconv2}
  &P\big(n^{-1/2}\sup_{0\le s\le t\le 2n^{-1/4}K}|F^n(s,t)|>\epsilon\big)\nonumber\\
  & \le P\big(\sup_{t\in[0, 2n^{-1/4}K]}e^{\theta t}\sup_{s\in[0,t]}|F^n(s,t)|>n^{1/2}\epsilon\big)\nonumber\\
  & \le \epsilon^{-6}n^{-3}E\bigg[ \Big(e^{2\theta n^{-1/4}K}\sup_{s\in[0,2n^{-1/4}K]}|F^n(s,2n^{-1/4}K)|\Big)^6\bigg]\nonumber\\
  & \le \epsilon^{-6}e^{12\theta n^{-1/4}K}(6/5)^6E\left[ \big(n^{-1/2}F^n(2n^{-1/4}K,2n^{-1/4}K)\big)^6\right],
\end{align}
where the second and the last inequalities follow from Doob's $L^p$-maximal inequality (e.g., \cite[Theorem 1.7]{revuz2013continuous}) for $p=6$,
for the (sub)martingales in Lemma \ref{Lem:FMartingales}.

It remains to compute $E [(n^{-1/2}F^n(2n^{-1/4}K,2n^{-1/4}K))^6]$ in order to bound the right-hand side of \eqref{Eq:Fconv}.
Note that, conditional on $A^n(t)$, the vector of arrival times $(E^n_1, E^n_2,\cdots,E^n_{A^n(t)})$ is distributed as the vector of ordered statistic
of $A^n(t)$ uniform random variables on $[0,t]$.
Therefore, 
\begin{equation*}
  \int^{t}_{0}1\{E^n_{A^n(s)}+T^n_{A^n(s)}\ge t\}dA^n(s)
\end{equation*}
is, conditional on $A^n(t)$, distributed like $\sum^{A^n(t)}_{k=1}B_k(t)$, where, for each $t \ge 0$,
$\{B_k(t) : k \ge 1\}$ is a sequence of i.i.d.\ Bernoulli random variables, each distributed like
$B_t := 1\{U+T\ge t\}$, where $U$ is uniform on $[0,t]$, and $T$ is exponentially distributed with rate $\theta$ that is independent of $U$.
Thus,
$E[B_t] = (\theta t)^{-1}(1-e^{-\theta t})$, and 
\begin{equation*}
F^n(t,t) \deq \sum^{A^n(t)}_{k=1}B_k(t)-E[B_t]\lm^nt 
\end{equation*}
Let $\bar b_n$ denote $E[B_t]$ for $t=n^{-1/4}K$;
\begin{equation*}
  \bar{b}_n:=E[B_{n^{-1/4}K}]=(n^{-1/4}\theta K)^{-1}(1-e^{-n^{-1/4}\theta K}).
\end{equation*}
Let $\varphi_n$ denote the moment generating function of $n^{-1/2}F^n(n^{-1/4}K,n^{-1/4}K)$.
Using the identity $E[a^{A^n(t)}]=\exp((a-1)\lm^n t)$ for each $a>0$,
\begin{align*}
  \varphi_n(s):=&E[\exp(sn^{-1/2}F^n(n^{-1/4}K,n^{-1/4}K))]\\
  =&E\Big[\big(E[e^{n^{-1/2}sB_1}]\big)^{A^n(n^{-1/4}K)}\Big]\exp(-s\lm^nn^{-3/4}K\bar{b}_n)\\
  =&E\Big[\big(\bar{b}_ne^{n^{-1/2}s} +1-\bar{b}_n\big)^{A^n(n^{-1/4}K)}\Big]\exp(-s\lm^nn^{-3/4}K\bar{b}_n)\\
  =& \exp\Big(n^{-1/4}\lm^nK\bar{b}_n(e^{n^{-1/2}s}-1) \Big)\exp(-s\lm^nn^{-3/4}K\bar{b}_n)\\
  =&\exp(\gamma^n(e^{n^{-1/2}s}-1-n^{-1/2}s)),\qforallq s\ge0,
\end{align*}
where
\begin{equation*}
  \gamma^n:=n^{-1/4}\lm^n K\bar{b}^n=\theta^{-1}\lm^n(1-e^{-n^{-1/4}\theta K})=O(n^{3/4}).
\end{equation*}
We claim that $\varphi^{(k)}_n(0)=O(n^{-k/8})$ for all $k\in\ZZ_+$, where $\varphi^{(k)}_n(s)$ denotes the $k$th derivative of $\varphi_n$
taking value at $s$. We let
\begin{equation*}
  g_n(s):=\gamma^n\big(e^{n^{-1/2}s}-1-n^{-1/2}s\big),\qforq s\ge0,
\end{equation*}
so that $\varphi_n=\exp(g_n)$, and note that $g_n(0)=g'_n(0)=0$ and $g^{(k)}_n(0)=O(n^{3/4-k/2})$ for $k \ge 2$.

We prove this latter claim by induction. First, for $k=1$, we have
\begin{equation*}
  \varphi'_n(0)=\varphi_n(0)g'_n(0)=0=O(n^{-1/8}).
\end{equation*}
Next, take the induction hypothesis that $\varphi^{(m)}_n(0)=O(n^{-m/8})$ for all $m \le k$, and consider the $(k+1)$st derivative:
\begin{align*}
  \varphi^{(k+1)}_n(0)=&(\varphi_ng'_n)^{(k)}(0)\\
  =&\sum^{k}_{j=0}\binom{k}{j}\varphi^{(k-j)}_n(0)g^{(j+1)}_n(0)\\
  =&\sum^{k}_{j=1}\binom{k}{j}\varphi^{(k-j)}_n(0)n^{1/4-j/2}\\
  =&\sum^{k}_{j=1}O(n^{-1/8(k-j)})n^{1/4-j/2}\\
  =&O(n^{-1/8(k-1)})n^{-1/4} \\
  =&O(n^{-1/8(k+1)}).
\end{align*}
This proves our claim that $\varphi^{(k)}_n(0)=O(n^{-k/8})$ for all $k\in\ZZ_+$. In particular, taking $k=6$ gives
\begin{equation*}
  E\Big[\big(n^{-1/2}F^n(n^{-1/4}K,n^{-1/4}K)\big)^6\Big]=O(n^{-3/4}).
\end{equation*}
Using this fact in \eqref{Eq:Fconv}, and then in the upper bound in \eqref{Eq:Fconv2}, we obtain
\begin{equation*}
  P(n^{-1/2}\mathcal M^n(\tau, K; s,t)>\epsilon)=O(n^{-1/4}),\qforallq\ep>0. \qedhere
\end{equation*}

\end{proof}

\subsection{Proofs of Propositions \ref{Prop:Tightness} and \ref{Prop:V}} \label{subsecProofProps}

To prove Propositions \ref{Prop:Tightness} and \ref{Prop:V}, we need the following two lemmas.
The proof of Lemma \ref{Lem:V} is given here, since it is needed for proving Proposition \ref{Prop:D}. The proof of Lemma \ref{Lem:SBXZ}
appears in Section \ref{SecProofsLemmas}, together with the proofs of the rest of the supporting lemmas of this section.
\begin{lemma}
  \label{Lem:V}
  If \eqref{Ass:Ib} holds, then as $n\tinf$,
  \begin{gather}
    \label{Eq:VF}
    \wt{V}^n(\cdot)-\frac{1}{2}\int^\cdot_0\theta^{2}(\wt{w}^n)^2(s)ds+\int^\cdot_0\theta(\wt{Z}^n_1(s)-\wt{Z}^n_0(s))\wt{w}^n(s)ds\Ra0\eta \mbox{ in }\D.
  \end{gather}
\end{lemma}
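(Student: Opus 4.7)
The plan is to expand $V^n(n^{1/4}t)$ from \eqref{Eq:V} directly, isolate the two contributions of order $n^{3/4}$, and verify that everything else is $o_P(n^{3/4})$. The essential algebraic observation is that $w^n(s) > 0$ forces $Q^n(s) > 0$, hence $Z^n(s) = n$ and $Z^n_2(s) = n - Z^n_1(s)$, while when $w^n(s) = 0$ the integrand $(\theta w^n(s))\wedge(t-s)\,Z^n_2(s)$ vanishes. This lets me replace $Z^n_2$ in the first integral of \eqref{Eq:V} by $n - Z^n_1$ in an exact identity.

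Next I would perform three standard truncations. First, dropping the lower cutoff $T^n_0\wedge t$ costs at most $\theta n\|w^n\|_t T^n_0 = O_P(n^{3/4} T^n_0)$ after the $n^{1/4}$ time-rescaling, which is $o_P(n^{3/4})$ by Proposition \ref{Prop:F}(a) and Lemma \ref{Lem:SBw}. Second, replacing $(\theta w^n(s))\wedge(t-s)$ by $\theta w^n(s)$ introduces an error supported on $s \in [t - \theta\|w^n\|_t, t]$ and bounded there by $\theta\|w^n\|_t$, giving total error $O(n\theta^2\|w^n\|_t^2) = O_P(n^{1/2}\|\wt w^n\|_t^2) = o_P(n^{3/4})$. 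Third, Taylor-expanding $1-e^{-\theta w^n} = \theta w^n - \tfrac{1}{2}(\theta w^n)^2 + O((\theta w^n)^3)$ contributes a cubic remainder of order $\lambda^n\int(\theta w^n)^3\,ds \sim n\cdot n^{-1/2}\int^t_0(\wt w^n(u))^3\,du = O_P(n^{1/2}) = o_P(n^{3/4})$ in scaled units.

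The payoff is the decomposition
\begin{equation*}
V^n(n^{1/4}t) = (n-\lambda^n)\theta\int^{n^{1/4}t}_0 w^n(s)\,ds - \theta\int^{n^{1/4}t}_0 w^n(s)Z^n_1(s)\,ds + \tfrac{\lambda^n\theta^2}{2}\int^{n^{1/4}t}_0(w^n(s))^2\,ds + o_P(n^{3/4}).
\end{equation*}
The key cancellation, and the only delicate bookkeeping point, is that between $n\theta\int w^n$ (coming from the $n$ that replaces $Z^n_2$ in the first integral) and $\lambda^n\theta\int w^n$ (coming from the linear Taylor term of the second integral), leaving the residual $(n-\lambda^n)\theta\int w^n = O_P(\sqrt n\,\int^t_0\wt w^n(u)du) = o_P(n^{3/4})$ by \eqref{Square root} and Lemma \ref{Lem:SBw}. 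Changing variables $s = n^{1/4}u$ in the two surviving integrals and dividing by $n^{3/4}$ yields $\wt V^n(t) = -\theta\int^t_0\wt w^n(u)\wt Z^n_1(u)\,du + \tfrac{\theta^2}{2}\int^t_0(\wt w^n(u))^2\,du + o_P(1)$.

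To match the $-\wt Z^n_0$ term appearing in the statement, it remains to show $\int^t_0\wt Z^n_0(u)\wt w^n(u)\,du \Ra 0$. The bound $\int^{n^{1/4}t}_0 Z^n_0(s)\,ds \le (1+\theta)L^n(0) + \theta T^n_0 Q^n(0)$ already derived in the proof of Proposition \ref{Prop:F}(a), divided by $n$, gives $\int^t_0\wt Z^n_0(u)\,du \le n^{-1/4}(1+\theta)\wt L^n(0) + \theta T^n_0(Q^n(0)/n)$, which is $o_P(1)$ under \eqref{Ass:Ib} (yielding $\wt L^n(0)\Ra 0$ and $Q^n(0)/n = O_P(1)$) together with $T^n_0\Ra 0$ (Proposition \ref{Prop:F}(a)). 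Multiplying by $\|\wt w^n\|_t = O_P(1)$ (Lemma \ref{Lem:SBw}) completes the argument; the splitting $\wt Z^n_1 = (\wt Z^n_1 - \wt Z^n_0) + \wt Z^n_0$ is presumably retained in the statement because the $\wt Z^n_1 - \wt Z^n_0$ form is what gets matched against a subsequent identity $\wt Z^n_1 - \wt Z^n_0 \approx \theta\wt w^n$ in the proof of Proposition \ref{Prop:V}.
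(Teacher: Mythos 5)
Your proposal is correct and follows essentially the same route as the paper's proof: bound the $[0,T^n_0]$ truncation error via $T^n_0 \|\wt w^n\|_t \Rightarrow 0$, bound the $(\cdot)^+$ remainder from linearizing the $\wedge$ by $n^{-1/4}\theta^2\|\wt w^n\|^2_t$, Taylor-expand $1-e^{-\theta w^n}$ (the paper packages the remainders as continuous functions $f_1,f_2$ where you keep a big-$O$ cubic term), and discard the $\wt Z^n_0\wt w^n$ integral via Proposition~\ref{Prop:F}(a). The only organizational difference is that you substitute $Z^n_2 = n - Z^n_1$ on the set $\{w^n>0\}$ at the outset, whereas the paper retains $\wt Z^n_2$ until the end and converts via the identity $\wt Z^n_1 \wt w^n = -\wt Z^n_2 \wt w^n$ coming from $(Z^n-n)w^n = 0$; these are the same observation, and your version makes the $(n-\lambda^n)\theta\int w^n$ cancellation residual---of order $\sqrt{n}\beta^n$, hence $o_P(n^{3/4})$---explicit rather than absorbing it into the Taylor-remainder step as the paper does.
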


\begin{lemma}
  \label{Lem:SBXZ}
  If \eqref{Ass:Ib} holds, then $\{\wt{X}^n : n \ge 1\}$, $\{\wt{Z}^n : n \ge 1\}$, $\{\wt{Z}^n_1-\wt{Z}^n_{0} : n \ge 1\}$,
  and $\{\wt{Z}^n_2+\wt{Z}^n_0 : n \ge 1\}$ are stochastically bounded in $\D$.
\end{lemma}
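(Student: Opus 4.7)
The plan is to establish the four stochastic boundedness claims in the order (i) $\wt Z^n_1 - \wt Z^n_0$, (ii) $\wt V^n$, (iii) $\wt Z^n$, (iv) $\wt X^n$ and $\wt Z^n_2 + \wt Z^n_0$, by combining the martingale representation \eqref{Eq:sF} with the algebraic identities $\wt X^n = \wt Z^n + \wt Q^n$ and $\wt Z^n = \wt Z^n_1 + \wt Z^n_2$, and systematically invoking Lemmas \ref{Lem:SBQ}, \ref{Lem:SBw}, \ref{Lem:V} and Proposition \ref{Prop:F}.

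For (i) I would rewrite $Z^n_1 - Z^n_0$ via \eqref{Eq:ssc} as a compensator integral plus the martingale $U^n_1$. The indicator in \eqref{Eq:ssc} forces the integrand to be supported on $s \in [(t-\theta\|w^n\|_t)^+, t]$, so the compensator is at most $n\theta\|w^n\|_t$, and scaling yields $\theta\|\wt w^n\|_t = O_P(1)$ by Lemma \ref{Lem:SBw}. For $U^n_1$ I would bound $|U^n_1(t)|$ by the total variation of $M^n_S$ over the same window, namely $[D^n(t) - D^n((t - \theta\|w^n\|_t)^+)] + n\theta\|w^n\|_t$; uniform-in-$t$ control of the first piece---an increment of the counting process $D^n$ over a random window of length $O_P(n^{-1/4})$---via a standard maximal inequality for Poisson-bounded counting processes then gives $\|\wt U^n_1\|_T = O_P(1)$.

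With (i) in hand, Lemma \ref{Lem:V} yields $\wt V^n = \tfrac{\theta^2}{2}\int_0^\cdot (\wt w^n)^2 ds - \theta\int_0^\cdot (\wt Z^n_1 - \wt Z^n_0)\wt w^n\,ds + o_P(1)$, which by (i) and Lemma \ref{Lem:SBw} is bounded in $\|\cdot\|_t$ by $t\bigl(\tfrac{\theta^2}{2}\|\wt w^n\|_t^2 + \theta\|\wt w^n\|_t\|\wt Z^n_1 - \wt Z^n_0\|_t\bigr) + o_P(1) = O_P(1)$, proving (ii). Substituting into \eqref{Eq:sF}, and using that all remaining terms on the right-hand side are $o_P(1)$ by Proposition \ref{Prop:F} while $\wt X^n(0) = O_P(1)$ by \eqref{Ass:Ib}, I obtain
\[
\wt X^n(t) + n^{1/4}\int_0^t \wt Z^n(s)\,ds = O_P(1) \quad \text{uniformly on compacts.}
\]
Using $\wt X^n = \wt Z^n + \wt Q^n$ together with Lemma \ref{Lem:SBQ}, this rearranges to $(-\wt Z^n(t)) + n^{1/4}\int_0^t(-\wt Z^n(s))\,ds = O_P(1)$ uniformly on compacts; both summands are nonnegative because $\wt Z^n\le 0$, so each is individually $O_P(1)$, proving (iii). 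Claim (iv) then follows at once from $\wt X^n = \wt Z^n + \wt Q^n$ and $\wt Z^n_2 + \wt Z^n_0 = \wt Z^n - (\wt Z^n_1 - \wt Z^n_0)$.

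The main obstacle is the uniform-in-$t$ control of $\wt U^n_1$ in step (i): because the integrand in the stochastic integral defining $U^n_1$ depends explicitly on $t$, $\wt U^n_1$ is not a martingale in its time argument, and one cannot simply invoke Doob's inequality to upgrade the pointwise bound $E[(\wt U^n_1(t))^2] \to 0$ (which follows easily from the quadratic variation) to a supremum bound. The workaround via increments of the counting process $D^n$ over short random windows is routine but somewhat technical. Once it is in place, the remainder of the argument is essentially bookkeeping built on the martingale representation, the algebraic identities among the state descriptors, and the previously established supporting results.
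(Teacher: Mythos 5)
Your overall route matches the paper's: bound $\wt U^n_1$ by restricting its integrand to the random window of width $\theta\|\wt w^n\|_t$, conclude that $\{\wt Z^n_1-\wt Z^n_0\}$ is $O_P(1)$, feed the scaled representation \eqref{Eq:sF} together with the stochastic boundedness of $\wt Q^n$ (Lemma~\ref{Lem:SBQ}) into a rearrangement that yields $\wt X^n,\wt Z^n=O_P(1)$, and close with the algebraic identities. Your nonnegative-summands trick in step~(iii) is a valid and slightly cleaner variant of the paper's sandwich $\xi^n+\varepsilon^n\le\wt X^n\le\wt Q^n$; both hinge on $\wt Z^n\le 0$ and on $\wt Q^n=O_P(1)$.

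The one place where you take a genuinely different---and unnecessarily heavy---route is in controlling $\wt U^n_1$. You correctly note that $U^n_1$ is not a martingale in $t$, so Doob's inequality does not apply directly, and you propose to bound $|\wt U^n_1(t)|$ by the scaled increment of $D^n$ over the window plus drift, then uniformize over $t$ via a ``standard maximal inequality for Poisson-bounded counting processes.'' That last step is the real content, and it is not routine: a rigorous sup-over-sliding-windows bound for the doubly stochastic process $D^n$ would need a grid or chaining argument that you do not supply. The paper avoids it entirely. Display \eqref{Eq:SBU1}, already established in the proof of Proposition~\ref{Prop:F}(c), uses the identity \eqref{EqwtMS}, namely $n^{-3/4}D^n(n^{1/4}\cdot)=\wt M^n_S+n^{1/4}\int_0^\cdot(n^{1/4}+\wt Z^n_2)\,ds$ with the drift density bounded by $\sqrt n$ (since $-n^{1/4}\le\wt Z^n_2\le 0$), to give
\begin{equation*}
\int_s^t d|\wt M^n_S(s_1)|\le 2\|\wt M^n_S\|_t+2\sqrt n\,(t-s),
\end{equation*}
so that with $t-s=n^{-1/2}\theta\|\wt w^n\|_t$ the total variation is at most $2\|\wt M^n_S\|_t+2\theta\|\wt w^n\|_t$, which is $O_P(1)$ by Proposition~\ref{Prop:F}(b) and Lemma~\ref{Lem:SBw}. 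The supremum over $t$ then comes for free from the sup-norms of $\wt M^n_S$ and $\wt w^n$; no maximal inequality for $D^n$ is needed, and the ``main obstacle'' you identify dissolves.
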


\begin{proof}[Proof of Lemma \ref{Lem:V}]
Using the definition of $V^n$ in \eqref{Eq:V}, we have
\begin{align}
  \label{Eq:Vt}
  \wt{V}^n(t)=&\int^t_{\wt{T}^n_0\wedge t} \big[( \theta\wt{w}^n(s))\wedge(n^{1/2}t-n^{1/2}s)\big](\wt{Z}^n_2(s)+n^{1/4})ds\nonumber\\
  &-\int^t_0n^{-1/2}\lambda^n(1-e^{-n^{-1/4}\theta\wt{w}^n(s)})ds\nonumber\\
  =&\int^t_0\theta\wt{w}^n(s)\wt{Z}^n_2(s)ds + \frac{1}{2}\int^t_0(\theta\wt{w}^n(s))^2ds - \int^{\wt{T}^n_0\wedge t}_0\theta\wt{w}^n(s)(\wt{Z}^n_2(s)+n^{1/4})ds\nonumber\\
    &-\int^t_{\wt{T}^n_0\wedge t}(n^{-1/2}\theta\wt{w}^n(s)-t+s)^+(n^{3/4}+n^{1/2}\wt{Z}^n_2(s))ds\nonumber\\
    &+\int^t_0\left (n^{1/4}\theta\wt{w}^n(s)-\frac{1}{2}(\theta\wt{w}^n(s))^2-n^{-1/2}\lambda^n(1-e^{-n^{-1/4}\theta\wt{w}^n(s)})\right)ds.
\end{align}
Noting that $0\le\wt{Z}^n_2+n^{1/4}\le n^{1/4}$ and that $n^{1/4}\wt{T}^n_0=T^n_0\ra0$ in $\RR$ as $n\tinf$.
By Proposition \ref{Prop:F}(a),
we have that, for all $t > 0$,
\begin{equation}
  \label{Eq:Vt1}
  \left\|\int^{\wt{T}^n_0\wedge \cdot}_0\theta\wt{w}^n(s)(\wt{Z}^n_2(s)+n^{1/4})ds\right\|_t\le n^{1/4}\wt{T}^n_0\|\wt{w}^n\|_{t}\Ra0,\qasq n \tinf.
\end{equation}
Next, using the fact that
\begin{equation*}
  (n^{-1/2}\theta\wt{w}^n(s_1)-t+s_1)^+\le n^{-1/2}\theta\|\wt{w}^n\|_t 1\{s_1\ge t- n^{-1/2}\theta\|\wt{w}^n\|_t\},\mbox{ for }s_1\in[0,t].
\end{equation*}
We have
\begin{align}
  \label{Eq:Vt3}
  0\le&\int^t_{\wt{T}^n_0\wedge t}\left(n^{3/4}+n^{1/2}\wt{Z}^n_2(s_1)\right)(n^{-1/2}\theta\wt{w}^n(s_1)-t+s_1)^+ds_1\nonumber\\
  \le& \int^t_{0}n^{1/4}\theta \|\wt{w}^n\|_t 1\{s_1\ge t-n^{-1/2}\theta\|\wt{w}^n\|_t\}ds_1\nonumber\\
  = &n^{1/4}\theta\|\wt{w}^n\|_t(t-(t-n^{-1/2}\theta\|\wt{w}^n\|_t)^+)\nonumber\\
  \le& n^{-1/4}\theta^{2}(\|\wt{w}^n\|_t)^2\Ra0\eta \mbox{ in }\D,\mbox{ as }n\tinf,
\end{align}
where the equality follows from
\begin{equation*}
  \int^b_a1\{s\ge c\}ds=b\vee c- a\vee c \qforallq a,b,c \in \RR.
\end{equation*}
Define the functions
\bequ \label{f}
\bsplit
  f_1(x) & := \left\{\begin{array}{ll}
  (e^{-x}-1+x)/x & \text{if } x \ne 0 \\
  0  &  \text{if } x = 0,
\end{array} \right. \\ \\
f_2(x) & :=  \left\{\begin{array}{ll}
(e^{-x}-1+x-\frac{1}{2}x^2)/x^2 & \text{if } x \ne 0 \\
0 & \text{if } x = 0,
\end{array}\right.
\end{split}
\eeq
and note that both $f_1$ and $f_2$ are continuous at $\RR$.
It follows from Lemma \ref{Lem:SBw} that $n^{-1/4}\wt{w}^n\Rightarrow 0\eta$ in $\D$ as $n\tinf$, and so
$f_i(n^{-1/4}\wt{w}^n)\Rightarrow 0\eta$ in $\D$ as $n\tinf$, for $i=1,2$, by virtue of the continuous mapping theorem.
Writing $\lambda^n=n-\beta^n\sqrt{n}$, we have
\begin{align}
  &n^{1/4}\theta\wt{w}^n-1/2(\theta\wt{w}^n)^2-n^{-1/2}\lambda^n(1-e^{-n^{-1/4}\theta\wt{w}^n})\nonumber\\
  \label{Eq:Vt2}
  =&(\theta\wt{w}^n)^2f_2(\theta n^{-1/4}\wt{w}^n)-n^{-1/4}\beta^n\theta\wt{w}^n(1+f_1(n^{-1/4}\theta\wt{w}^n))\Ra0\eta \mbox{ in }\D \mbox{ as }n\tinf.
\end{align}
Using the weak limits established in \eqref{Eq:Vt1}, \eqref{Eq:Vt3}, and \eqref{Eq:Vt2} in \eqref{Eq:Vt}, gives
\begin{equation}
  \label{EqVZ2}
\wt{V}^n(\cdot)-\frac{1}{2}\int^\cdot_0\theta^{2}(\wt{w}^n)^2(s)ds-\int^\cdot_0\theta\wt{Z}^n_2(s)\wt{w}^n(s)ds\Ra0\eta \qinq \D \qasq n \tinf.
\end{equation}
Now, it follows from \eqref{Eq:ZW} and $\wt{Z}^n=\wt{Z}^n_1+\wt{Z}^n_2$ that $\wt{Z}^n\wt{w}^n=0\eta$, so that
\begin{equation}  \label{EqZW1}
\wt{Z}^n_1\wt{w}^n=-\wt{Z}^n_2\wt{w}^n_2.
\end{equation}
Finally, by Proposition \ref{Prop:F}(a) and Lemma \ref{Lem:SBw},
\begin{equation*}
  \Big\|\int^\cdot_0\wt{Z}^n_0(s)\wt{w}^n(s)ds\Big\|_t\le\|\wt{w}^n\|_t\int^t_0\wt{Z}^n_0(s)ds\Ra 0\eta\qinq \D.
\end{equation*}
This, together with \eqref{EqVZ2} and \eqref{EqZW1}, gives \eqref{Eq:VF}.
\end{proof}

\begin{proof}[Proof of Proposition \ref{Prop:Tightness}]
For $x \in D$, $\tau>0$ and $\delta > 0$, consider the modulus of continuity
\begin{equation*}
  v_\tau(x,\delta):=\sup_{t-s\le\delta}\{|x(s)-x(t)| : 0 \le s < t \le \tau\}.
\end{equation*}
Given the assumed convergence of the sequence of initial conditions $\{\wt Q^n(0) : n \ge 1\}$ the statement of the proposition will follow
from \cite[Theorem 15.5]{billingsley2009convergence} once we show that
\begin{equation}
  \label{EqvprimeQ}
  \lim_{\delta\rightarrow0}\limsup_{n\rightarrow\infty}P(v_\tau(\wt{Q}^n,\delta)\ge\epsilon)=0,\qforallq \ep>0.
\end{equation}

To estimate $v_\tau(\wt{Q}^n,\delta)$, note that, due to Proposition \ref{Prop:F} and Lemma \ref{Lem:V}, we can write \eqref{Eq:sF} as follows
\begin{align}
  \label{Eq:sF3}
  \wt{X}^n(\cdot)=&\wt{X}^n(0)-\beta^n \eta-n^{1/4}\int^\cdot_0\wt{Z}^n(s)+\frac{1}{2}\int^\cdot_0\theta^{2}(\wt{w}^n)^2(s)ds\nonumber\\
  &-\int^\cdot_0\theta(\wt{Z}^n_1(s)-\wt{Z}^n_0(s))\wt{w}^n(s)ds+\varepsilon^n(t),
\end{align}
for some $\varepsilon^n \in D$ satisfying $\varepsilon^n = o_P(1)$.
Let
\begin{equation*}
  \xi^n(t):=-\beta^n t+\frac{1}{2}\int^t_0\theta^{2}(\wt{w}^n)^2(s)ds-\int^t_0\theta(\wt{Z}^n_1(s)-\wt{Z}^n_0(s))\wt{w}^n(s)ds, \quad t\ge0.
\end{equation*}
Since $\wt{Z}^n=\wt{X}^n\wedge0$, we have
\begin{align}
  \label{EqXve}
  \wt{X}^n(t)=\xi^n(t)-n^{1/4}\int^t_0\wt{X}^n(s)\wedge0ds+\varepsilon^n(t), \quad t \ge 0.
\end{align}
Fix $0\le s\le t\le \tau$.
Conditional on the event $\mathcal E^n_+ := \{\inf_{u\in[s,t)}\wt{Q}^n(u)>0\}$, we have that $\wt{X}^n(u)=\wt{Q}^n(u)>0$ for all $u\in[s,t)$,
in which case \eqref{EqXve} implies that
\begin{equation*}
  |\wt{Q}^n(t)-\wt{Q}^n(s)|\le |\xi^n(t)-\xi^n(s)|+\|\varepsilon^n\|_\tau.
\end{equation*}
Next consider the event $\mathcal E^n_0 := \{\inf_{u\in[s,t)}\wt{Q}^n(u)=0\}$.
Take
$$s_0 := \inf\{u\in[s,t):\wt{Q}^n(u)=0\} \quad \text{and} \quad t_0 := \sup\{u\in[s,t):\wt{Q}^n(u)=0\},$$
and note that $\wt{Q}^n$ is a pure jump process, so that $s_0 < t_0$ w.p.1.
Then $\wt{Q}^n(s_0)=\wt{Q}^n(t_0-)=0$, and $\wt{X}^n(u)=\wt{Q}^n(u)>0$ for all $u \in[s,s_0)\cup[t_0,t)$.
Thus, on $\mathcal E^n_0$,
\begin{align*}
  |\wt{Q}^n(t)-\wt{Q}^n(s)|\le& |\wt{Q}^n(s_0)-\wt{Q}^n(s)|+|\wt{Q}^n(t)-\wt{Q}^n(t_0-)|\\
  \le&|\xi^n(s_0)-\xi^n(s)|+|\xi^n(t)-\xi^n(t_0)|+2\|\varepsilon^n\|_\tau.
\end{align*}
Overall we see that
\begin{equation*}
  |\wt{Q}^n(t)-\wt{Q}^n(s)|\le 2\sup_{s_1,t_1\in[s,t]}|\xi^n(s_1)-\xi^n(t_1)|+2\|\varepsilon^n\|_\tau,
\end{equation*}
and thus
\begin{equation}
  \label{Eq:xi2}
 v_\tau(\wt{Q}^n,\delta)\le 2v_\tau(\xi^n,\delta)+2\|\varepsilon^n\|_\tau.
\end{equation}

Now,
\begin{equation*}
  |\xi^n(t)-\xi^n(s)|\le (t-s) \big(-\beta^n +\frac{1}{2}\theta^{2}(\|\wt{w}^n\|_\tau)^2+\theta\|\wt{Z}^n_1-\wt{Z}^n_0\|_\tau\|\wt{w}^n\|_\tau\big),
\end{equation*}
and so, Lemma \ref{Lem:SBw} and Lemma \ref{Lem:SBXZ} imply that, for any $\ep>0$, there is an $M := M(\ep) >0$ for which
\begin{equation*}
  \limsup_{n\tinf}P(|\xi^n(t)-\xi^n(s)|\ge M(t-s))\le \ep. 
\end{equation*}
Thus,
\begin{equation}
  \limsup_{n\tinf}P(v_\tau(\xi^n,\delta)\ge M\delta)\le \ep,\mbox{ for all }\delta \in [0,\tau),
\end{equation}
implying that 
\begin{equation*}
  \lim_{\delta\ra0}\limsup_{n\tinf}P(v_\tau(\xi^n,\delta)\ge \ep')=0, \qforallq \ep' > 0.
\end{equation*}
This, together with \eqref{Eq:xi2} and the fact that $\varepsilon^n=o_P(1)$, gives \eqref{EqvprimeQ}, proving the statement of the proposition.
\end{proof}

\begin{proof}[Proof of Proposition \ref{Prop:V}]
We start by proving that
\begin{equation}
  \label{EqQ0w}
  \wt{Q}^n-\wt{Q}^n_0-\wt{w}^n\Ra0\eta\qinq \D.
\end{equation}
To this end, consider the LOF-scaled version of \eqref{Eq:sssc},
\begin{align} \label{Q-Q0-w}
  \wt{Q}^n-\wt{Q}^n_0&=\theta^{-1} n^{-3/4}\lambda^n(1-e^{-\theta n^{-1/4}\wt{w}^n})+\wt{U}_2^n.
\end{align}
and the (continuous) function $f_1$ in \eqref{f}.
It follows from the proof of Lemma \ref{Lem:V} (the arguments below \eqref{Eq:Vt2}) that $f_1(n^{-1/4}\theta\wt{w}^n)\Ra0$ in $D$ as $n\tinf$,
so that
\begin{equation*}
  n^{-3/4}\lambda^n(1-e^{-n^{-1/4}\theta\wt{w}^n})=n^{-1}\lm^n\theta\wt{w}^n\left(1-f_1(n^{-1/4}\theta\wt{w}^n)\right)=n^{-1}\lm^n\theta\wt{w}^n+o_P(1).
\end{equation*}
Using the latter equality, $\lm^n/n \ra 1$, and Proposition \ref{Prop:F}(c) in \eqref{Q-Q0-w}, gives \eqref{EqQ0w}.

We next prove that
\begin{gather}
  \label{Eq:Z1w}
  \int^\cdot_0\left|\wt{Z}^n_1(s)-\wt{Z}^n_0(s)-\theta\wt{w}^n(s)\right|ds\Ra0\eta\mbox{ in }\D,\mbox{ as }n\tinf.
\end{gather}
Consider the LOF-scaled version of \eqref{Eq:ss};
\begin{equation}
  \label{Eq:ssF1}
  \wt{Z}^n_1(t)-\wt{Z}^n_0(t)=n^{-3/4}\int^t_{\wt{T}^n_0\wedge t}1\{n^{-1/2}\theta \wt{w}^n(s-)+s>t\}dD^n(n^{1/4}s), \quad t\ge0.
\end{equation}
Fix a constant $\tau>0$ and let
\begin{equation*}
  \Delta^n:=\sup_{\substack{t\in[0,\tau], s\in[\wt{T}^n_0\wedge t,t],\\t-s\le n^{-1/2}\theta\|\wt{w}^n\|_\tau}}|\wt{w}^n(s-)-\wt{w}^n(t)|
\end{equation*}
Using \eqref{EqQ0w}, the fact that $\wt{Q}^n_0(t)=0$ for all $t\ge \wt{T}^n_0$, and noting that the jumps of $\wt{Q}^n$ are of size $\pm n^{-1/4}$ w.p.1,
so that $\sup_{s\in[0,\tau]}|\wt{Q}^n(s)-\wt{Q}^n(s-)| \ra 0$ as $n\tinf$ w.p.1,
\begin{equation*}
  \Delta^n=\sup_{\substack{t\in[0,\tau], s\in[\wt{T}^n_0\wedge t,t],\\t-s\le n^{-1/2}\theta\|\wt{w}^n\|_\tau}}|\wt{Q}^n(s-)-\wt{Q}^n(t)|+\delta^n
  \le \sup_{\substack{0 \le s < t \le \tau,\\t-s\le n^{-1/2}\theta\|\wt{w}^n\|_\tau}}|\wt{Q}^n(t)-\wt{Q}^n(s)|+\delta^n,
\end{equation*}
where $\delta^n \Ra 0$ in $\RR$ as $n\tinf$.
Thus, Lemma \ref{Lem:SBw} and the $C$-tightness of $\{\wt{Q}^n : n \ge 1\}$ in Proposition \ref{Prop:Tightness} imply that $\Delta ^n\Rightarrow0$ in $\RR$, as $n\tinf$.
Then, for $s\in[\wt{T}^n_0\wedge t,t]$,
\begin{equation*}
  1\{n^{-1/2}\theta(\wt{w}^n(t)-\Delta^n)+s>t\}\le1\{n^{-1/2}\theta\wt{w}^n(s-)+s>t\}\le1\{n^{-1/2}\theta(\wt{w}^n(t)+\Delta^n)+s>t\}.
\end{equation*}
For $\wt T^n_\Delta:=\wt{T}^n_0+n^{-1/2}\theta(\|\wt{w}^n\|_\tau+\Delta^n)$ and $t \in [0,\tau]$, let
$\Upsilon^n_t := \{T^n_\Delta < t\}$, and note that, $T^n_0 \Ra0$ in $\RR$ as $n\tinf$ by Proposition \ref{Prop:F}(a), $w^n = O_P(1)$ by Lemma \ref{Lem:SBw},
and $\Delta^n \Ra0$ as shown above, imply together that, for all $t \in (0,\tau]$,
$$\wt T^n_\Delta \Ra 0 \qinq \RR \qasq n\tinf, \quad \text{so that} \quad P(\Upsilon^n_t) \ra 1 \qasq n\tinf.$$
Now, on the event $\Upsilon_t$,
\begin{equation*}
  t-n^{-1/2}\theta(\wt{w}^n(t)\pm\Delta^n)\ge \wt{T}^n_0=\wt{T}^n_0\wedge t,
\end{equation*}
and it follows from \eqref{Eq:ssF1} and the equality
\begin{equation*}
  \int^a_b1\{s>c\}dF(s)=F(a\vee c)-F(b\vee c),
\end{equation*}
that
\begin{align}
  \label{EqZ1w1}
  \wt{Z}^n_1(t)-\wt{Z}^n_0(t)\ge& n^{-3/4}\int^t_{\wt{T}^n_0\wedge t}1\{s>t-n^{-1/2}\theta(\wt{w}^n(t)-\Delta^n)\}dD^n(n^{1/4}t)\nonumber\\
=&n^{-3/4}D^n(n^{1/4}t)-n^{-3/4}D^n\big(n^{1/4}t-n^{-1/4}\theta(\wt{w}^n(t)-\Delta^n)\big).
\end{align}
Similarly,
\begin{align}
  \label{EqZ1w2}
  \wt{Z}^n_1(t)-\wt{Z}^n_0(t)\le n^{-3/4}D^n(n^{1/4}t)-n^{-3/4}D^n\big(n^{1/4}t-n^{-1/4}\theta(\wt{w}^n(t)+\Delta^n)\big).
\end{align}
For any $0\le s_1\le t_1\le\tau$, \eqref{EqwtMS} gives
\begin{align*}
  &n^{-3/4}|D^n(n^{1/4}t_1)-D^n(n^{1/4}s_1)-n^{5/4}(t_1-s_1)|\\
  \quad &= \Big|\int^{t_1}_{s_1}n^{1/4}\wt{Z}^n_2(s)ds+\wt{M}^n_S(t_1)-\wt{M}^n_S(s_1)\Big|\\
  &\le n^{1/4}\int^{t_1}_{s_1}|\wt{Z}^n_2(s)|ds+2\|\wt{M}^n_S\|_\tau\nonumber\\
  &\le n^{1/4}\int^\tau_0\wt{Z}^n_0(s)ds+n^{1/4}\int^{t_1}_{s_1}|\wt{Z}^n_2(s)+\wt{Z}^n_0(s)|ds+2\|\wt{M}^n_S\|_\tau\nonumber\\
  &\le n^{1/4}\|\wt{Z}^n_2+\wt{Z}^n_0\|_\tau(t_1-s_1)+n^{1/4}\int^\tau_0\wt{Z}^n_0(s)ds+2\|\wt{M}^n_S\|_\tau.
\end{align*}
Plugging $t_1=t$, and the values $t-n^{-1/2}\theta(\wt{w}^n(t)+\Delta^n)$, as well as $t-n^{-1/2}\theta(\wt{w}^n(t)-\Delta^n)$ instead of $s_1$, shows that,
for all $t \in [0, \tau]$,
\begin{align} \label{bdd1}
  &\Big|n^{-3/4}D^n(n^{1/4}t)-n^{-3/4}D^n\big(n^{1/4}t-n^{-1/4}\theta(\wt{w}^n(t)\pm\Delta^n)\big)-\theta \wt{w}^n(t)\Big| \nonumber \\
  \quad &\le \theta\Delta^n+n^{-1/4}\theta\|\wt{Z}^n_2+\wt{Z}^n_0\|_\tau(\|\wt{w}^n\|_\tau+\Delta^n)+n^{1/4}\theta\int^\tau_0\wt{Z}^n_0(s)ds+2\|\wt{M}^n_S\|_\tau \nonumber \\
  & =: \delta^n_\tau.
\end{align}
It follows from Assertions (a) and (b) of Proposition \ref{Prop:F}, Lemma \ref{Lem:SBXZ},
and the fact that $\Delta^n\Ra0$ in $\RR$, that $\delta^n_\tau\Ra0$ in $\RR$. Further, by \eqref{EqZ1w1} and \eqref{EqZ1w2},
\begin{equation*}
|\wt{Z}^n_1(t)-\wt{Z}^n_0(t)-\theta\wt{w}^n(t)|\le \delta^n_\tau \qforallq t \in [0, \tau],
\end{equation*}
so that
\begin{equation}
  \label{Eq:Z1wDelta}
  \int^\tau_{T^n_\Delta\wedge \tau}|\wt{Z}^n_1(s)-\wt{Z}^n_0(s)-\theta\wt{w}^n(s)|ds\Ra0 \qinq \RR.
\end{equation}
Finally notice that
\begin{equation*}
  \int^{T^n_\Delta}_0|\wt{Z}^n_1(s)-\wt{Z}^n_0(s)-\theta\wt{w}^n(s)|ds\le T^n_\Delta (\|\wt{Z}^n_1-\wt{Z}^n_0\|_{T^n_\Delta}+\|\theta\wt{w}^n\|_{T^n_\Delta}) \Ra0 \qinq \RR,
\end{equation*}
where the equality (order of magnitude) follows from
the stochastic boundedness of $\{\wt{Z}^n_1-\wt{Z}^n_0 : n \ge 1\}$ and $\{\wt{w}^n : n \ge 1\}$ in $D$,
established in Lemmas \ref{Lem:SBXZ} and \ref{Lem:SBw}, respectively.
Together with \eqref{Eq:Z1wDelta}, this shows that
\begin{equation*}
  \int^\tau_0|\wt{Z}^n_1(s)-\wt{Z}^n_0(s)-\theta\wt{w}^n(s)|ds\Ra0 \qinq \RR, \qforallq \tau > 0.
\end{equation*}
The uniform convergence over compact intervals in \eqref{Eq:Z1w} follows from to the monotonicity in $\tau$ of the integral; see \cite[Lemma 4.1]{dai1995positive}.

Now,
\begin{gather}
  \label{Eq:Z01wint0}
  \left|\int^t_0\wt{w}^n(s)(\theta\wt{w}^n(s)-\wt{Z}^n_1(s)+\wt{Z}^n_0(s))ds\right|\le\|\wt{w}^n\|_t\int^t_0\left|\theta\wt{w}^n(s)+\wt{Z}^n_0(s)-\wt{Z}^n_1(s)\right|ds,
\end{gather}
for all $t\ge0$.
It follows from \eqref{Eq:Z1w} and the fact that $\wt{w}^n =O_P(1)$,
that the right-hand side of \eqref{Eq:Z01wint0} is stochastically bounded in $\RR$ for each $t \ge 0$, and since it is also non-decreasing in $t$,
\begin{equation*}
  \int^\cdot_0\wt{w}^n(s)(\theta\wt{w}^n(s)-\wt{Z}^n_1(s)+\wt{Z}^n_0(s))ds=o_P(1),
\end{equation*}
so that
\begin{equation}
  \label{Eq:Z10wint}
  \int^\cdot_0 (\wt{Z}^n_1(s)-\wt{Z}^n_0(s))\wt{w}^n(s)ds=\int^\cdot_0\theta(\wt{w}^n(s))^2ds+o_P(1).
\end{equation}
On the other hand, for all $t \ge 0$,
\begin{align}
  \label{Eq:w2int}
  &\left|\int^t_0 \Big( (\wt{w}^n(s))^2-(\wt{Q}^n(s))^2\Big)ds\right|\nonumber\\
  & \quad =\left|\int^t_0 (\wt{w}^n(s)+\wt{Q}^n(s))(\wt{w}^n(s)-\wt{Q}^n(s))ds\right|\nonumber\\
  & \quad \le(\|\wt{w}^n\|_t+\|\wt{Q}^n\|_t)\int^t_0\left(\left|\wt{w}^n(s)+\wt{Q}^n_0(s)-\wt{Q}^n(s)\right|+\wt{Q}^n_0(s)\right)ds.
\end{align}
By Proposition \ref{Prop:F}(a), \eqref{EqQ0w}, and the facts that $\wt{w}^n = O_P(1)$ and $\wt{Q}^n = O_P(1)$,
the right-hand side of \eqref{Eq:w2int} weakly converges to $0$ in $\RR$ as $n\tinf$, for any $t\ge0$.
Notice that the right-hand side of \eqref{Eq:w2int} is non-decreasing in $t$, we obtain
\begin{gather*}
  \int^\cdot_0 \Big((\wt{w}^n(s))^2-(\wt{Q}^n(s))^2\Big)ds \Ra 0 \eta \qinq D \qasq n \tinf,
\end{gather*}
so that
\begin{equation}
  \label{Eq:w2int2}
  \int^\cdot_0(\wt{w}^n(s))^2ds=\int^\cdot_0(\wt{Q}^n(s))^2ds+o_P(1).
\end{equation}
The statement of the proposition follows by employing \eqref{Eq:w2int2} in  \eqref{Eq:Z10wint}, and then in \eqref{Eq:VF}.
\end{proof}

\subsection{Proof of Proposition \ref{Prop:D}.}\label{SubSec:ProofD}
We now prove Proposition \ref{Prop:D}, building on some of the previous arguments.
Of course, condition \eqref{Ass:Ia} is stronger than condition \eqref{Ass:Ib}, and we can therefore use Propositions \ref{Prop:F}--\ref{Prop:V}
in the current proof.

\begin{proof}[Proof of Assertion ($a$)]
The inequalities in \eqref{Eq:Z0} and \eqref{Eq:Q0} give
\begin{gather}
  \label{EqZ0h}
  \int^\infty_0\wh{Z}^n_0(s)ds\le \wh L^n(0)+T^n_0\wh{Q}^n(0) \qandq
  \int^\infty_0\wh{Q}^n_0(s)ds\le T^n_0\wh{Q}^n(0).
\end{gather}
The weak limit $T^n_0\Ra0$ in $\RR$ as $n\tinf$ in Proposition \ref{Prop:F}(a) implies the assertion.
\end{proof}

\begin{proof}[Proof of Assertion ($b$)]
Notice that
\begin{gather*}
  \wh{U}^n_1(t)=n^{1/4}\wt{U}^n_1(n^{-1/4}t),\;\;\wh{U}^n_2(t)=n^{1/4}\wt{U}^n_2(n^{-1/4}t)\mbox{ and }\wh{V}^n(t)=n^{1/4}\wt{V}^n(n^{-1/4}t), \quad t \ge 0.
\end{gather*}
Proposition \ref{Prop:F}(d) implies that $\wh{U}^n_2\Ra0\eta$ in $D$, and thus $\int^\cdot_0\wh{U}^n_2(s)ds\Ra0\eta$ in $\D$, as $n\tinf$.

To prove
\begin{equation}
  \label{EqU1h}
  \int^\cdot_0\wh{U}^n_1(s)ds= n^{1/2}\int^{n^{-1/4}\cdot}_0\wt{U}^n_1(s)ds\Ra0\eta\qinq \D \qasq n\tinf.
\end{equation}
Using similar arguments as in the proof of Proposition \ref{Prop:F}(a), one can check that, under \eqref{Ass:Ia}, $n^{1/4}T^n_0\Ra0$ in $\RR$ as $n\tinf$.
Inspecting the proof of Proposition \ref{Prop:F}(c) (see, in particular, \eqref{Eq:U1int}, \eqref{EqU1First}, and \eqref{EqU1Second}),
it is sufficient to prove that $\|\wt{w}^n\|_{n^{-1/4}\tau}\Ra0$ in $\RR$ for all $\tau\ge0$.
Notice that $\wt{w}^n(\wt{T}^n_0)\le n^{1/4}T^n_0$ and $\wt{Q}^n_0(s)=0$ for $s\ge\wt{T}^n_0$.
Then for $\tau\ge0$,
\begin{align} \label{w bd}
  \|\wt{w}^n\|_{n^{-1/4}\tau} & \le n^{1/4}{T}^n_0+\sup\{\wt{w}^n(s):s\in[\wt{T}^n_0\wedge(n^{-1/4}\tau),n^{-1/4}\tau]\} \nonumber \\
  & \le n^{1/4}T^n_0+\|\wt{Q}^n\|_{n^{-1/4}\tau}+\sup\{|\wt{w}^n(s)-\wt{Q}^n(s)-\wt{Q}^n_0(s)|:s\in[\wt{T}^n_0,n^{-1/4}\tau]\} \nonumber \\
  & \le n^{1/4}{T}^n_0+\|\wt{Q}^n\|_{n^{-1/4}\tau}+\|\wt{w}^n-\wt{Q}^n-\wt{Q}^n_0\|_{n^{-1/4}\tau}.
\end{align}
Now,
  \begin{equation*}
  \bsplit
    \|\wt{Q}^n\|_{n^{-1/4}\tau} & \le \|\wt{Q}^n(t)-\wt{Q}^n(0)\|_{n^{-1/4}\tau} + \|\wt Q^n(0)\|_{n^{-1/4}\tau} \\
    & \le \sup_{s, t\in[0,n^{-1/4}\tau]}|\wt{Q}^n(t)-\wt{Q}^n(s)|  + \|\wt Q^n(0)\|_{n^{-1/4}\tau} \Ra 0 \qinq \RR \qasq n \tinf,
    \end{split}
\end{equation*}
where the convergence follows from Proposition \ref{Prop:Tightness} and \eqref{Ass:Ia}.
Further, $\|\wt{w}^n-\wt{Q}^n-\wt{Q}^n_0\|_{n^{-1/4}\tau}\Ra0$ in $\RR$ as $n\tinf$ by \eqref{EqQ0w}.
Since $n^{1/4}T^n_0\Ra0$ in $\RR$, as was mentioned above, 
$\|\wt{w}^n\|_{n^{1/4}\tau}\Ra0$ in $\RR$ as $n\tinf$, for $\tau>0$ by \eqref{w bd}.

The proof that $\wh{V}^n\Ra 0\eta$ in $D$ builds on arguments in the proof of Lemma \ref{Lem:V}, by replacing $t$ in the proof of that lemma with $n^{-1/4}t$.
Since $n^{1/2}\wt{T}^n_0=n^{1/4}T^n_0\Ra0$ and $\|\wt{w}^n\|_{n^{-1/4}\tau}\Ra 0$ in $\RR$ for all $\tau\ge0$,
the left-hand side of \eqref{Eq:Vt1}, \eqref{Eq:Vt3}, and \eqref{Eq:Vt2}, regarded as processes of $t$, are all $o_P(n^{-1/4})$.
Using this in \eqref{Eq:Vt}, gives that
\begin{equation*}
  \wt{V}^n(n^{-1/4}\cdot)-\frac{1}{2}\int^{n^{-1/4}\cdot}_0\theta^{2}(\wt{w}^n)^2(s)ds+\int^{n^{-1/4}\cdot}_0\theta(\wt{Z}^n_1(s)-\wt{Z}^n_0(s))\wt{w}^n(s)ds=o_P(n^{-1/4}).
\end{equation*}
The stochastic boundedness of $\{\wt{Z}^n_1-\wt{Z}^n_0 : n \ge 1\}$ (Lemma \ref{Lem:SBXZ}), and the fact that $\|\wt{w}\|_{n^{-1/4}\tau}\Ra0$ in $\RR$ as $n\tinf$,
imply that
\begin{equation*}
  \wh{V}^n=n^{1/4}\wt{V}^n(n^{-1/4}\cdot)\Ra0\eta,\qinq D\qasq n\tinf. \qedhere
\end{equation*}
\end{proof}

\begin{proof}[Proof of Assertion ($c$)]
By the Poisson FCLT (e.g., Theorem 4.2 in \cite{pang2007martingale}),
\begin{equation}
  \label{Eq:PFCLT}
  (\frac{A(nt)-nt}{\sqrt{n}}, \frac{S(nt)-nt}{\sqrt{n}},  \frac{S(nt)-nt}{\sqrt{n}})\Ra (B_1,B_2, 0\eta), \qinq D^3 \qasq n\tinf,
\end{equation}
for two independent standard Brownian motions $(B_1,B_2)$.
Notice that $\wh{M}^n_A$, $\wh{M}^n_S$, and $\wh{M}^n_S$ are the compositions of the scaled compensated Poisson processes in \eqref{Eq:PFCLT} with the time changes
\begin{equation*}
  \Phi^n_A: t\mapsto n^{-1}\lambda^nt,\quad \Phi^n_S:t\mapsto n^{-1}\int^t_0Z_2^n(s)ds, \qandq \Phi^n_R:t\mapsto n^{-1}\int^t_0Q^n(s)ds,
\end{equation*}
respectively.
By \eqref{beta^n}, \eqref{EqZ0h}, and the stochastic boundedness of $\{\wt{Q}^n : n \ge 1\}$ and $\{\wt{Z}^n_2+\wt{Z}^n_0 : n \ge 1\}$ in $\D$,
established in Lemmas \ref{Lem:SBQ} and \ref{Lem:SBXZ}, respectively,
\begin{align*}
  &n^{-1}\lambda^nt=t+o(1),\quad n^{-1}\int^\cdot_0Q^n(s)ds=\int^{n^{-1/4}\cdot}_0\wt{Q}(s)ds=o_P(1),\\
  &n^{-1}\int^\cdot_0Z^n_2(s)ds=\eta+\int^{n^{-1/4}\cdot}_0(\wt{Z}^n_2(s)+\wt{Z}^n_0(s))ds-n^{-1/2}\int^\cdot_0\wh{Z}^n_0(s)ds=\eta+o_P(1),
\end{align*}
implying that
\begin{equation*}
  (\Phi^n_A,\Phi^n_S,\Phi^n_R)\Ra(\eta,\eta,0\eta), \qinq D^3 \qasq n\tinf,
\end{equation*}
and the joint convergence in Assertion (c) follows from the continuity of the composition map, e.g., Theorem 13.2.1 in \cite{whitt2002stochastic}.
\end{proof}

\section{Remaining Proofs Regarding the Stationary Limits}
\label{secStationary}
In this section we prove Propositions \ref{Prop:DiffusionTightness}, \ref{Prop:FluidTightness},
and Proposition \ref{PropL}.


\subsection{Proof of Proposition \ref{Prop:DiffusionTightness}}  \label{Sec:Coupling}
An essential step in the proofs of Propositions \ref{Prop:DiffusionTightness} and \ref{Prop:FluidTightness} is the following stochastic-order lower bound for $X^n$.
For $n\ge 1$, consider an $M/M/n+M$ (Erlang-A) system, having independent service and patience times,
with arrival rate $\lambda^n$, service rate $1$, and patience rate $\theta$.
Let $X^n_A,Q^n_A,Z^n_A$ denote the queueing processes in this Erlang-A system, analogously to the corresponding processes $X_n$, $Q^n$, $L^n$ and $Z^n$
in the $M/M_{pc}/n+M_{pc}$.
\begin{lemma}
  \label{LemCouplingL}
$X^n_A(\infty)\le_{s.t.}X^n(\infty)$. 
\end{lemma}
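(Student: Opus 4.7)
My plan is to prove $X^n_A(\infty)\le_{s.t.}X^n(\infty)$ by constructing a pathwise coupling on a common probability space under which $X^n_A(t)\le X^n(t)$ for all $t\ge 0$, and then passing to the stationary limit as $t\tinf$. The strategy is inductive and parallels the case-by-case coupling argument used in the proof of Lemma \ref{LemCompTrick}.

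Specifically, I would drive both systems with the same Poisson arrival process of rate $\lambda^n$ and start them from the same initial state, taken to be empty. For each customer $i$ I would introduce a common exponentially distributed residual $S^b_i$ with mean $1$ to couple the service times: in the $M/M_{pc}$ system, $S^b_i$ plays the role of the fresh exponential component in the memoryless decomposition \eqref{servTime} of the service time of a served customer with wait $w_i$, so that $S_i=\theta w_i+S^b_i$ and $T_i=S_i/\theta$; in the Erlang-A system, $S^A_i:=S^b_i$ is taken as the service time, while the patience $T^A_i$, exponentially distributed with rate $\theta$, is drawn independently.

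I would then prove by induction on the customer index $i$ that $D_i\ge D^A_i$, where $D_i$ and $D^A_i$ denote customer $i$'s departure time in the $M/M_{pc}$ and Erlang-A systems, respectively. The base case is immediate because both systems are initially empty, so customer $1$ enters service upon arrival in both and has sojourn $S^b_1$ in either. In the inductive step, the hypothesis $D_k\ge D^A_k$ for $k<i$ propagates the pathwise queue-length dominance $X^n(t)\ge X^n_A(t)$ up to the arrival of customer $i$, and in particular $w_i\ge w^A_i$. One then considers three cases, analogous to those in the proof of Lemma \ref{LemCompTrick}: both systems serve customer $i$; the $M/M_{pc}$ serves but Erlang-A abandons; or the $M/M_{pc}$ abandons but Erlang-A serves. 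The first case follows from $w_i\ge w^A_i$ together with the identity $S_i=\theta w_i+S^b_i\ge S^b_i=S^A_i$, so that the $M/M_{pc}$ sojourn $w_i+S_i$ dominates the Erlang-A sojourn $w^A_i+S^A_i$; the second follows because the Erlang-A customer's sojourn is at most $w^A_i\le w_i$, itself dominated by the $M/M_{pc}$ served sojourn; the third is handled using the $n$-server capacity bound, as in Case 2 of the proof of Lemma \ref{LemCompTrick}.

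Pathwise dominance $X^n(t)\ge X^n_A(t)$ implies $X^n_A(t)\le_{s.t.}X^n(t)$ for every $t\ge 0$. Since $X^n(t)\Ra X^n(\infty)$ by Theorem \ref{ThErgodicity} and $X^n_A(t)\Ra X^n_A(\infty)$ by standard Erlang-A theory as $t\tinf$ regardless of initialization, and since stochastic order is preserved under weak convergence by \cite[Proposition 3]{kamae1977stochastic}, we obtain $X^n_A(\infty)\le_{s.t.}X^n(\infty)$. The main obstacle will be the third case of the induction: by the perfect correlation, a short $M/M_{pc}$ patience corresponds to a short service time, so the $M/M_{pc}$ abandonment time could be small in absolute terms, threatening the departure-time dominance. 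Handling this case cleanly requires invoking the inductive queue-length dominance to force the corresponding Erlang-A customer to have departed before the $M/M_{pc}$ abandonment, via a counting argument based on the $n$-server capacity.
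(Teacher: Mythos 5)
Your approach differs from the paper's and has a gap that cannot be closed. You attempt a per-customer induction on departure times $D_i\ge D^A_i$, modeled on the proof of Lemma \ref{LemCompTrick}, but this per-customer ordering is actually false. The trouble is exactly your Case 3: customer $i$ abandons the $M/M_{pc}$ system but is served in the Erlang-A system. On abandonment the $M/M_{pc}$ departure time is $E^n_i+T_i$ with $T_i=S_i/\theta$, which can be arbitrarily small; the Erlang-A departure time is $E^n_i+w^A_i+S^A_i$, where $S^A_i$ is a full $\mathrm{Exp}(1)$ service time that runs down after entry to service. The capacity argument you borrow from Case 2 of Lemma \ref{LemCompTrick} can only show the waiting portion $w^A_i$ is controlled; it cannot touch $S^A_i$, and nothing in either state process constrains it. Moreover no coupling respecting the marginals can make $T_i\ge S^A_i$ almost surely: $T_i$ is $\mathrm{Exp}(\theta)$ while $S^A_i$ is $\mathrm{Exp}(1)$, so for $\theta>1$ the former is stochastically strictly smaller. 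Hence $D_i<D^A_i$ occurs with positive probability and the induction does not close. There is also a well-posedness issue in your construction: the common residual $S^b_i$ in the decomposition $S_i=\theta w_i+S^b_i$ is defined only conditionally on customer $i$ being served in the $M/M_{pc}$ system (i.e., $S_i>\theta w_i$); for a customer who abandons, $S^b_i$ is undefined, so the coupling $S^A_i:=S^b_i$ is unavailable precisely in the problematic case.

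The paper's proof avoids per-customer bookkeeping entirely. It performs the induction on arrival epochs, claiming only the weaker process-level inequality $X^n(E^n_k)\ge X^n_A(E^n_k)$, and proves it propagates between arrivals by a rate comparison of pure-death processes. The key structural fact is that the departure intensity of the $M/M_{pc}$ system at state $m$ is at most the Erlang-A death rate $d_A(m)=\theta(m-n)\vee0+m\wedge n$, because servers occupied by phase-$1$ customers contribute nothing to the departure intensity. This intensity-level coupling yields $X^n(t)\ge X^n_A(t)$ pathwise between arrival epochs, arrivals increment both processes by one, and passing to the time limit with \cite[Proposition 3]{kamae1977stochastic} gives the stated stochastic ordering of the stationary distributions, exactly as you propose in your final step. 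The piece your plan is missing is the replacement of the (false) departure-time comparison with this (true and strictly weaker) departure-rate comparison.
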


\begin{proof} 
We prove the lemma by coupling $M/M_{pc}/n+M_{pc}$ and the above Erlang-A system, and showing that the inequality in the statement holds w.p.1 for the coupled systems.
In particular, we give the $M/M_{pc}/n+M_{pc}$ system and the $M/M/n+M$ system the same arrival stream
and initial condition. Let $E^n_k$ denote by the arrival epoch of the $k$th customer to the $n$th systems.
Exploiting the PASTA (Poisson arrivals See Time Averages) property, and using induction,
it is sufficient to prove that, if $X^n(E^n_k)\ge X^n_A(E^n_k)$, then $X^n(E^n_{k+1})\ge X^n_A(E^n_{k+1})$, for all $k \ge 1$, where the inequalities
hold w.p.1 for the coupled systems, from which the stochastic ordering in the statement follows.

Hence, we initialize both systems with with the same number of customers, so that $X^n(0) = X^n_A(0)$, and take the induction hypothesis that
$X^n(E^n_k) \ge X^n_A(E^n_k)$.
Consider the dynamics of the $M/M_{pc}/n+M_{pc}$ when all arrivals are ``turned off'' after the $k$th arrival, and let $(X',Q',Z'_1,\{\ell'\}, \{r'\})$ denote
the corresponding Markov process.
Let $X'_A$ be the corresponding pure-death process for the Erlang-A system with arrivals turned off after the $k$th arrival. Note that the death rate of this process
at state $m \ge 1$ is
\begin{equation*}
  d_A(m):=\theta(m-n)\vee 0 + m\wedge n,
\end{equation*}
and that $X'(E^n_k+\cdot)$ is a pure jump process with $X'(E^n_k)$ jumps until it reaches state $0$.
For $j = 1,\dots, X'(E^n_k)$, let $N_j$ denote the $j$th jump time of $X'(E^n_k + \cdot)$,
so that $N_j$ is the $j$th customer that leaves the system after $E_k$.
Due to the memoryless property of the exponential distribution, at $t\ge E^n_k$,
\begin{itemize}
  \item [(i)] The number of customers in queue is $(X'(t)-n)\vee0$, each having a remaining patience time that is exponentially distributed with rate $\theta$,
  and is independent of everything else.
  \item [(ii)] The number of customers in phase-$2$ service is $X'(t)\wedge n- \sum^{Z'_1(t)}_{i=1}1\{t\le E^n_k+r'_i(E^n_k)\}$, with each
   of those customers having a remaining service time that is exponentially distributed with rate $1$, independently of everything else.
\end{itemize}
Then $N_{j+1}-N_j$ is, conditional on $(X'(N_j), Z'_1(N_j))$, distributed as the interarrival time in a non-homogeneous Poisson process with intensity function
\begin{equation*}
d_j(t):=\theta (X'(N_j)-n)\vee0+ X'(N_j)\wedge n- \sum^{Z'_1(N_j)}_{i=1}1\{t\le N_j+r'_i(N_j)\}.
\end{equation*}
Clearly, $d_j(t)\le d_A(X'(N_j))$ for all $t\ge0$ and $j=1,2,\cdots X'(E^n_k)$, implying that,
for $j\le X'_A(E^n_k)$, the sojourn time of the process $X'_A(E^n_k+\cdot)$ in state $j$ is dominated by the corresponding sojourn time of $X'(E^n_k+\cdot)$.
Using the induction hypothesis $X'_A(E^n_k) \le X'(E^n_k)$, we conclude that $X'_A(E^n_k+t)\le X'(E^n_k+t)$ for all $t\ge0$.
Finally, since $E^n_{k+1}-E^n_k$ is independent of $(X'_A(E^n_k+\cdot), X'(E^n_k+\cdot))$,
we have that $X'_A(E^n_{k+1}) \le X'(E^n_{k+1})$, implying that $X_A(E^n_{k+1})\le X(E^n_{k+1})$ for the two coupled systems.
\end{proof}

\begin{proof}[Proof of Proposition \ref{Prop:DiffusionTightness}]
Consider a sequence of $M/M/n$ (Erlang-C) systems, each with service rate $1$, and with arrival rate $\lm^n$ to the $n$th system.
Denote by $(X^n_U,Q^n_U,Z^n_U)$ the number-in-system process, the queue-length process, and the number-in-service process in the $n$th Erlang-C system.
Notice that the $M/M/n$ system can be regarded as an $M/M_{pc}/n+M_{pc}$ system with no abandonment,
so that we can apply the coupling in Lemma \ref{LemCompTrick} between the two $M/M_{pc}/n+M_{pc}$ systems (one with abandonment rate that is equal to $0$,
and the other with rate $\theta$).
Then $\beta^n\ra\beta>0$ as $n\tinf$ implies that there exists $N$, such that $\beta^n>0$ for $n\ge N$,
so that $X^n_U(\infty)$ exists and Lemma \ref{LemCompTrick} implies that $X^n_U(\infty)\ge_{s.t.} X^n(\infty)$ for all $n\ge N$.
In particular,
\begin{equation*}
  E[\wh{X}^n(\infty)]\le  E[\wh{X}^n_U(\infty)]\ra E[\wh X_C(\infty)]<\infty \qasq n\tinf,
\end{equation*}
where the convergence follows from \cite[Theorem 1]{halfin1981heavy} and the last inequality follows from Corollary 1 in this reference.

On the other hand, Lemma \ref{LemCouplingL} gives
\begin{equation*}
  E[\wh{X}^n(\infty)]\ge n^{-1/2}E[{X}^n_A(\infty)-n]\ge n^{-1/2}E[{Z}^n_A(\infty)-n].
\end{equation*}
To estimate $E[Z^n_A(\infty)]$, let $P(Ab^n_A)$ denote the long-run fraction of customer abandonment, so that $E[Z^n_A(\infty)]=\lambda^n(1-P(Ab^n_A))$.
By
\cite[Theorem 4]{garnett2002designing}, $P(Ab^n_A)=O(n^{-1/2})$.
Therefore,
\begin{equation}
  \label{Eq:EZnI}
  1-(\lambda^n)^{-1}E[Z^n_A(\infty)]=O(n^{-1/2}),
\end{equation}
so that
\begin{equation*}
  \limsup_{n\tinf}E[|\wh{X}^n(\infty)|]<\infty,
\end{equation*}
implying the statement of the proposition.
\end{proof}

\subsection{Proof of Proposition \ref{Prop:FluidTightness}} \label{secOffered}

We start by proving that $\wt{X}^n(\infty)\wedge0\Ra 0$ in $\RR$, as $n\tinf$.
Notice that
\begin{equation*}
  \wt{X}^n(\infty)\wedge0=\wt{Z}^n(\infty)=n^{-3/4}(Z^n(\infty)-n).
\end{equation*}
Using $n\ge E[Z^n(\infty)]\ge E[Z^n_A(\infty)]$ and \eqref{Eq:EZnI}, we obtain that $E[\wt{Z}^n(\infty)]\ra0$ as $n\tinf$.
By Markov's inequality,
\begin{equation*}
  P(\wt{Z}^n(\infty)>\ep)\le \ep^{-1}E[\wt{Z}^n(\infty)],\qforallq \ep>0,
\end{equation*}
implying that $\wt{X}^n(\infty)\wedge0=\wt{Z}^n(\infty)\Ra0$ in $\RR$.

Let $w^n_v$ be the {\em offered} waiting-time process in the $n$th system, namely, $w^n_v(t)$ is the time that an infinite-patient
customer (that does not abandon) would have to wait if he arrives at time $t$.
Similar to the proof of Theorem \ref{ThErgodicity} and Proposition \ref{PropSL}, $
(X^n,Q^n,Z^n,L^n,w^n_v)$ has a unique joint stationary distribution, and we let $w^n_v(\infty)$ follow the marginal stationary distribution of $w^n_v$.

Consider a generic customer with service requirement $S$, arriving to the system in steady state.
Then the offered waiting time of such a customer is independent of $S$, and is distributed like $w^n_v(\infty)$ due to PASTA.
Therefore, a generic customer in steady state enters service if and only if $S\ge \theta w^n_v(\infty)$,
and the contribution to the workload of such a customer is $S\:1\{S\ge \theta w^n_v(\infty)\}$.
In particular, the Poisson arrivals contribute to the workload of the system by $\lambda^nS\:1\{S\ge \theta w^n_v(\infty)\}$.
On the other hand, each working server reduces the workload at a constant rate 1,
so that the pool of servers reduces the workload by $Z^n(\infty)$ per unit time in steady state.
Since the mean workload is constant in steady state, we have
\begin{equation*}
E[ \lambda^nS\:1\{S \ge \theta w^n_v(\infty)\}]=E [Z^n(\infty)]\ge E [Z^n_A(\infty)],
\end{equation*}
where the inequality follows Lemma \ref{LemCouplingL} and the fact that $Z^n(\infty) = X^n(\infty) \wedge n$ and $Z^n_A(\infty) = X^n_A(\infty) \wedge n$.

Since $S$ is exponentially distributed, and is independent of $w^n_v(\infty)$,
\begin{equation*}
  E [\lambda^nS\:1\{S\ge \theta w^n_v(\infty)\}]=\lambda^nE[\int^{+\infty}_{\theta w^n_v(\infty)}se^{-s}ds]=E [e^{-\theta w^n_v(\infty)}(1+\theta w^n_v(\infty))].
\end{equation*}
Hence, for
\bequ \label{f2}
f(x) := e^{-x}(1+x), \quad x\in\RR_+,
\eeq
it holds that
\begin{equation*} 
E[ f(\theta w^n_v(\infty))]=E [S\:1\{S\ge \theta w^n_v(\infty)\}] \ge (\lambda^n)^{-1}E [Z^n_A(\infty)].
\end{equation*}
Using \eqref{Eq:EZnI}, for $f$ in \eqref{f2}.
\begin{equation}
  \label{Eq:Balance}
  1-E[ f(\theta w^n_v(\infty))]=O(n^{-1/2}).
\end{equation}


To prove the tightness of $\wt{X}^n(\infty)$, consider the steady-state probability of abandonment.
On the one hand, a generic customer abandons the system if $S\le w^n_v(\infty)$, where $S$ again stands for her service time;
On the other hand, the exponential distribution of patience time implies that the steady-state abandonment rate is $\theta E[Q^n(\infty)]$.
Therefore we have
\begin{equation*}
  \theta E[Q^n(\infty)]=\lambda^n P(S\le w^n_v(\infty))=\lambda^n(1-E[e^{-\theta w^n_v(\infty)}]).
\end{equation*}
Notice that $e^{-x}\ge (1-x)^+$ holds for any $x\in\RR_+$.
Taking $x=\theta w^n_v(\infty)$ we obtain
\begin{align*}
   E[e^{-\theta w^n_v(\infty)}]
  \ge&E[(1-\theta w^n_v(\infty))^+],
\end{align*}
so that, using $(1-x)^+=1-(x\wedge 1)$, for all $x\in\RR$,
\begin{equation}
  \label{Eq:EQUB}
  E[Q^n(\infty)]\le \theta^{-1}\lambda^nE[(\theta w^n_v(\infty)\wedge1)].
\end{equation}

To bound the right-hand side of \eqref{Eq:EQUB} from above, we elaborate on \eqref{Eq:Balance}:
Since $f'(x)=-xe^{-x}$ and $f''(x)=(x-1)e^{- x}$, for $f$ in \eqref{f2}, $f$ is strictly decreasing and concave on $[0,1]$.
Therefore
\begin{align} \label{Jensen}
 E\big[f(\theta w^n_v(\infty))\big]\le E[f\big( (\theta w^n_v(\infty))\wedge1\big)]\le f(E[(\theta w^n_v(\infty))\wedge1]).
\end{align}
where the first inequality follows the monotonicity of $f$, and the second inequality follows from Jensen's inequality.
Finally, $f(0)=1$ and $f'(x)\le -e^{-1}x$ for $x\le1$, implying that
$$f(x) = f(0) + \int_0^x f'(u)du \le 1-x^2/(2e) \qforq x\in[0,1].$$
Using the latter inequality in \eqref{Jensen} with $x=(\theta w^n_v(\infty))\wedge 1$, gives
\begin{equation*}
  E\big[f(\theta w^n_v(\infty))\big]\le f(E[(\theta w^n_v(\infty))\wedge1])\le 1-(2e)^{-1}(E[(\theta w^n_v(\infty))\wedge1])^2,
\end{equation*}
so that
\begin{equation*}
  E [(\theta w^n_v(\infty))\wedge1] \le \sqrt{\left(2e-2eE\big[f(\theta w^n_v(\infty))\big]\right)}.
\end{equation*}
It follows from \eqref{Eq:EQUB} and \eqref{Eq:Balance}, that
\begin{equation*}
  E[Q^n(\infty)]\le \sqrt{2e}\theta^{-1}\lambda^n\left(1-E\big[f(\theta w^n_v(\infty))\big]\right)^{1/2}=O(n^{3/4}).
\end{equation*}
Finally, by Markov's inequality, we have that, for any $M > 0$,
\begin{equation*}
  \limsup_{n\tinf}P(n^{-3/4}Q^n(\infty)\ge M)\le M^{-1}\limsup_{n\tinf}n^{-3/4}E[Q^n(\infty)]<\infty,
\end{equation*}
implying that $\{\wt{Q}^n(\infty) : n \ge 1\}$ is tight in $\RR$, from which the tightness of $\{\wt{X}^n(\infty) : n \ge 1\}$ follows.
\hfill\qed

\subsection{Proof of Proposition \ref{PropL}} \label{secProofPropL}

\begin{proof}[Proof of Assertion (a)]
We will show that
\begin{equation}
  \label{Eq:Linfty}
  E[L^n(\infty)]\le \lambda^n(1+\theta^{-2})(1-E[f(w^n_v(\infty))]).
\end{equation}
Together with \eqref{Eq:Balance}, this implies that $E[L^n(\infty)]=O(n^{1/2})$, which is equivalent to the statement of the assertion.

To prove \eqref{Eq:Linfty}, we consider the generalization of Little's law, known as ``$H=\lambda G$''; e.g., see \cite[Chapter 5]{wolff1989stochastic}.
Assume that the system is initialized in steady state, and let
$E^n_j$, $v^n_j$, and $T^n_j$ be, respectively, the arrival time, offered wait, and the patience of the $j$th arrival.
Also let
\begin{align*}
  g^n_j(t) &:= (t-E^n_j)1\{t\in[E^n_j,E^n_j+(v^n_j\wedge T^n_j)]\}\nonumber\\
  & \quad +(\theta v^n_j-(t-E^n_j-v^n_j))1\{t\in[E^n_j+v^n_j,E^n_j+(1+\theta)v^n_j],v^n_j\le T^n_j\},
\end{align*}
We claim that
\begin{equation}
  \label{Eq:Lt}
  L^n(t)=\sum^\infty_{j=0}g^n_j(t),\mbox{ for all }t\ge0.
\end{equation}
To see this, recall that $L^n(t)$ is the sum of the elapsed waiting time for all customers that are in the queue,
plus the remaining phase-$1$ service time for all customers in service.
Now, customer $j$ is in the queue at time $t$ if $j$ is an element of the set
$\{j:E^n_j\le t\le E^n_j+(v^n_j\wedge T^n_j)\}$, and the elapsed waiting time of that customer is $t-E^n_j$.
On the other hand, customer $j$ is in phase-$1$ of service if $j$ is an element of the set
$\{j:T^n_j\ge v^n_j,E^n_j+v^n_j\le t\le E^n_j+(1+\theta)v^n_j\}$, and the remaining phase-$1$ service time for that customer is $\theta v^n_j-(t-E^n_j-v^n_j)$.
Hence, we obtain \eqref{Eq:Lt}.

Let
\begin{equation*}
  G^n_j:=\int^\infty_0g^n_j(t)dt=(v^n_j\wedge T^n_j)^2/2+1\{T^n_j\ge v^n_j\}(\theta v^n_j)^2/2.
\end{equation*}
Since the system is considered to be in steady state, $G^n_j$ is, for each $j \ge 1$, distributed like
\begin{equation*}
G^n := (w^n_v(\infty)\wedge T)^2/2+1\{w^n_v(\infty)\le T\}(\theta w^n_v(\infty))^2/2,
\end{equation*}
where $w^n_v(\infty)$ is the stationary offered wait defined in Section \ref{secOffered},
and $T$ is an exponentially distributed random variable with rate $\theta$ that is independent of $w^n_v(\infty)$.

It follows from the following inequality
\begin{equation} \label{Gineq}
 G^n = (w^n_v(\infty)\wedge T)^2/2+1\{w^n_v(\infty)\le T\}(\theta w^n_v(\infty))^2/2\le \frac{1+\theta^2}{2}(w^n_v(\infty)\wedge T)^2,
\end{equation}
and the trivial inequality $w^n_v(\infty)\wedge T\le T$, that $E[G^n] < \infty$.
It is also easy to check that (198) in Chapter 5 of \cite{wolff1989stochastic} holds, so that, by Theorem 5 in this reference,
\begin{equation*}
E[L^n(\infty)]=\lambda^n E[G^n],
\end{equation*}
which together with the inequality in \eqref{Gineq}, gives that
\begin{equation}
  \label{Eq:LUB}
  E[L^n(\infty)]\le\lambda^n (1+\theta^2)/2E\left[(w^n_v(\infty)\wedge T)^2]\le\lambda^n(1+\theta^2)/2 E[(w^n_v(\infty)\wedge T)^2 \right].
\end{equation}
Finally, 
\begin{align*}
  E[(w^n_v(\infty)\wedge T)^2]=&E\left[\int^\infty_{w^n_v(\infty)}(w^n_v(\infty))^2\theta e^{-\theta t}dt+\int^{w^n_v(\infty)}_0t^2\theta e^{-\theta t}dt\right]\\
  =&2\theta^{-2}(1-E[(1+\theta w^n_v(\infty))e^{-\theta w^n_v(\infty)}])\\
  =&2\theta^{-2}(1-E[f(w^n_v(\infty))]).
\end{align*}
Plugging the latter equality in \eqref{Eq:LUB} gives \eqref{Eq:Linfty}, and so Assertion (a) follows from \eqref{Eq:Balance}, as was mentioned above.
\end{proof}

\begin{proof}[Proof of Assertion (b)]
By \eqref{Eq:LUB}, it is sufficient to prove that, if $\beta > 0$, then
\begin{equation}
  \label{Eq:wvUB}
  E[ (w^n_v(\infty)\wedge T)^2]=O(n^{-1}).
\end{equation}
As in the proof of Proposition \ref{Prop:DiffusionTightness}, we consider a coupling of the $M/M_{pc}/n + M_{pc}$ system with an Erlang-C system
having the same arrival process and service rate $1$. In turn, the Erlang-C system can be considered to be an $M/M_{pc}/n + M_{pc}$
system with patience that is exponentially distributed with rate $0$, so that the coupling in Lemma \ref{LemCompTrick} can be applied.
Let the two coupled systems be initially empty.

Consider a customer that arrives at both systems.
Inspecting the three cases in the proof of Lemma \ref{LemCompTrick}, we immediately see that
Case 1 irrelevant because there is no abandonment in the Erlang-C system.
The proof of Case 2 in Lemma \ref{LemCompTrick} shows that the patience of the customer in the $M/M_{pc}/n+M_{pc}$
is shorter than the waiting time of that customer in the Erlang-C system.
In particular, the delay in queue of the customer is shorter in the $M/M_{pc}/n+M_{pc}$ system than in the Erlang-C system.
Finally, the proof of Case 3 in the proof of Lemma \ref{LemCompTrick} shows again that the waiting time of the customer in
$M/M_{pc}+n/M_{pc}$ system is shorter than in the Erlang-C system.
Therefore, the waiting time of any customer is smaller in the $M/M_{pc}/n+M_{pc}$ system than in the coupled Erlang-C system.
As $\beta>0$ implies that $\beta^n>0$ for sufficiently large $n$, there exists $N_0\in \ZZ_+$ such that the Erlang-C system is stable for all $n>N_0$.
In particular, for $n\ge N_0$, the stationary waiting time of the $M/M_{pc}/n+M_{pc}$ system is stochastically dominated from above
by the stationary waiting time of the Erlang-C system.

Let $w^n_U(\infty)$ denote the stationary waiting time in the Erlang-C system, and note that
the stationary waiting time of a generic customer in the $M/M_{pc}/n+M_{pc}$ is distributed like $w^n_v(\infty)\wedge T$.
Then the stochastic ordering $w^n_v(\infty)\wedge T \le_{s.t.} w^n_U(\infty)$ just argued implies that
\begin{equation} \label{wMeanOrder}
  E[ (w^n_v(\infty)\wedge T)^2]\le E[ (w^n_U(\infty))^2].
\end{equation}
Now, the sojourn time of an arriving customer to the Erlang-C system that finds $q-1$ customers in queue, $q \ge 1$,
is distributed like the sum of $q$ independent exponential variables with mean $n^{-1}$.
Letting $\{\gamma^n_i\}$ be a sequence of i.i.d.\ exponential random variables with mean $n^{-1}$, it holds that
$w^n_U(\infty) \deq \sum^{Q^n_U(\infty)}_{i=1}\gamma^n_i$, due to PASTA, so that
\begin{equation*} 
  E[ (w^n_U(\infty))^2]=E[(\sum^{Q^n_U(\infty)}_{i=1}\gamma^n_i)^2]=n^{-2}E[(Q^n_U(\infty))^2]+n^{-2}E[Q^n_U(\infty)].
\end{equation*}
Since $\{Q^n_U(\infty) : n \ge 1\}$ is a sequence of stationary queues of $M/M/n$ systems staffed according to \eqref{Square root},
we can apply the (explicit) limits for the first and second moments of the diffusion-scaled process in
\cite[Corollary 1]{halfin1981heavy}, to conclude that $E[ (w^n_U(\infty))^2] = O(n^{-1})$. Hence, \eqref{Eq:wvUB} follows from \eqref{wMeanOrder}.
\end{proof}

\section{Remaining Proofs of Lemmas in Section \ref{secAuxProof1}}
\label{SecProofsLemmas}
In this section we prove Lemmas \ref{Lem:SBQ}, \ref{Lem:SBw}, \ref{Lem:FMartingales}, and \ref{Lem:SBXZ}.

\begin{proof}[Proof of Lemma \ref{Lem:SBQ}]
We again use a coupling of the $M/M_{pc}/n+M_{pc}$ system with another queueing system, which we denote by $\UU^n$, using
the same notation as in the proof of Proposition \ref{Prop:DiffusionTightness} for the corresponding process $(X^n_U, Q^n_U, Z^n_U)$.
We take system $\UU^n$ is a degenerated $M/M_{pc}/n+M_{pc}$ system with arrival rate $\lm^n$ and service rate $1$, in which customers have infinite patience.
  For the coupling, we initialize system $\UU^n$ and the $M/M_{pc}/n+M_{pc}$ system as follows: first, we take $X^n_U(0)=X^n(0)$;
second, any initial customer in queue has the same service time in both systems;
third, any initial customer in service system $\mathcal U$ has the same remaining service time in the $M/M_{pc}/n+M_{pc}$ system.
Note that system $\UU^n$ is not an Erlang-C system, because some of the initial customers in service may be in their phase $1$.
(There is no phase-$1$ service for any of the customers that arrive after time $0$ in this system.

Using the same arguments as in the proof of Lemma \ref{LemCompTrick}, we can construct a coupling between $X^n$ and $X^n_U$
such that $X^n(t)\le X^n_U(t)$, and thus $Q^n(t)\le Q^n_U(t)$,  w.p.1 for all $t\ge0$.
Let $\wt{Q}^n_U(t):=n^{-3/4}Q^n(n^{1/4}t)$, it is sufficient to prove that $\{\wt{Q}^n_U : n \ge 1\}$ is stochastically bounded in $D$.

Let $A$ and $S$ be two unit-rate Poisson processes.
Let $Z^n_{U0}$ and $Z^n_U$ be the processes that characterize the number of customers in
phase-$1$ service and phase-$2$ service, respectively. (Recall that arrivals have only phase-$2$ service, but initial customers may have phase-$1$ service).
Let
\begin{align*}
  \wt{X}^n_U(t) &:= n^{-3/4}(X^n_U(n^{1/4}t)-n), \quad \wt{Z}^n_U(t) := n^{-3/4}(Z^n_U(n^{1/4}t)-n),\\
  \wt{Z}^n_{U0}(t) &:= n^{-3/4}\wt{Z}^n_{U0}(n^{1/4}t).
\end{align*}
Following similar arguments as in Section \ref{secMartRep}, $\wt X^n_U$ admits the following martingale representation
\begin{equation}
  \label{Eq:su1}
  \wt{X}^n_U(t)=\wt{X}^n_U(0)-\beta^n t-n^{1/4}\int^t_0\wt{Z}^n_U(s)ds+\wt{M}^n_{UA}(t)-\wt{M}^n_{US}(t),\qforallq t\ge0,
\end{equation}
where
\begin{align*}
  \wt{M}^n_{UA}(t)&=n^{-3/4}(A(n^{1/4}\lambda^nt)-n^{1/4}\lambda^nt),\qandq \\
  \wt{M}^n_{US}(t)&=n^{-3/4}\Big(S\big(\int^{n^{1/4}t}_0Z^n_U(s)ds\big)-\int^{n^{1/4}t}_0Z^n_U(s)ds\Big),\qforq t\ge0.
\end{align*}
It follows from the Poisson FCLT (e.g., Theorem 4.2 in \cite{pang2007martingale}) that
\begin{equation*}
  n^{-5/8}(A(n^{5/4}\cdot)-n^{5/4}\eta(\cdot))\Ra B(\cdot)\qandq n^{-5/8}(S(n^{5/4}\cdot)-n^{5/4}\eta(\cdot))\Ra B(\cdot),\qforallq t\ge0,
\end{equation*}
for a standard Brownian motion $B$, so that
\begin{equation*}
  \|n^{-3/4}(A-\eta)\|_{n^{5/4}t}\Ra 0\qandq \|n^{-3/4}(S-\eta)\|_{n^{5/4}t}\Ra0,\qforallq t\ge0,
\end{equation*}
Therefore
\begin{equation*}
  n^{1/4}\lambda^nt=O(n^{5/4})t\qandq\int^{n^{1/4}t}_0Z^n_U(s)ds\le n^{5/4}t,
\end{equation*}
imply that
\begin{equation*}
  (\wt{M}^n_{UA},\wt{M}^n_{US})\Ra (0\eta,0\eta)\qinq D^2,\qasq n\tinf.
\end{equation*}
Consider the process
\begin{equation*}
  \xi^n(t) := \wt{M}^n_{UA}(t)-\wt{M}^n_{US}(t)+n^{1/4}\int^t_0\wt{Z}^n_{U0}(s)ds, \quad t \ge 0.
\end{equation*}
Using similar arguments as in the proof of Proposition \ref{Prop:F}(a), one can show that
\begin{equation*}
  \label{EqZU0}
  n^{1/4}\int^\cdot_0\wt{Z}^n_{U0}(s)ds=o_p(1),
\end{equation*}
so that $\xi^n=o_P(1)$.
Using the equality $\wt{Z}^n_U=\wt{X}^n_U\wedge0-Z^n_{U0}$, \eqref{Eq:su1} becomes
\begin{align*}
  &\wt{X}^n_U(t)=\wt{X}^n_U(s)-\beta^n (t-s)-n^{1/4}\int^t_s\wt{X}^n_U(s)\wedge0ds+\xi^n(t)-\xi^n(s), \quad t \ge s\ge 0.
\end{align*}
Take $s := \sup\{u \in[0,t]: X^n_U(u)<0\}$, where, for $\emptyset$ denoting the empty set, $\sup \emptyset := 0$.
Then either $s=t$ or $X^n_U\ge0$ on $[s,t)$, implying that $n^{1/4}\int^t_s\wt{X}^n_U(s)\wedge0ds=0$.
Moreover, either $s=0$ or $X^n_U(s-)\le0$, where in the latter case $X^n_U(s)\le 1$ since $X^n_U$ only has either positive or negative jumps
of size $1$.
In particular, $X^n_U(s)\le X^n_U(0)\vee0+1$. Therefore,
\begin{equation*}
  \wt{X}^n_U(t)\le \wt{X}^n_U(0)\vee0+n^{-3/4}-(\beta^n\wedge 0) t+2\|\xi^n\|_t.
\end{equation*}
Notice that the right-hand side is strictly positive and  non-decreasing in $t$.Using $\wt{Q}^n_U=\wt{X}^n_U\vee0$,
\begin{equation*}
  \|\wt{Q}^n_U\|_t\le \wt{X}^n_U(0)\vee0+n^{-3/4}-(\beta^n\wedge 0) t+2\|\xi^n\|_t.
\end{equation*}
As $n\tinf$, $\wt{X}^n_U(0)\Ra X_0$ in $\RR$ and $\beta^n\ra\beta\in\RR$, so that the right-hand side is stochastically bounded in $\RR$ for any $t\ge0$,
so that $\{\wt Q^n : n \ge 1\}$ is stochastically bounded in $D$, as stated.
\end{proof}

\begin{proof}[Proof of Lemma \ref{Lem:SBw}]
We first observe that, for any $t > 0$,
\begin{equation}
  \label{Eq:SBw}
  Q^n(t)\ge A^n(t)-A^n(t-w^n(t))-R^n(t)+R^n(t-w^n(t)).
\end{equation}
To see this, note that the head-of-line customer arrived at time $t-w^n(t)$.
Thus, any waiting customer at time $t$ must either be an initial customer, or a customer that
arrived to the system during $[t-w^n(t),t)$. 
On the other hand, the number of those customers that arrived during $[t-w^n(t),t]$ and abandoned by time $t$ is clearly no larger than the total
 number of abandonments during $[t-w^n(t),t]$. Thus, we get \eqref{Eq:SBw}. 

Notice that
\begin{align*}
  n^{-3/4}A^n(n^{1/4}t)&=\wt{M}^n_A(t)+n^{-1/2}\lm^n t,\\
  n^{-3/4}A^n(n^{1/4}t-w^n(n^{1/4}t))&=\wt{M}^n_A(t-n^{-1/2}\wt{w}^n(t))+n^{-1/2}\lm^n(t-n^{-1/2}\wt{w}^n(t))\\
  n^{-3/4}R^n(n^{1/4}t)&=\wt{M}^n_R(t)+\theta\int^t_0n^{1/4}\wt{Q}^n(s)ds\\
  n^{-3/4}R^n(n^{1/4}t-w^n(n^{1/4}t))&=\wt{M}^n_R(t-n^{-1/2}\wt{w}^n(t))+\theta\int^{t-n^{-1/2}\wt{w}^n(t)}_0n^{1/4}\wt{Q}^n(s)ds,
\end{align*}
Plugging these equalities in \eqref{Eq:SBw} and using the LOF scaling gives
\begin{align*}
  \wt{Q}^n(t)\ge&n^{-3/4}\big(A^n(n^{1/4}t)-A^n(n^{1/4}t-w^n(n^{1/4}t))\\
  &-R^n(n^{1/4}t)+R^n(n^{1/4}t-w^n(n^{1/4}t))\big)\\
  = &n^{-1}\lambda^n\wt{w}^n(t)+\wt{M}^n_A(t)-\wt{M}^n_A(t-n^{-1/2}\wt{w}^n(t))\\
  &-\theta\int^t_{t-n^{-1/2}\wt{w}^n(t)}n^{1/4}\wt{Q}^n(s)ds-\wt{M}^n_R(t)+\wt{M}^n_R(t-n^{-1/2}\wt{w}^n(t))\\
  \ge &\wt{w}^n(t)(n^{-1}\lambda^n-n^{-1/4}\theta\|\wt{Q}^n\|_t)-2\|\wt{M}_A^n\|_t-2\|\wt{M}_R^n|_t.
\end{align*}
The statement follows from the facts that the processes $\wt{Q}^n$, $\wt{M}^n_A$, and $\wt{M}^n_R$ are all $o_P(1)$,
$n^{-1/4}\|\wt{Q}^n\|_t\Ra0$ in $\RR$, and $\lm^n/n \ra 1$.
\end{proof}

\begin{proof}[Proof of Lemma \ref{Lem:FMartingales}]
Fix $n \ge 1$. That $F^n(s,t)$ is integrable follows from $-\theta^{-1}\lambda^n\le F^n(s,t)\le A^n(s)$.
Let $\{\mathcal{G}^n_{s,t}:s\ge0\}$ be the natural filtration generated by $F^n(s,t)$, augmented by including all $P$-null sets.
Note that, for $0\le s_1<s_2\le t$,
\begin{align*}
  F^n(s_2,t)-F^n(s_1,t)=&\int^{s_2}_{s_1}1\{E^n_{A^n(s)}+T^n_{A^n(s)}>t\}dA^n(s) 
  -\theta^{-1}\lambda^n(e^{\theta(s_2-t)}-e^{\theta(s_1-t)}).
\end{align*}
and that the right-hand side is independent of $\mathcal{G}^n_{s_1,t}$. Hence,
\begin{align*}
  &E\Big[\int^{s_2}_{s_1}1\{E^n_{A^n(s)}+T^n_{A^n(s)}>t\}dA^n(s)\Big| \mathcal{G}^n_{s_1,t}\Big]\\
  =&E\Big[\int^{s_2}_{s_1}1\{E^n_{A^n(s)}+T^n_{A^n(s)}>t\}dA^n(s)\Big]\\
  =&E\Big[\int^{s_2}_{s_1}E\big[1\{E^n_{A^n(s)}+T^n_{A^n(s)}>t\}\big| E^n_{A^n(s)}\big]dA^n(s)\Big]
\end{align*}
Since $T^n_{A^n(s)}$---the patience of the last customer to arrive before time $s$---is exponentially distributed and is independent of the arrival time $E^n_{A^n(s)}$,
\begin{equation*}
  E\big[1\{E^n_{A^n(s)}+T^n_{A^n(s)}>t\}\big| E^n_{A^n(s)}\big]=\exp(-\theta(E^n_{A^n(s)}-t))
\end{equation*}
Therefore,
\begin{align*}
  &E\Big[\int^{s_2}_{s_1}E\big[1\{E^n_{A^n(s)}+T^n_{A^n(s)}>t\}\big| E^n_{A^n(s)}\big]dA^n(s)\Big] = E\Big[\int^{s_2}_{s_1}\exp(-\theta(E^n_{A^n(s)}-t))dA^n(s)\Big].
\end{align*}
Finally, $A^n$ is a simple counting process, and $E^n_{A^n(s)}=s$ when $dA^n(s) = 1$, so that
\begin{align*}
  E\Big[\int^{s_2}_{s_1}\exp(-\theta(E^n_{A^n(s)}-t))dA^n(s)\Big]
  &=E\Big[\int^{s_2}_{s_1}e^{-\theta(s-t)}dA^n(s)\Big]\\
  &=\theta^{-1}\lambda^n(e^{\theta(s_2-t)}-e^{\theta(s_1-t)}).
\end{align*}
Thus,
\begin{equation*}
  E[F^n(s_2,t)-F^n(s_1,t)|\mathcal{G}_{s_1,t}]=0,\qforq 0\le s_1\le s_2\le t,
\end{equation*}
implying that $\{F^n(s,t):s\in [0,t]\}$ is a martingale.

To prove that $\{e^{\theta t}\sup_{s\in[0,t]}|F^n(s,t)|:t\ge0\}$ is a submartingale,
let $\{\mathcal{G}^n_t:t\ge0\}$ be the right-continuous filtration generated by
\begin{equation*}
\left(A^n(s), 1\{T^n_k+E^n_k< t\}:s\le t, k\le A^n(t)\right),
\end{equation*}
and augmented by including all $P$-null sets.
It is easy to check that $F^n(s,s+t)\in\mathcal{G}^n_{s+t}$ and $\sup_{s\in[0,t]}|F(s,t)|\in\mathcal{G}^n_t$.
We will show that $\sup_{s\in[0,t]}|F(s,t)|$ is a $\mathcal{G}^n$-submartingale, and therefore also a submartingale with respected to the (augmented) natural filtration.

To this end, fix $0 \le t_1\le t_2$.
for $s_1\in[0,t_1]$ such that $E^n_{A^n(s_1)}+T^n_{A^n(s_1)}>t_1$. Due to the memoryless property of $T^n_{A^n(s_1)}$,
$E^n_{A^n(s_1)}+T^n_{A^n(s_1)}-t_1$ is also an exponential random variable with rate $\theta$, so that
\begin{equation*}
  P\left(E^n_{A^n(s_1)}+T^n_{A^n(s_1)}>t_2|E^n_{A^n(s_1)}+T^n_{A^n(s_1)}>t_1\right)=e^{\theta(t_1-t_2)}.
\end{equation*}
Trivially,
\begin{equation*}
  E^n_{A^n(s_1)}+T^n_{A^n(s_1)}\le t_2 \qifq ~ E^n_{A^n(s_1)}+T^n_{A^n(s_1)}\le t_1.
\end{equation*}
Now, $E^n_{A^n(s_1)}+T^n_{A^n(s_1)}>t_2$ is, condition on the event $\{E^n_{A^n(s_1)}+T^n_{A^n(s_1)}>t_1\}$, independent of $\mathcal{G}^n_{t_1}$.
Finally, $1\{E^n_{A^n(s_1)}+T^n_{A^n(s_1)}>t_1\}\in \mathcal{G}^n_{t_1}$, implying that
\begin{align*}
  E\left[1\{E^n_{A^n(s_1)}+T^n_{A^n(s_1)}>t_2\}|\mathcal{G}^n_{t_1}\right]
  =e^{\theta(t_1-t_2)}1\{E^n_{A^n(s_1)}+T^n_{A^n(s_1)}>t_1\}.
\end{align*}
Integrating both sides of the equality with respect to $s_1$ over $[0,s]$ and using the equality

\begin{equation*}
  e^{\theta(t_1-t_2)}\theta^{-1}\lambda^n(e^{-\theta (t_1-s)}-e^{-\theta t_1})=\theta^{-1}\lambda^n(e^{-\theta (t_2-s)}-e^{-\theta t_2}),
\end{equation*}
gives
\begin{equation*}
  E \left[F^n(s,t_2)|\mathcal{G}^n_{t_1}\right]=e^{\theta(t_1-t_2)}F^n(s,t_1),\qforallq 0\le s\le t_1\le t_2.
\end{equation*}
In particular, $\{e^{\theta(s+t)}F^n(s,s+t):t\ge0\}$ is a $\{\mathcal{G}^n_{s+t}:t\ge0\}$-martingale.

Now let $0\le t_1\le t_2$ and an arbitrary random time $S\le t_1$ such that $S\in\mathcal{G}^n_{t_1}$, we have
\begin{align*}
  E \Big[e^{\theta t_2}\sup_{s\in[0,t_2]}|F^n(s,t_2)|\Big| \mathcal{G}^n_{t_1}\Big]
  \ge E \Big[|e^{\theta t_2}F^n(S,t_2)|\Big| \mathcal{G}^n_{t_1}\Big].
\end{align*}
It follows from the facts that $S\in\mathcal{G}^n_{t_1}$, and that $\{e^{\theta(s+t)}F^n(s,s+t):t\ge0\}$ is a $\{\mathcal{G}^n_{s+t}:t\ge0\}$-martingale, that
\begin{align*}
 E \left[e^{\theta t_2}F^n(S,t_2)\big| \mathcal{G}^n_{t_1}\right]= e^{\theta t_1}F^n(S,t_1).
\end{align*}
By Jensen's inequality
\begin{align*}
  E \left[|e^{\theta t_2}F^n(S,t_2)|\big| \mathcal{G}^n_{t_1}\right]
  \ge e^{\theta t_1}|F^n(S,t_1)|,
\end{align*}
so that
\begin{equation} \label{ineqS}
  E \Big[e^{\theta t_2}\sup_{s\in[0,t_2]}|F^n(s,t_2)|\Big| \mathcal{G}^n_{t_1}\Big]
  \ge e^{\theta t_1}|F^n(S,t_1)|.
\end{equation}

Finally, $|F^n(s,t_1)|\in\mathcal{G}^n_{t_1}$ for any $s\in[0,t_1]$. Since $F^n(\cdot\wedge t_1,t_1)$ has right-continuous paths, for each $\ep>0$ we can choose $S_\ep\in\mathcal{G}^n_{t_1}$ such that
\begin{equation*}
  \sup_{s\in[0,t_1]}|F^n(s,t_1)|\le |F^n(S_\ep,t_1)|+\ep, w.p.1.
\end{equation*}
Taking $S=S_\ep$ in \eqref{ineqS} gives
\begin{equation*}
  E \Big[e^{\theta t_2}\sup_{s\in[0,t_2]}|F^n(s,t_2)|\:\Big| \mathcal{G}^n_{t_1}\Big]\ge e^{\theta t_1}\sup_{s\in[0,t_1]}|F^n(s,t_1)|-\ep,\mbox{ for all }t_1\ge0\qandq \ep>0.
\end{equation*}
The proof follows upon taking $\ep\ra0$.
\end{proof}

\begin{proof}[Proof of Lemma \ref{Lem:SBXZ}]
We first prove that $\{\wt{U}^n_1 : n \ge 1\}$ is stochastically bounded in $\D$.
Consider the LOF-scaled process in \eqref{Eq:U1},
\begin{equation*}
  \wt{U}^n_1(t)=\int^t_{\wt{T}^n_0\wedge t}1\{n^{-1/2}\theta\wt{w}^n(s-)+s> t\}d\wt{M}^n_S(s).
\end{equation*}
Notice that the integrand is non-negative and satisfies

\begin{equation}
  \label{Eq:Z1Indicator}
  1\{n^{-1/2}\theta\wt{w}^n(s-)+s> t\}\le 1\{s\ge t-n^{-1/2}\theta\|\wt{w}^n\|_t\},\mbox{ for all }0\le s\le t.
\end{equation}
Thus,
\begin{align*}
  |\wt{U}^n_1(t)| & \le \int^t_{0}1\{s\ge t-n^{-1/2}\theta\|\wt{w}^n\|_t\}d|\wt{M}^n_S(t)| \\
  &\le\int^t_{t-n^{-1/2}\theta\|\wt{w}^n\|_t}d|\wt{M}^n_S(t)|\\
  &\le 2\|\wt{M}^n_S\|_t+2\theta\|\wt{w}^n\|_t,
\end{align*}
where the last inequality follows from \eqref{Eq:SBU1}.
By Proposition \ref{Prop:F}(b) and Lemma \ref{Lem:SBw}, the right-hand side is stochastically bounded in $\D$,
implying that $\{\wt{U}^n_1 : n \ge 1\}$ is stochastically bounded in $\D$ as well.

To prove that $\{\wt{Z}^n_1-\wt{Z}^n_0 : n \ge 1\}$ is stochastically bounded in $D$, consider the LOF-scaled process in \eqref{Eq:ssc}
\begin{align*}
  \wt{Z}^n_1(t)-\wt{Z}^n_0(t)=&\int^t_{\wt{T}^n_0\wedge t}1\{n^{-1/2}\theta\wt{w}^n(s-)+s> t\}(n^{1/2}+n^{1/4}\wt{Z}^n_2(s))ds+\wt{U}_1^n(t).
\end{align*}
Again, using $\wt{Z}^n_2\le0$ w.p.1 and \eqref{Eq:Z1Indicator},
\begin{align*}
   |\wt{Z}^n_1(t)-\wt{Z}^n_0(t)| \le&n^{1/2}\int^t_01\{s\ge t-n^{-1/2}\theta\|\wt{w}^n\|_t\}ds+|\wt{U}_1^n(t)|\le\theta\|\wt{w}^n\|_t+\|\wt{U}_1\|_t.
\end{align*}
Since the right-hand side is non-decreasing in $t$,
Lemma \ref{Lem:SBw} and the stochastic boundedness of $\{\wt{U}^n : n \ge 1\}$ in $\D$
imply that $\{\wt{Z}^n_1-\wt{Z}^n_0 : n \ge 1\}$ is also stochastically bounded in $\D$.

To prove that $\{\wt{X}^n : n \ge 1\}$ is stochastically bounded in $\D$,
we use the same arguments as in the proof of Proposition \ref{Prop:Tightness} to obtain \eqref{EqXve}, and in particular,
\begin{equation*}
  \wt{X}^n \ge \xi^n + \varepsilon^n,
\end{equation*}
where $\varepsilon^n=o_P(1)$.
It follows from the stochastic boundedness of $\{\wt{w}^n : n \ge 1\}$ and $\{\wt{Z}^n_1-\wt{Z}^n_0 : n \ge 1\}$ in $D$,
that $\xi^n$ is also $O_P(1)$.
Therefore, $\xi^n+\varepsilon^n\le \wt{X}^n\le\wt{Q}^n$ implies that $\{\wt{X}^n\}$ is stochastically bounded in $\D$,
and thus $\wt{Z}^n=\wt{X}^n\wedge0$, implies that $\{\wt{Z}^n : n \ge 1\}$ is stochastically bounded in $D$.
Finally, $\wt{Z}^n_2+\wt{Z}^n_0=\wt{Z}^n-(\wt{Z}^n_1-\wt{Z}^n_0)$ implies that $\{\wt{Z}^n_2+\wt{Z}^n_0 : n \ge 1\}$ is stochastically bounded in $D$.
\end{proof}

 \bibliographystyle{imsart-number}
\bibliography{Diff_corr_cite}

\begin{thebibliography}{22}

\bibitem{billingsley2009convergence}
\begin{bbook}[author]
\bauthor{\bsnm{Billingsley},~\bfnm{Patrick}\binits{P.}}
(\byear{1968}).
\btitle{Convergence of probability measures}.
\bpublisher{Wiley}.
\end{bbook}
\endbibitem

\bibitem{chen2013fundamentals}
\begin{bbook}[author]
\bauthor{\bsnm{Chen},~\bfnm{Hong}\binits{H.}} \AND
  \bauthor{\bsnm{Yao},~\bfnm{David~D}\binits{D.~D.}}
(\byear{2013}).
\btitle{Fundamentals of queueing networks: Performance, asymptotics, and
  optimization}
\bvolume{46}.
\bpublisher{Springer Science \& Business Media}.
\end{bbook}
\endbibitem

\bibitem{dai1995positive}
\begin{barticle}[author]
\bauthor{\bsnm{Dai},~\bfnm{Jim~G}\binits{J.~G.}}
(\byear{1995}).
\btitle{On positive Harris recurrence of multiclass queueing networks: a
  unified approach via fluid limit models}.
\bjournal{The Annals of Applied Probability}
\bpages{49--77}.
\end{barticle}
\endbibitem

\bibitem{de2017worth}
\begin{barticle}[author]
\bauthor{\bsnm{De~Vries},~\bfnm{J}\binits{J.}},
  \bauthor{\bsnm{Roy},~\bfnm{D}\binits{D.}} \AND
  \bauthor{\bsnm{De~Koster},~\bfnm{R}\binits{R.}}
(\byear{2017}).
\btitle{Worth the wait? How waiting influences customer behavior and their
  inclination to return}.
\bjournal{Journal of Operations Management}.
\end{barticle}
\endbibitem

\bibitem{gamarnik2013steady}
\begin{barticle}[author]
\bauthor{\bsnm{Gamarnik},~\bfnm{David}\binits{D.}} \AND
  \bauthor{\bsnm{Goldberg},~\bfnm{David~A}\binits{D.~A.}}
(\byear{2013}).
\btitle{Steady-state {$GI/G/n$} queue in the Halfin--Whitt regime}.
\bjournal{The Annals of Applied Probability}
\bvolume{23}
\bpages{2382--2419}.
\end{barticle}
\endbibitem

\bibitem{garnett2002designing}
\begin{barticle}[author]
\bauthor{\bsnm{Garnett},~\bfnm{Ofer}\binits{O.}},
  \bauthor{\bsnm{Mandelbaum},~\bfnm{Avi}\binits{A.}} \AND
  \bauthor{\bsnm{Reiman},~\bfnm{M}\binits{M.}}
(\byear{2002}).
\btitle{Designing a call center with impatient customers}.
\bjournal{Manufacturing \& Service Operations Management}
\bvolume{4}
\bpages{208--227}.
\end{barticle}
\endbibitem

\bibitem{halfin1981heavy}
\begin{barticle}[author]
\bauthor{\bsnm{Halfin},~\bfnm{Shlomo}\binits{S.}} \AND
  \bauthor{\bsnm{Whitt},~\bfnm{Ward}\binits{W.}}
(\byear{1981}).
\btitle{Heavy-traffic limits for queues with many exponential servers}.
\bjournal{Operations research}
\bvolume{29}
\bpages{567--588}.
\end{barticle}
\endbibitem

\bibitem{kamae1977stochastic}
\begin{barticle}[author]
\bauthor{\bsnm{Kamae},~\bfnm{Teturo}\binits{T.}},
  \bauthor{\bsnm{Krengel},~\bfnm{Ulrich}\binits{U.}} \AND
  \bauthor{\bsnm{O'Brien},~\bfnm{George~L}\binits{G.~L.}}
(\byear{1977}).
\btitle{Stochastic inequalities on partially ordered spaces}.
\bjournal{The Annals of Probability}
\bpages{899--912}.
\end{barticle}
\endbibitem

\bibitem{kang2010fluid}
\begin{barticle}[author]
\bauthor{\bsnm{Kang},~\bfnm{Weining}\binits{W.}} \AND
  \bauthor{\bsnm{Ramanan},~\bfnm{Kavita}\binits{K.}}
(\byear{2010}).
\btitle{Fluid limits of many-server queues with reneging}.
\bjournal{The Annals of Applied Probability}
\bvolume{20}
\bpages{2204--2260}.
\end{barticle}
\endbibitem

\bibitem{kang2012asymptotic}
\begin{barticle}[author]
\bauthor{\bsnm{Kang},~\bfnm{Weining}\binits{W.}} \AND
  \bauthor{\bsnm{Ramanan},~\bfnm{Kavita}\binits{K.}}
(\byear{2012}).
\btitle{Asymptotic approximations for stationary distributions of many-server
  queues with abandonment}.
\bjournal{The Annals of Applied Probability}
\bvolume{22}
\bpages{477--521}.
\end{barticle}
\endbibitem

\bibitem{pang2007martingale}
\begin{barticle}[author]
\bauthor{\bsnm{Pang},~\bfnm{Guodong}\binits{G.}},
  \bauthor{\bsnm{Talreja},~\bfnm{Rishi}\binits{R.}} \AND
  \bauthor{\bsnm{Whitt},~\bfnm{Ward}\binits{W.}}
(\byear{2007}).
\btitle{Martingale proofs of many-server heavy-traffic limits for Markovian
  queues}.
\bjournal{Probability Surveys}
\bvolume{4}
\bpages{7}.
\end{barticle}
\endbibitem

\bibitem{puha2019scheduling}
\begin{bincollection}[author]
\bauthor{\bsnm{Puha},~\bfnm{Amber~L}\binits{A.~L.}} \AND
  \bauthor{\bsnm{Ward},~\bfnm{Amy~R}\binits{A.~R.}}
(\byear{2019}).
\btitle{Scheduling an overloaded multiclass many-server queue with impatient
  customers}.
In \bbooktitle{Operations Research \& Management Science in the Age of
  Analytics}
\bpages{189--217}.
\bpublisher{INFORMS}.
\end{bincollection}
\endbibitem

\bibitem{puhalskiiReiman2000multiclass}
\begin{barticle}[author]
\bauthor{\bsnm{Puhalskii},~\bfnm{Anatolii~A}\binits{A.~A.}} \AND
  \bauthor{\bsnm{Reiman},~\bfnm{Martin~I}\binits{M.~I.}}
(\byear{2000}).
\btitle{{The multiclass $GI/PH/N$ queue in the Halfin-Whitt regime}}.
\bjournal{Advances in Applied Probability}
\bvolume{32}
\bpages{564--595}.
\end{barticle}
\endbibitem

\bibitem{reed2009g}
\begin{barticle}[author]
\bauthor{\bsnm{Reed},~\bfnm{Josh}\binits{J.}}
(\byear{2009}).
\btitle{The {$G/GI/N$} queue in the {Halfin--Whitt} regime}.
\bjournal{The Annals of Applied Probability}
\bvolume{19}
\bpages{2211--2269}.
\end{barticle}
\endbibitem

\bibitem{reich2012offered}
\begin{bphdthesis}[author]
\bauthor{\bsnm{Reich},~\bfnm{Michael}\binits{M.}}
(\byear{2012}).
\btitle{The offered-load process: Modeling, inference and applications},
\btype{PhD thesis},
\bpublisher{Technion-Israel Institute of Technology}.
\end{bphdthesis}
\endbibitem

\bibitem{revuz2013continuous}
\begin{bbook}[author]
\bauthor{\bsnm{Revuz},~\bfnm{Daniel}\binits{D.}} \AND
  \bauthor{\bsnm{Yor},~\bfnm{Marc}\binits{M.}}
(\byear{2013}).
\btitle{Continuous martingales and Brownian motion}
\bvolume{293}.
\bpublisher{Springer Science \& Business Media}.
\end{bbook}
\endbibitem

\bibitem{sigman1993review}
\begin{barticle}[author]
\bauthor{\bsnm{Sigman},~\bfnm{Karl}\binits{K.}} \AND
  \bauthor{\bsnm{Wolff},~\bfnm{Ronald~W}\binits{R.~W.}}
(\byear{1993}).
\btitle{A review of regenerative processes}.
\bjournal{SIAM review}
\bvolume{35}
\bpages{269--288}.
\end{barticle}
\endbibitem

\bibitem{van2019economies}
\begin{barticle}[author]
\bauthor{\bparticle{van} \bsnm{Leeuwaarden},~\bfnm{Johan~SH}\binits{J.~S.}},
  \bauthor{\bsnm{Mathijsen},~\bfnm{Britt~WJ}\binits{B.~W.}} \AND
  \bauthor{\bsnm{Zwart},~\bfnm{Bert}\binits{B.}}
(\byear{2019}).
\btitle{Economies-of-Scale in Many-Server Queueing Systems: Tutorial and
  Partial Review of the {QED Halfin--Whitt} Heavy-Traffic Regime}.
\bjournal{SIAM Review}
\bvolume{61}
\bpages{403--440}.
\end{barticle}
\endbibitem

\bibitem{whitt1992understanding}
\begin{barticle}[author]
\bauthor{\bsnm{Whitt},~\bfnm{Ward}\binits{W.}}
(\byear{1992}).
\btitle{Understanding the efficiency of multi-server service systems}.
\bjournal{Management Science}
\bvolume{38}
\bpages{708--723}.
\end{barticle}
\endbibitem

\bibitem{whitt2002stochastic}
\begin{bbook}[author]
\bauthor{\bsnm{Whitt},~\bfnm{Ward}\binits{W.}}
(\byear{2002}).
\btitle{Stochastic-Process Limits: An Introduction to Stochastic-Process Limits
  and Their Application to Queues}.
\bpublisher{Springer}.
\end{bbook}
\endbibitem

\bibitem{wolff1989stochastic}
\begin{bbook}[author]
\bauthor{\bsnm{Wolff},~\bfnm{Ronald~W}\binits{R.~W.}}
(\byear{1989}).
\btitle{Stochastic modeling and the theory of queues}
\bvolume{14}.
\bpublisher{Prentice hall Englewood Cliffs, NJ}.
\end{bbook}
\endbibitem

\bibitem{wu2018service}
\begin{barticle}[author]
\bauthor{\bsnm{Wu},~\bfnm{Chenguang}\binits{C.}},
  \bauthor{\bsnm{Bassamboo},~\bfnm{Achal}\binits{A.}} \AND
  \bauthor{\bsnm{Perry},~\bfnm{Ohad}\binits{O.}}
(\byear{2018}).
\btitle{Service system with dependent service and patience times}.
\bjournal{Management Science}
\bvolume{65}
\bpages{1151--1172}.
\end{barticle}
\endbibitem

\end{thebibliography}

\end{document}